\providecommand{\tabularnewline}{\\}
\numberwithin{equation}{section}
\numberwithin{figure}{section}
\theoremstyle{plain}
\newtheorem{thm}{\protect\theoremname}[section]
  \theoremstyle{definition}
  \newtheorem{defn}[thm]{\protect\definitionname}
  \theoremstyle{remark}
  \newtheorem*{rem*}{\protect\remarkname}
  \theoremstyle{plain}
  \newtheorem{prop}[thm]{\protect\propositionname}
  \theoremstyle{remark}
  \newtheorem{rem}[thm]{\protect\remarkname}
  \theoremstyle{plain}
  \newtheorem{lem}[thm]{\protect\lemmaname}
\newcommand{\Rmnum}[1]{\expandafter\@slowromancap\romannumeral#1@}
\numberwithin{equation}{section}
\newcommand{\R}{\mathbb{R}}
\newcommand{\norm}[1]{\left\Vert#1\right\Vert}
\newcommand{\abs}[1]{\left\vert#1\right\vert}
\newcommand{\set}[1]{\left\{#1\right\}}
\newcommand{\defs}{:=}\newcommand{\sTo}{\rightarrow}
\DeclareMathOperator{\arcctg}{arccot}
\DeclareMathOperator{\di}{div}
\DeclareMathOperator{\tr}{Tr}
\newcommand{\me}{\mathrm{e}}
\newcommand{\dif}{\mathrm{d}}
\newcommand{\mi}{\mathrm{i}}
\DeclareSymbolFont{lettersA}{U}{pxmia}{m}{it}
\DeclareMathSymbol{\piup}{\mathord}{lettersA}{"19}
\newcommand{\Real}{\mathbb R}
\newcommand{\Comp}{\mathbb C}
\newcommand{\Int}{\mathbb Z}
\newcommand{\bc}{\mathbf c}
\newcommand{\bv}{\mathbf v}
\newcommand{\bx}{\mathbf x}
\newcommand{\by}{\mathbf y}
\newcommand{\kk}{\mathbf k}
\newcommand{\bnu}{\boldsymbol\nu}
\newcommand{\btau}{\boldsymbol\tau}
\newcommand{\bta}{\boldsymbol\eta}
\newcommand{\btheta}{\boldsymbol\theta}
\newcommand{\C}{\mathcal{C}}
\newcommand{\ms}{\mathcal{S}}
\newcommand{\fa}{\mathscr{A}}
\newcommand{\fb}{\mathscr{B}}
\newcommand{\fd}{\mathscr{D}}
\newcommand{\fe}{\mathscr{E}}
\newcommand{\ff}{\mathscr{F}}
\newcommand{\fj}{\mathscr{J}}
\newcommand{\fs}{\mathscr{S}}
\newcommand{\fp}{\mathscr{P}}
\newcommand{\fm}{\mathscr{M}}
  \providecommand{\definitionname}{Definition}
  \providecommand{\lemmaname}{Lemma}
  \providecommand{\propositionname}{Proposition}
  \providecommand{\remarkname}{Remark}
\providecommand{\theoremname}{Theorem}
\begin{document}

\title[Stability of Multidimensional Transonic Shocks]{Stability of Transonic Shocks in Steady Supersonic Flow past Multidimensional Wedges}
\author{Gui-Qiang Chen}
\address{Gui-Qiang G. Chen: Mathematical Institute,\
 University of Oxford, Oxford, OX2 6GG, UK;
 AMSS \& UCAS, Chinese Academy of Sciences, Beijing 100190, China}
\email{\texttt{chengq@maths.ox.ac.uk}}
\author{Beixiang Fang}
\address{Beixiang Fang: Department of Mathematics, and MOE-LSC,
Shanghai Jiao Tong University, Shanghai 200240, China}
\email{\texttt{bxfang@sjtu.edu.cn}}

\keywords{Stability, multidimensional, M-D, transonic shocks, steady, supersonic, wedge,
nonlinear approach, iteration scheme, {\it a priori} estimates, boundary value problems,
weighted norms, instability, asymptotic behavior}
\subjclass[2010]{35B35, 35B20,  35B40, 35B65, 35R35, 35M12, 35M10, 35J66, 76L05, 76N10}
\date{\today}

\begin{abstract}
We are concerned with the stability of multidimensional (M-D)
transonic shocks
in steady supersonic flow past multidimensional wedges.
One of our motivations is that the global stability issue for the M-D case
is much more sensitive than that for the 2-D case, which requires
more careful rigorous mathematical analysis.
In this paper, we develop a nonlinear approach and employ it to
establish the stability of weak shock solutions containing
a transonic shock-front for potential flow with respect to the
M-D perturbation of the wedge boundary in appropriate
function spaces.
To achieve this,
we first formulate the stability problem as a free boundary problem
for nonlinear elliptic equations.
Then we introduce the partial
hodograph transformation to reduce the free boundary problem into
a fixed boundary value problem near a background solution
with fully nonlinear boundary conditions
for second-order nonlinear elliptic equations
in an unbounded domain.
To solve this reduced problem,
we linearize the nonlinear problem on the background shock solution and
then, after solving this linearized elliptic problem,
develop a nonlinear iteration scheme that is proved to be contractive.
\end{abstract}

\maketitle
\tableofcontents{}

\section{Introduction}

We are concerned with the stability of multidimensional (M-D) transonic shocks in steady supersonic
flow past M-D wedges.
In this paper, we focus on the fluid flow governed by the potential flow equation:
\begin{equation}
\di\big(\rho(\abs{D\varphi}^{2})D\varphi\big)=0,  \label{eq:potential}
\end{equation}
where $\varphi=\varphi(\bx)$ is the potential of the velocity field in $\bx=(x_1,\cdots,x_n)\in \R^n$,
$\rho$
is the density with
\[
\rho(q^{2})=\Big(1-\frac{\gamma-1}{2}q^{2}\Big)^{\frac{1}{\gamma-1}}
\]
from Bernoulli's law for polytropic gases of adiabatic
exponent $\gamma>1$ by scaling,
and  $D:=(\partial_{x_1}, \cdots, \partial_{x_n})$ is the gradient in $\bx$.

Then the potential flow equation \eqref{eq:potential} can be written as
\[
\sum_{i,j=1}^{n}a_{ij}(D\varphi)\partial_{x_{i}x_{j}}\varphi=0,
\]
where
\[
a_{ij}(D\varphi)=\begin{cases}
c^{2}(\abs{D\varphi}^{2})-\abs{\partial_{x_{i}}\varphi}^{2}, & i=j,\\[1mm]
-\partial_{x_{i}}\varphi\partial_{x_{j}}\varphi, & i\not=j,
\end{cases}
\]
with $c(q^{2})=\big(1-\frac{\gamma-1}{2}q^{2}\big)^{1/2}$
being the sonic speed.
Denote $A(D\varphi):=\big[a_{ij}(D\varphi)\big]_{n\times n}$.

\begin{figure}
\centering
\includegraphics[width=300pt]{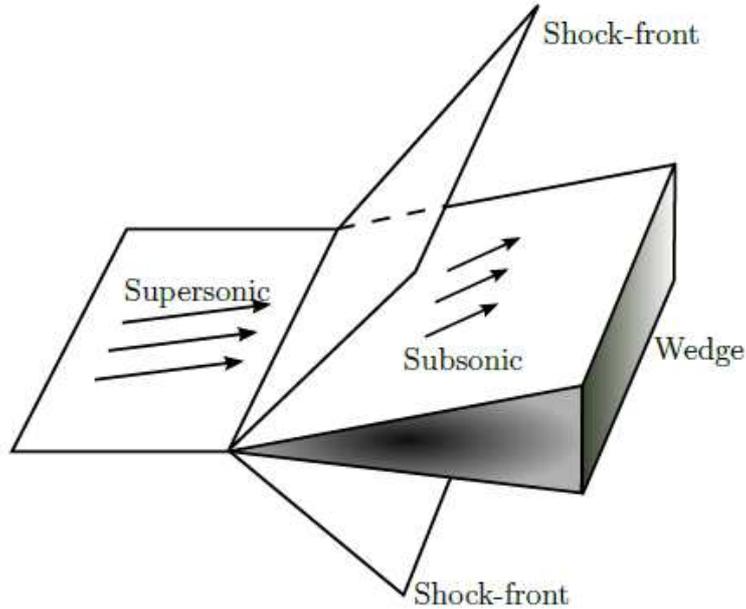}
\caption{The shock-front in steady supersonic flow past an M-D wedge\label{fig:3dwedge}}
\end{figure}

For an upstream supersonic flow past a straight wedge, a flat shock-front
is formed in the flow (see Fig. \ref{fig:3dwedge}). When the wedge angle is less than the
critical angle, the shock-front may be attached to the wedge edge.
There exist shock-fronts of two different types depending on the downstream flow
behind them: Transonic (supersonic-subsonic) shock-fronts
and supersonic-supersonic shock-fronts.
For a given two-dimensional (2-D) wedge which produces an attached
shock-front, there are two admissible shock solutions that satisfy
both the Rankine-Hugoniot conditions and the entropy condition. The
weaker one may be a supersonic-supersonic shock-front or a transonic
shock-front, while the stronger one is always a transonic shock-front
({\it cf.} \cite{CoF,KeyfitzWarnecke1991}).
It is analogous for the M-D case (see \S 2).
The non-uniqueness and related stability issues of such M-D steady shock waves
have been longstanding open problems in mathematical fluid mechanics,
which have attracted many mathematical scientists including
Busemann \cite{Busemann}, Meyer \cite{Meyer},
Prandtl \cite{Prandtl}, Courant-Friedrichs \cite{CoF}, and
von Neumann \cite{Neumann}; also
see \cite{BCF}, \cite{CCF}--\cite{ChenFang},
\cite{Dafermos,Fang,Gu,Li,Schaeffer,Serre,Zh2},
and the references cited therein.
In this paper, we are interested in the stability
problem of the M-D transonic shock-fronts,
behind which the flow is fully subsonic.

For the 2-D case,
local solutions involving a supersonic-supersonic shock
around the curved wedge vertex were first constructed by Gu
\cite{Gu}, Li \cite{Li}, Schaeffer \cite{Schaeffer}, and the
references cited therein. Global potential solutions are constructed
in \cite{Chen2,Chen3,CoF,Zh1,Zh2}
when the wedge has certain convexity, or the wedge is a small
perturbation of the straight-sided wedge with fast decay in the flow
direction.
In Chen-Zhang-Zhu \cite{ChenZhangZhu}, two-dimensional
steady supersonic flows governed by the full Euler equations past
Lipschitz wedges were systematically analyzed, and the existence and
stability of supersonic Euler flows in $BV$ were established via a modified
Glimm difference scheme ({\it cf.} \cite{Glimm}),
when the total variation of the tangent
angle function along the wedge boundary is suitably small.
Furthermore, the $L^1$--stability and uniqueness of entropy solutions in $BV$
containing the strong supersonic-supersonic shock
were established in Chen-Li \cite{ChenLi}.
The stability of transonic shocks under a perturbation of
the upstream flow, or a perturbation of wedge boundary, has been studied
in Chen-Fang \cite{ChenFang} for the potential flow and in Fang \cite{Fang}
for the Euler flow with a uniform Bernoulli constant.
In particular, the stability of
transonic shocks in the steady Euler flows with a uniform Bernoulli constant
was first established in the weighted Sobolev norms in Fang \cite{Fang},
while the downstream asymptotic decay rate of the shock speed
at infinity for the weak transonic shock solution for the full Euler equations
has been achieved in Chen-Chen-Feldman \cite{CCF}.
Also see Yin-Zhou \cite{YinZhou2009JDE} for the stability
of strong transonic shock solutions.

For the M-D case,
local solutions involving a supersonic-supersonic shock
past a 3-D wing were first constructed by Chen \cite{Sxchen}.
One of our motivations in this paper is that the global stability issue for the M-D case
is much more sensitive than that for the 2-D case,
which requires more careful rigorous mathematical analysis.
In this paper, we develop a nonlinear approach and employ it to
establish the stability of weak shock solutions containing
a transonic shock-front with respect to the
M-D perturbation of the wedge boundary in appropriate
function spaces.

To achieve this,
we first formulate the stability problem as a free boundary problem
for nonlinear elliptic equations.
Then we introduce the partial
hodograph transformation to reduce the free boundary problem into
a fixed boundary value problem near a background solution
with fully nonlinear boundary conditions
for second-order nonlinear elliptic equations
in an unbounded domain.
To solve this reduced problem,
we linearize the nonlinear problem on the background shock solution and
then, after solving this linearized elliptic problem, employ a nonlinear iteration scheme that
is proved to be contractive.
For this, the well-posedness theory for the corresponding linearized elliptic problem
also plays an important role in this stability analysis
of the transonic shocks.

The linearized problem here is a boundary value problem
of elliptic equations in an unbounded domain of a dihedral angle.
The singularities of the solution near the edge with the dihedral angle
and the asymptotic behavior at infinity are two important aspects
for such problems.
As far as we have known,
there have been plenty of literature for the elliptic problems in a domain with conical or/and edge singularities;
see \cite{BorsukKondratiev2006,Dauge1988,Eskin1985,Grisvard1985},
\cite{Komech1973}--\cite{KozlovMazyaRossmann1997},
\cite{MazyaPlamenevskij1971}--\cite{MazyaRossmann2010},
\cite{Plamenevskij1997,Reisman1981_elliptic}, and the references cited therein.
In this paper, the well-posedness of the linearized problem can be obtained
by directly applying the results established by Maz'ya, Plamenevskij, and others
in \cite{KozlovMazyaRossmann1997}, \cite{MazyaPlamenevskij1971}--\cite{MazyaRossmann2010},
and \cite{Reisman1981_elliptic}.
According to the theory, the linearized elliptic problem can be well-posed in
weighted Sobolev spaces or weighted H\"{o}lder spaces,
whose weights describe the singularity of the solution near the edge and the asymptotic behavior
at infinity simultaneously.
It is shown that the admissible weights are essentially associated with
the eigenvalues of the deduced elliptic boundary value problem in an angular domain;
see \cite{KozlovMazyaRossmann1997,MazyaPlamenevskij1978-MN,MazyaRossmann2010}
for the rigorous definitions and related details.
We calculate an example of the eigenvalues for oblique derivative boundary value problems
of the Poisson equation in an angular domain in the appendix,
which is used in this paper.
It turns out that, for these problems, there are countable many eigenvalues and the admissible weights
are separated into countable many intervals according to these eigenvalues.
Then there arises an interesting and important difference
between an M-D ($n\geq3 $) dihedral-angled wedge, whose edge is a straight line or a hyperplane,
and a 2-D one whose edge shrinks to a point.
Roughly speaking, only one interval of admissible weights was proved to
be valid in \cite{MazyaPlamenevskij1971,Reisman1981_elliptic} for the M-D edge singularity
of the domain for the linear theory,
while there are countable many intervals of admissible weights that are valid for
the 2-D corner singularity; {\it cf.} \cite{Kondratev1967,KozlovMazyaRossmann1997,MazyaPlamenevskij1978-MN}.
That is, there are much more admissible weights that are valid for the 2-D case than for the M-D case.
It is this difference that will lead us to different stability consequences for the M-D case from the 2-D case:
The M-D stability result is established in this paper only for the weak transonic shocks,
while the 2-D stability results can be established for both the weak and strong ones.

For our stability problem, the distribution of eigenvalues for the linearized problem
is closely related to the angle between the velocity vector behind the shock-front
and the outer normal of the shock polar in the $(u,v)$--plane,
whose tangent value, according to the shock polar, is positive for weak transonic shocks,
while negative for strong ones; see \S 6 and \S 8.
This fact will result in different stability consequences for weak transonic shocks
and strong ones for the M-D case.
In fact, the only valid admissible interval of weights for the weak transonic shocks
satisfies the property that the solution is physically reasonable,
that is, the velocity should be bounded;
while the weights for the strong transonic shocks fail to satisfy this property.
Therefore, for the M-D transonic shocks, the stability of weak ones can be established
in this paper, while the stability of the strong ones cannot
be established via this analysis regime; see \S 6 for more details.
However, for the 2-D transonic shocks, since there are countable many valid admissible
intervals of weights, we can choose one of them, accordingly for weak and strong ones,
such that the solution is physically reasonable;
see \S 8 for more details.

Therefore, it is also interesting to question whether the stability
of the strong transonic shocks for the M-D case is still valid.
For the stability of strong transonic shocks,
the nature of the boundary condition is significantly different from the
weak transonic shock case. Such a difference
may affect the regularity of solutions,
as well as the asymptotic behavior, in general. It requires
further understanding of some special features of the problem
along the wedge edge to ensure that there exists a smooth solution.
A different approach may be required to handle this case,
which is currently under investigation.
In this regard, we notice that an instability
result has been observed recently in Li-Xu-Yin \cite{LXY}.

The organization of this paper is as follows.
In \S 2, we establish
the shock polar for the M-D shock-fronts for the potential
flow equation \eqref{eq:potential}.
In \S 3, we formulate the stability problem as a free boundary problem
and describe our main theorem.
In \S 4, we introduce the weighted norms applied in this paper
measuring the perturbations and provide the well-established theory
for boundary value problems of the Poisson equation in a dihedral angle.
In \S 5, we introduce the partial
hodograph transformation to reduce the free boundary problem into
a fixed boundary value problem and describe the theorem which will
be proved in \S 6--7.
In \S 6, we analyze the regularity of solutions near
the wedge edge by linearizing the
nonlinear stability problem.
In \S 7, we develop an iteration
scheme and establish its convergence, which
completes the proof of
our main theorem.
In \S 8, different from the M-D case, we show that all the weak
and strong transonic shock solutions are conditionally stable
in the 2-D case,
for which the
strong one has even better regularity near the wedge vertex.
For self-containedness, in the appendix,
we give a sketch of the proof of Theorem 4.4.

\section{The Shock Polar for Multidimensional Shock-Fronts}

Assume that the velocity of the uniform supersonic flow ahead of a shock-front
$\ms$ is $\bv^{-}=(q_{0},0,0,\cdots,0)^{\top}$, and the
velocity of the uniform flow behind $\ms$ is $\bv=(v_{1},v_{2},\bv')^{\top}$
with $\bv'=(v_{3},\cdots, v_{n})$. Then the corresponding
potential functions are
\[
\varphi^{-}(\bx)=q_{0}x_{1},\qquad\varphi^{+}(\bx)=v_{1}x_{1}+v_{2}x_{2}+ \bv'\cdot \bx',
\]
respectively, where $\bx=(x_{1}, x_2, \cdots,x_{n})^{\top}=(x_1, x_2, \bx')^\top$ with $\bx'=(x_3, \cdots, x_n)^\top$. Let
\[
\varphi(\bx)=\varphi^{-}(\bx)-\varphi^{+}(\bx)=(q_{0}-v_{1})x_{1}-v_{2}x_{2}-\bv'\cdot \bx'.
\]
Then the Rankine-Hugoniot conditions on $\ms$ can be written as
\begin{align}
 & D\varphi\cdot\big(\rho(\abs{D\varphi^{+}}^{2})D\varphi^{+}-\rho(\abs{D\varphi^{-}}^{2})D\varphi^{-}\big)=0,
  \label{eq:bdry_con_shock_2-1}\\
 & \varphi(\bx)=0.\label{eq:bdry_con_shock_1-1}
\end{align}
Condition \eqref{eq:bdry_con_shock_2-1} indicates the conservation
of mass across the shock-front, and condition \eqref{eq:bdry_con_shock_1-1}
implies that the tangential components of the velocity are continuous
across the shock-front.

Now we determine the position of the shock-front and
velocity $\bv$ behind it, for the given wedge and the uniform incoming
supersonic flow $\bv^{-}$.
To this end, the rigidity assumption is
imposed on the flow along the wedge surface:
\begin{equation}
\bv\cdot\bnu=0,\label{eq:rigidity_condition}
\end{equation}
where\textcolor{black}{{} $\bnu$ }is the unit normal of the wedge.

Condition \eqref{eq:bdry_con_shock_2-1}
can be rewritten as
\begin{equation}
\left(\rho+\rho^{-}\right)q_{0}v_{1}-\rho q^{2}-\rho^{-}q_{0}^{2}=0,\label{eq:shock_polar_one}
\end{equation}
where $\rho^{-}=\rho(q_{0}^{2})$, $q=\abs{\bv}$, and
$\rho=\rho\left(q^{2}\right)$.
Then the admissible solution
$\bv$ to equation \eqref{eq:shock_polar_one}
can be described
by a shock balloon rotating the 2-D shock polar around the $v_{1}$--axis;
see Fig. \ref{fig:shock_polar}(a).

\begin{figure}
\begin{tabular}{|>{\centering}p{150pt}|>{\centering}p{150pt}|}
\hline
\centering
\includegraphics[width=150pt]{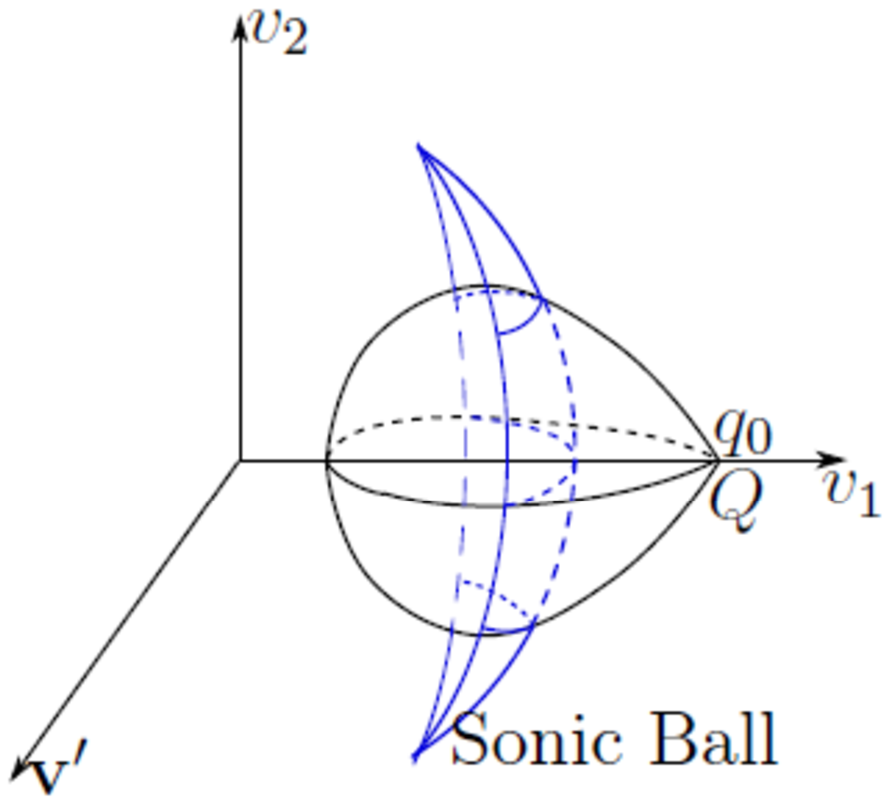}
& \centering
\includegraphics[width=150pt]{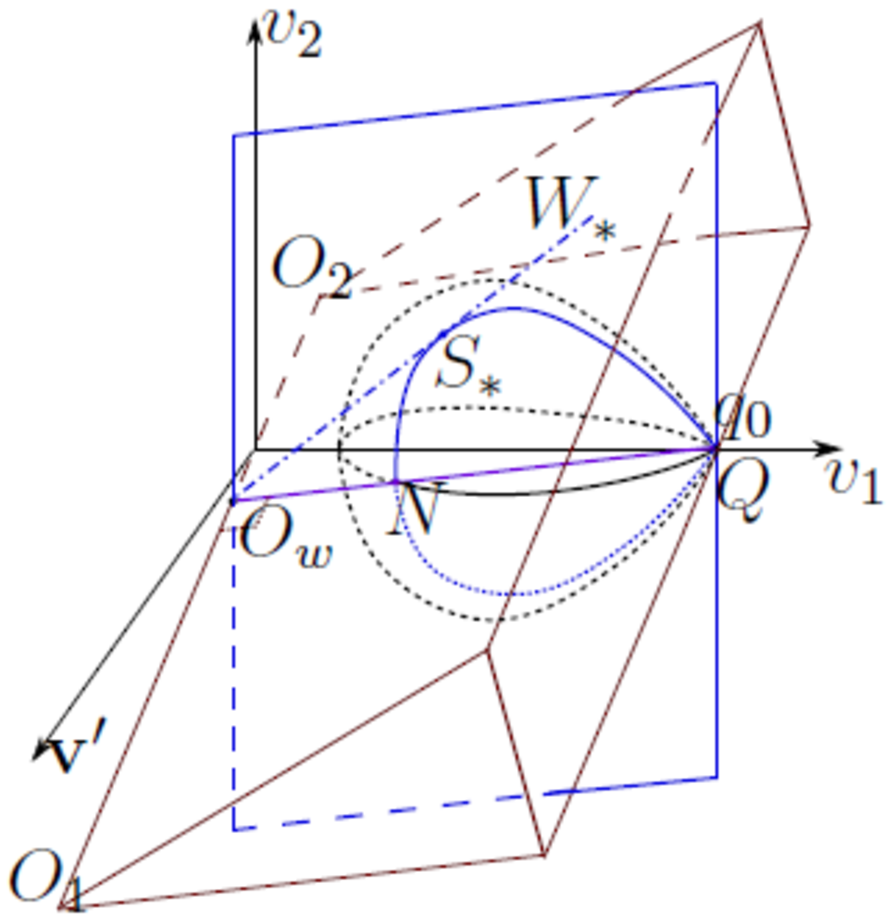}
\tabularnewline
(a) The shock balloon for admissible points satisfying \eqref{eq:shock_polar_one}
& (b) The shock polar for admissible points satisfying \eqref{eq:shock_polar_one}
  and \eqref{eq:shock_polar_two}
\tabularnewline
\hline
\centering
\includegraphics[width=150pt]{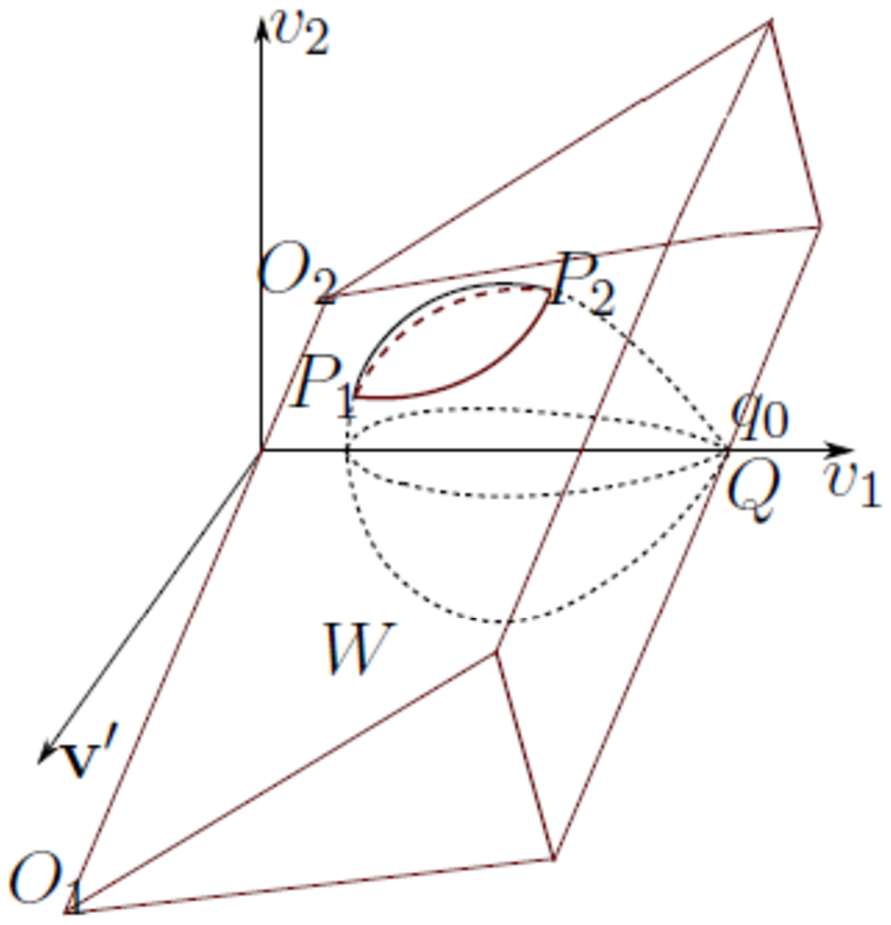}
& \centering
\includegraphics[width=150pt]{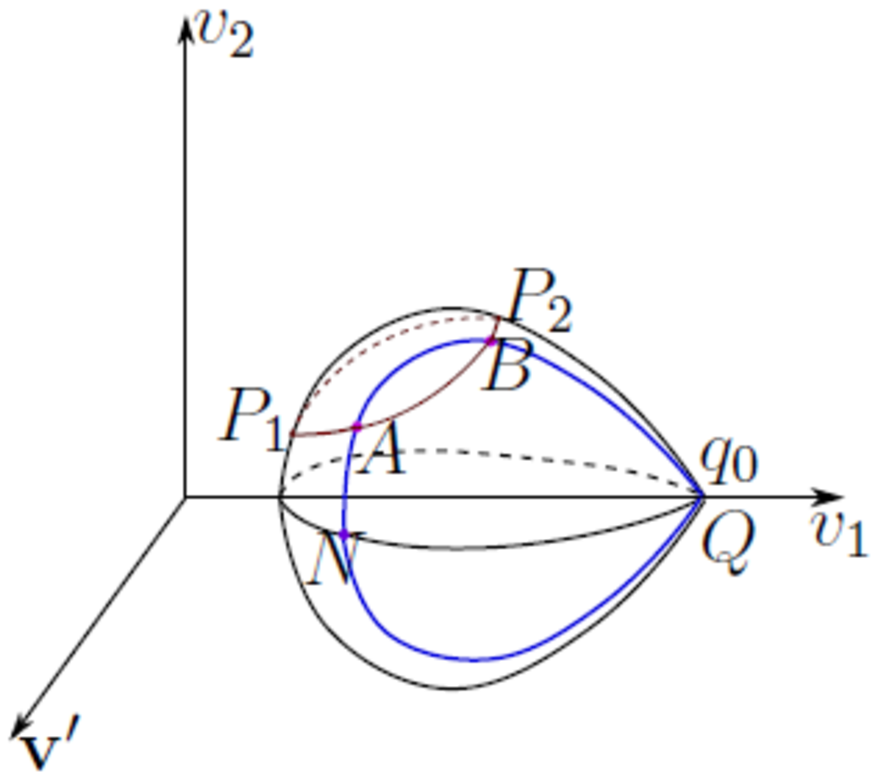}
\tabularnewline
(c) The velocity loop for admissible points satisfying \eqref{eq:shock_polar_one} and the rigidity condition on the wedge.
& (d) The admissible shock solutions for supersonic flow around an M-D wedge.\tabularnewline
\hline
\end{tabular}
\caption{The shock polar and shock solutions for a given M-D wedge\label{fig:shock_polar}}
\end{figure}

Since the shock-front is attached to the wedge edge, which
is assumed to be the hyperplane spanned by the unit
vectors
\textcolor{black}{$\btau_{j}=(\tau_{1},0,\cdots,\tau_{j},\cdots,0)^{\top}$},
$j=3,\cdots,n$, with $\tau_{j}$ the $j$--th component,
we can differentiate condition \eqref{eq:bdry_con_shock_1-1}
along the edge to obtain
\begin{equation}
q_{0}\tau_{1}=v_{1}\tau_{1}+v_{j}\tau_{j},\qquad j=3,\cdots,n,
\label{eq:shock_polar_two}
\end{equation}
which implies that $\bv^{-}-\bv$ is orthogonal to \textcolor{black}{$\btau_{j}$}.
Thus, the M-D shock polar determined by the Rankine-Hugoniot conditions
\eqref{eq:bdry_con_shock_2-1}--\eqref{eq:bdry_con_shock_1-1}
is the intersection curve of the shock balloon determined
by \eqref{eq:shock_polar_one}
and the hyperplanes in \eqref{eq:shock_polar_two},
which is similar to the 2-D shock polar,
when such an intersection curve exists;
see loop $QS_{*}N$ in Fig. \ref{fig:shock_polar}(b).

Finally, for a given wedge,
the rigidity assumption \eqref{eq:rigidity_condition}
yields that $\bv$ should also be tangential to the wedge plane,
plane $O_{1}O_{2}W$,
which intersects with the shock balloon
at loop $P_{1}P_{2}$ when the dihedral wedge angle is less than
the critical value; see Fig. \ref{fig:shock_polar}(c).
Therefore,
the velocity behind the shock-front must be determined
by the intersection points
$A$ and $B$ of loop $P_{1}P_{2}$
and the shock polar $QS_{*}N$; see Fig. \ref{fig:shock_polar}(d).
Each intersection point represents
a shock solution, which is called the background solution, to our
problem for supersonic potential flow past a straight M-D wedge.
Notice that, as the wedge angle increases to the critical value,
the intersection points $A$ and $B$ coincide with $S_{*}$;
and when it is larger than the critical value,
there is no intersection point, which implies that the shock-front cannot attach the wedge edge.

Both shock solutions determined by $A$ and $B$ satisfy
the entropy condition,
and the shock strength of the solution represented by $A$ is stronger
than $B$.
In addition, $A$ must correspond to a transonic shock solution, while
$B$ may correspond to a transonic or supersonic shock solution.
The critical shock solution
$S_{*}$ must be transonic.
We are interested in the stability of
transonic shocks, including the weak and strong transonic shocks
on the shock polar.

\section{Formulation of the Stability Problem and Main Theorem}

In this section, we formulate the stability problem as a free boundary problem
for nonlinear elliptic equations and describe our main theorem for the stability
results.

We first reformulate the coordinate system, for simplicity of presentation
of the computation.
Fix the $x_{1}$--axis to be in the surface
of the straight wedge and perpendicular to the wedge edge,
the $x_{2}$--axis to be perpendicular to the wedge surface,
and the $x_{3}$--axis to
be parallel with the component of the velocity vector behind the
shock-front on the $(n-2)$-D hyperplane $\set{x_{1}=0,x_{2}=0}$;
see Fig. \ref{fig:nonorthorgnal_shock}.

\begin{figure}
\centering
\includegraphics[width=210pt]{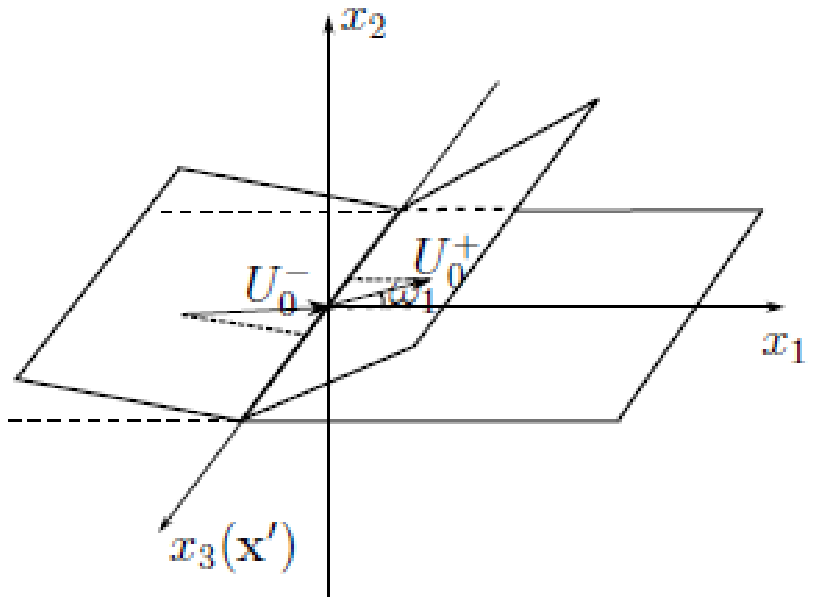}
\includegraphics[width=130pt]{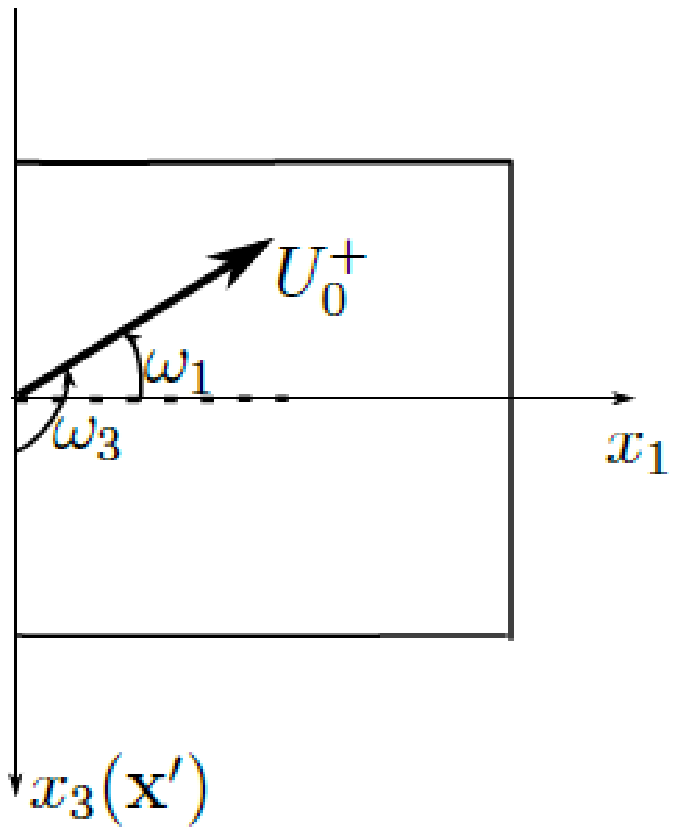}
\caption{The flow field under the reformulated coordinate system\label{fig:nonorthorgnal_shock}}
\end{figure}

Assume that the wedge angle
is $\alpha_{\rm w}$. Then, for the background shock solution,
the velocity of the incoming flow ahead of
the shock-front is
$$
U_{0}^{-}=(q_{0}^{-}\cos\alpha_{\rm w},-q_{0}^{-}\sin\alpha_{\rm w}, U_{03}^-, 0,\cdots,0)^{\top}
$$
with $|U_0^-| =\sqrt{(q_0^-)^2+(U_{03}^{-})^2}$,
and the velocity behind the shock-front is
$$
U_{0}^{+}=q_{0}^{+}\left(\cos\omega_{1},0,\cos\omega_{3},0,\cdots,0\right)^{\top},
$$
where $\omega_{j}$ is the angle between $U_{0}^{+}$ and the $x_{j}$--axis
for $j=1,3$; see Fig. \ref{fig:nonorthorgnal_shock}.

By the Rankine-Hugoniot conditions, we have
\begin{eqnarray*}
&&U_{03}^- = q_{0}^{+}\cos\omega_{3},\\
&&\cos^{2}\omega_{1}+\cos^{2}\omega_{3}= 1.
\end{eqnarray*}
Thus, the corresponding potential functions are
\begin{eqnarray}
&&\varphi_{0}^{-}(\bx)
= x_{1}q_{0}^{-}\cos\alpha_{\rm w}-x_{2}q_{0}^{-}\sin\alpha_{\rm w}+x_{3}U_{03}^-,\label{eq:bg_shock_super}\\
&&\varphi_{0}^{+}(\bx)
= x_{1}q_{0}^{+}\cos\omega_{1}+x_{3}q_{0}^{+}\cos\omega_{3},\label{eq:bg_shock_sub}
\end{eqnarray}
and the location of the shock-front $\ms_{0}$ is determined by
\begin{equation}
\varphi_{0}(\bx)\defs\varphi_{0}^{-}(\bx)-\varphi_{0}^{+}(\bx)=0,\label{eq:bg_shock_front}
\end{equation}
that is,
\[
x_{1}\left(q_{0}^{-}\cos\alpha_{\rm w}-q_{0}^{+}\cos\omega_{1}\right)-x_{2}q_{0}^{-}\sin\alpha_{\rm w}=0.
\]

\begin{figure}
\centering
\includegraphics[width=300pt]{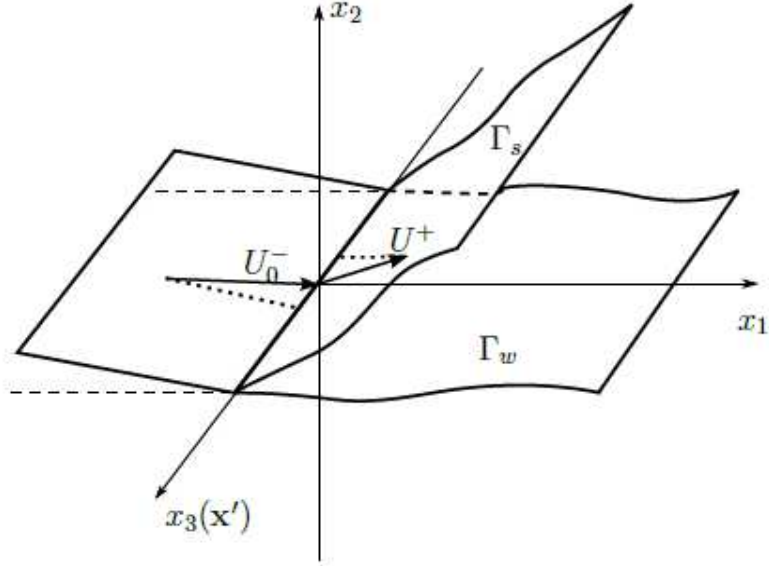}
\caption{The perturbed wedge and resulting perturbed shock-front
\label{fig:perturbed_shock}}
\end{figure}

Now assume that the wedge surface is perturbed by the perturbed
surface:
\[
\Gamma_{\rm w}\defs\set{\bx\in\Real^{n}:\ x_{2}=\varphi_{\rm w}(x_{1},\bx'),\ x_{1}>\varphi_{\rm w}^{\rm e}(\bx'),\ \bx'\in\Real^{n-2}};
\]
see Fig. \ref{fig:perturbed_shock}. We investigate whether
the background transonic shock solution \eqref{eq:bg_shock_super}--\eqref{eq:bg_shock_sub}
with the position of shock-front determined by \eqref{eq:bg_shock_front}
is stable.

If the shock solution is stable, then $\Gamma_{\rm s}$  is denoted as the shock-front,
$\fd^{-}$ as the supersonic flow field ahead $\Gamma_{\rm s}$, and $\fd^{+}$
the subsonic flow field between $\Gamma_{\rm s}$ and $\Gamma_{\rm w}$; see Fig. \ref{fig:perturbed_shock}.
Let $\varphi^{\pm}(\bx)$ be the potential functions
of the perturbed steady flow in $\fd^{\pm}$, respectively.
Then we have
\begin{equation}\label{eq:potential_eqs}
\sum_{i,j=1}^{n}a_{ij}(D\varphi^{\pm})\partial_{x_{i}x_{j}}\varphi^{\pm}=0
\qquad \text{ in }\fd^{\pm}.
\end{equation}
Let
\begin{equation}
\varphi(\bx)\defs\varphi^{-}(\bx)-\varphi^{+}(\bx).
\label{eq:potential_reduced}
\end{equation}
Then $\varphi(\bx)$ in $\fd^{+}$ is governed by
\begin{equation}
\sum_{i,j=1}^{n}a_{ij}(D\varphi-D\varphi^{-})\partial_{x_{i}x_{j}}\varphi
=\sum_{i,j=1}^{n}a_{ij}(D\varphi-D\varphi^{-})\partial_{x_{i}x_{j}}\varphi^{-}
\qquad \text{ in }\fd^{+}.\label{eq:potential_eq_reduced}
\end{equation}

We assume that the fluid satisfies the rigidity condition on the wedge boundary
$\Gamma_{\rm w}$:
\begin{equation}
H_{\rm w}(D\varphi;D\varphi_{\rm w})=0 \qquad \text{on }\Gamma_{\rm w},
\label{eq:bdry_con_wedge}
\end{equation}
where, with $\left(D_{\bx'}\varphi_{\rm w}\right)^{\top}=(\partial_{x_{3}}\varphi_{\rm w},\cdots,\partial_{x_{n}}\varphi_{\rm w})$,
\[
H_{\rm w}(D\varphi; D\varphi_{\rm w})\defs (-\partial_{x_{1}}\varphi_{\rm w},1,-(D_{\bx'}\varphi_{\rm w})^{\top})^{\top}\cdot (D\varphi-D\varphi^{-}).
\]

On the shock front $\Gamma_{\rm s}$, the Rankine-Hugoniot conditions
hold:
\begin{align}
 & \varphi(\bx)=0 & \text{ on } & \Gamma_{\rm s},\label{eq:bdry_con_shock_1}\\
 & H_{\rm s}(D\varphi;D\varphi^{-})=0 & \text{ on } & \Gamma_{\rm s},\label{eq:bdry_con_shock_2}
\end{align}
where
\[
H_{\rm s}(D\varphi;D\varphi^{-})\defs D\varphi\cdot\big(\rho(\abs{D\varphi^{-}-D\varphi}^{2})(D\varphi^{-}-D\varphi)-\rho(\abs{D\varphi^{-}}^{2})D\varphi^{-}\big).
\]

\smallskip
Then the stability problem can be formulated as

\smallskip
{\bf Problem 3.1 (Free boundary problem)}:
For the given perturbation of the wedge surface $\varphi_{\rm w}\left(x_{1},\bx'\right)$
and the given incoming supersonic flow $\varphi^-(\bx):=\varphi^{-}_0(\bx)$,
determine $\varphi(\bx)$ and the free boundary $\Gamma_s$ of domain $\fd^+$ such that
\eqref{eq:potential_eq_reduced}--\eqref{eq:bdry_con_shock_2} hold.
Moreover, $\varphi^-(\bx)-\varphi(\bx)$ describes a subsonic flow behind the shock-front.

\smallskip
The main purpose of this paper is to establish the following stability theorem for the weak
transonic shock solutions:

\begin{thm}
\label{thm:main}
Let $\left(\varphi_{0}^{-}\left(\bx\right);\varphi_{0}^{+}\left(\bx\right)\right)$
be the weak transonic shock solution that is represented
by $B$ on the shock polar{\rm ;} see Fig. {\rm \ref{fig:shock_polar}}.
If  the wedge edge is not perturbed, that is,
\begin{equation}\label{eq:edge_position}
\varphi_{\rm w}^{\rm e}(\bx')\equiv 0, \,\,\quad  \varphi_{\rm w}(0,\bx')\equiv0 \qquad\text{ for all }\bx'\in\Real^{n-2},
\end{equation}
and the perturbation $\varphi_{\rm w}\left(x_{1},\bx'\right)$ of the
wedge surface is sufficiently small,
then there exists a unique $\varphi^{+}\left(\bx\right)$,
which is also a small perturbation of $\varphi_{0}^{+}(\bx)$,
such that $\left(\varphi_{0}^{-}(\bx);\varphi^{+}(\bx)\right)$
solves Problem {\rm 3.1}, i.e., the free boundary problem
\eqref{eq:potential_eq_reduced}--\eqref{eq:bdry_con_shock_2},
with the perturbed shock-front $\Gamma_s$ determined by
\[
\varphi\left(\bx\right):=\varphi_{0}^{-}\left(\bx\right)-\varphi^{+}\left(\bx\right)=0.
\]
This indicates that the weak transonic shock solution is conditionally
stable.
\end{thm}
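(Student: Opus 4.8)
The plan is to follow the scheme announced in the introduction: flatten the free boundary by a partial hodograph transformation, linearize on the background solution represented by $B$, invoke the linear theory of Theorem~4.4 in weighted spaces, and close a contraction argument exploiting the smallness of the wedge perturbation $\varphi_{\rm w}$. For the background solution the flow behind the shock is strictly subsonic, so the operator in \eqref{eq:potential_eq_reduced} is uniformly elliptic in $\fd^{+}$ for $\varphi^{+}$ near $\varphi_{0}^{+}$; moreover the shock-front $\Gamma_{\rm s}=\{\varphi=0\}$ is a level set of $\varphi=\varphi_{0}^{-}-\varphi^{+}$, and $\partial_{x_{1}}\varphi_{0}=q_{0}^{-}\cos\alpha_{\rm w}-q_{0}^{+}\cos\omega_{1}\neq 0$ (otherwise the shock-front would coincide with the wedge surface). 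Hence the first step is a partial hodograph transformation exchanging the roles of $x_{1}$ and $\varphi$, writing $x_{1}=\psi(\varphi,x_{2},\bx')$, together with a flattening of the perturbed wedge $\Gamma_{\rm w}$ --- legitimate because the hypothesis \eqref{eq:edge_position} keeps the wedge edge and the dihedral angle unchanged. In the new variables the shock and the wedge become two fixed hyperplanes bounding an unbounded dihedral-angled domain, \eqref{eq:potential_eq_reduced} becomes a second-order quasilinear uniformly elliptic equation for $\psi$, and \eqref{eq:bdry_con_wedge} and \eqref{eq:bdry_con_shock_2} become fully nonlinear boundary conditions; this is precisely the reduced fixed boundary value problem of \S 5, whose solvability is the content of the theorem stated there and proved in \S 6--7.

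Next, writing $\psi=\psi_{0}+\delta\psi$ with $\psi_{0}$ the background profile, I would linearize the reduced equation and its two boundary conditions at $\psi_{0}$. This yields a linear uniformly elliptic equation for $\delta\psi$ with two oblique-derivative-type conditions on the faces of the dihedral angle. By the shock-polar geometry of \S 2 (see also \S 6), the obliqueness on the shock face is governed by the angle between the velocity behind the shock-front and the outer normal of the shock polar in the $(u,v)$--plane, whose tangent is \emph{positive} for the weak shock $B$; this sign is exactly what places the unique admissible interval of weights for the M-D edge into the physically reasonable regime, in which the velocity stays bounded. Theorem~4.4 then furnishes, for the right-hand data generated by $\varphi_{\rm w}$, a unique $\delta\psi$ in a weighted Sobolev (or weighted H\"older) space whose weight simultaneously controls the edge singularity and the decay at infinity, together with the linear estimate $\|\delta\psi\|\le C\|\varphi_{\rm w}\|$ in the corresponding weighted norms.

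Finally, freezing the nonlinear coefficients and boundary terms at a given iterate $\delta\psi^{(k)}$ and solving the linear problem of the previous step defines an iteration map $T:\delta\psi^{(k)}\mapsto\delta\psi^{(k+1)}$. On a closed ball of radius of order $\|\varphi_{\rm w}\|$ in the weighted space I would show that $T$ maps the ball into itself and is a contraction: the nonlinear remainders --- which involve $\delta\psi$ and its derivatives both through the coefficients and, via the hodograph map, through the geometry --- are quadratic, and the weighted norm is tailored so that such products are absorbed by the radius, using that the chosen weight lies in the single admissible M-D interval together with the compatibility at the edge forced by \eqref{eq:edge_position} to retain enough regularity up to the edge. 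The unique fixed point gives $\psi$; inverting the hodograph transformation recovers $\varphi^{+}$ and $\Gamma_{\rm s}$, with $\varphi^{+}$ close to $\varphi_{0}^{+}$ and hence subsonic, so that $(\varphi_{0}^{-};\varphi^{+})$ solves Problem~3.1, and uniqueness within the perturbative class follows from the contraction.

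I expect the main obstacle to be the interplay, in the last two steps, between the two weights --- edge singularity versus decay at infinity --- and the quadratic structure of the error terms. One must verify that the fully nonlinear boundary conditions linearize exactly into the oblique-derivative class covered by Theorem~4.4 with the obliqueness sign dictated by the weak shock polar, and then that the nonlinear terms remain controllable in the one weighted norm forced by the unique admissible M-D weight interval, with just enough regularity at the edge to close the iteration. This delicate balance is precisely what restricts the present argument to the weak transonic shocks in the M-D case.
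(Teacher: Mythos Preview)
Your proposal is correct and follows essentially the same approach as the paper: the partial hodograph transformation of \S 5 (with the simultaneous flattening of $\Gamma_{\rm w}$), the linearization of \S 6 reducing to an oblique-derivative problem for the Laplacian where the weak-shock sign $\nu_{1}/\nu_{2}>0$ is exactly what makes Theorem~4.4 apply with a physically admissible weight, and the contraction of \S 7 driven by the quadratic structure of the remainders. The only detail you leave implicit that the paper carries out explicitly is the affine change of variables $Y=P\by$ turning the linearized operator into the Laplacian and the boundary operators into the canonical form $\partial_{\bnu^{\pm}}+\alpha^{\pm}\partial_{\btau^{\pm}}+\bc^{\pm}\cdot D_{\bx'}$ of Theorem~4.4, but this is a routine computation once the scheme is in place.
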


We remark that the same results hold if $\varphi^-(\bx)$ is replaced
by any smooth
incoming supersonic flow
near the background
potential function $\varphi^{-}_0(\bx)$.
This can be achieved by the same arguments below without difficulties.
For simplicity of presentation, we focus our proof on Problem 3.1.

\section{A Well-Posedness Theorem for Boundary Value Problems of the Poisson Equation
in a Dihedral Angle}

We now present here a well-established theory on boundary value
problems of the Poisson equation in a dihedral angle
established by Maz'ya, Plamenevskij, Reisman, and others in  \cite{KozlovMazyaRossmann1997},
\cite{MazyaPlamenevskij1971}--\cite{MazyaRossmann2010},
\cite{Reisman1981_elliptic},
and the references therein,
which will be employed for solving the free boundary problem, Problem 3.1.

\subsection{Weighted norms}
As before, denote $\bx=(x_{1},x_{2},\bx')\in\Real^{n}$
with $\bx'=(x_{3},\cdots,x_{n})\in\Real^{n-2}$.
Let $(r,\omega)$ be the polar coordinates for
$(x_{1},x_{2})\in\Real^{2}$ and $\omega_{*}\in (0,2\pi)$.
Define an angular domain $K$ in $\Real^{2}$ with its boundaries
$\gamma^{\pm}$ as in Fig. \ref{fig:angular_domain}:
\begin{eqnarray*}
&&K  = \set{(x_{1},x_{2})\in\Real^{2}:\ \abs{\omega}<\frac{\omega_{*}}{2}},\\
&&\gamma^{\pm} =\set{(x_{1},x_{2})\in\Real^{2}:\ \abs{\omega}=\omega^{\pm}\defs\pm\frac{\omega_{*}}{2}}.
\end{eqnarray*}
Then $\fd=K\times\Real^{n-2}$ is a domain of dihedral angles in $\Real^{n}$,
and $\Gamma^{\pm}=\gamma^{\pm}\times\Real^{n-2}$ are its two faces
intersecting at edge $\fe=\set{\bx:\ x_{1}=x_{2}=0,\ \bx'\in\Real^{n-2}}$.

\begin{defn}
Define the following \emph{ weighted H\"older norms}:
\begin{eqnarray}
\norm{u}_{\C_{\beta}^{\ell,\alpha}(\fd)}
& \defs & \sup_{\bx\in\fd}\sum_{|\kk|=0}^{\ell}r_{\bx}^{\beta-\ell-\alpha+|\kk|}\big|D^\kk u(\bx)\big| \nonumber\\
 &&+\sup_{\bx,\by\in\fd}\abs{\bx-\by}^{-\alpha}\,
 \sum_{|\kk|=0}^{\ell}\big|r_{\bx}^{\beta-\ell+|\kk|}D^{\kk} u(\bx)-r_{\by}^{\beta-\ell+|\kk|}D^{\kk} u(\by)\big|,
\label{eq:weighted_holder_norm_one}
\end{eqnarray}
where $0<\alpha<1$, $\ell=0,1,\cdots$, $\beta\in\Real$, $r_{\bx}=\sqrt{x_{1}^{2}+x_{2}^{2}}$,
$r_{\by}=\sqrt{y_{1}^{2}+y_{2}^{2}}$,
and $D^\kk=\partial_{x_1}^{k_1}\cdots \partial_{x_n}^{k_n}$ for multi-index $\kk=(k_1,\cdots, k_n)\in \Int^n_+$.
Denote by
$\C_{\beta}^{\ell,\alpha}(\fd)$ the completion of
set $\C_{c}^{\infty}(\overline{\fd}\setminus\fe)$ under
norm \eqref{eq:weighted_holder_norm_one}.
\end{defn}

\begin{rem*}
The weight $\beta$ in \eqref{eq:weighted_holder_norm_one}
has simultaneous control for both the regularity of $u$ near edge
$\fe$ and the asymptotic behavior as $r_{\bx}\sTo\infty$.
For our later use of the weighted H\"older norms, we will employ
double weights for different control for the regularity of $u$ near
edge $\fe$ and the asymptotic behavior.
Let $\beta_{0}$, $\beta_{\infty}\in\Real$.
Set
\[
\C_{\beta_{0},\beta_{\infty}}^{\ell,\alpha}(\fd)
\defs\C_{\beta_{0}}^{\ell,\alpha}(\fd)\cap\C_{\beta_{\infty}}^{\ell,\alpha}(\fd)
\]
with the weighted norm as
\[
\norm{u}_{(\ell,\alpha;\fd)}^{(\beta_{0},\beta_{\infty})}
\defs\norm{u}_{\C_{\beta_{0}}^{\ell,\alpha}(\fd)}
+\norm{u}_{\C_{\beta_{\infty}}^{\ell,\alpha}(\fd)}.
\]
\end{rem*}

\begin{defn}
A \emph{multiplier in $\C_{\beta}^{\ell,\alpha}(\fd)$}
is a function $\varphi$ such that
$$
\varphi u\in\C_{\beta}^{\ell,\alpha}(\fd)
\qquad \mbox{for any $u\in\C_{\beta}^{\ell,\alpha}(\fd)$}.
$$
We denote the set of all the multipliers in $\C_{\beta}^{\ell,\alpha}(\fd)$
by $\fm\C_{\beta}^{\ell,\alpha}(\fd)$.
\end{defn}

In fact, the multiplier space is independent of the weight power $\beta$
as shown in Maz'ya--Plamenevskij \cite{MazyaPlamenevskij1978_Schauder}:

\begin{prop}
\label{prop:multiplier}
$\fm\C_{\beta}^{\ell,\alpha}(\fd)=\C_{\ell+\alpha}^{\ell,\alpha}(\fd)$.
\end{prop}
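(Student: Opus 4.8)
The plan is to prove both inclusions $\fm\C_{\beta}^{\ell,\alpha}(\fd)\subseteq\C_{\ell+\alpha}^{\ell,\alpha}(\fd)$ and $\C_{\ell+\alpha}^{\ell,\alpha}(\fd)\subseteq\fm\C_{\beta}^{\ell,\alpha}(\fd)$ directly from the definition of the weighted H\"older norm \eqref{eq:weighted_holder_norm_one}. The key point to keep track of is which power of the weight $r_{\bx}$ multiplies each derivative $D^{\kk}u$: in the sup-part, a derivative of order $|\kk|$ carries the weight $r_{\bx}^{\beta-\ell-\alpha+|\kk|}$, and in the H\"older-seminorm part the weight $r_{\bx}^{\beta-\ell+|\kk|}$. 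So the only way a function $\varphi$ can fail to be a multiplier is by introducing extra blow-up near the edge $\fe$ or at infinity; the space $\C_{\ell+\alpha}^{\ell,\alpha}(\fd)$ is precisely the ``neutral weight'' choice for which derivatives are bounded without any power of $r_{\bx}$ (taking $\beta=\ell+\alpha$ makes $\beta-\ell-\alpha+|\kk|=|\kk|$, so only the top-order derivatives are weighted trivially while lower ones decay).

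For the inclusion $\C_{\ell+\alpha}^{\ell,\alpha}(\fd)\subseteq\fm\C_{\beta}^{\ell,\alpha}(\fd)$, I would take $\varphi\in\C_{\ell+\alpha}^{\ell,\alpha}(\fd)$ and $u\in\C_{\beta}^{\ell,\alpha}(\fd)$ and estimate $\norm{\varphi u}_{\C_{\beta}^{\ell,\alpha}(\fd)}$ using the Leibniz rule. For the sup-part, write $D^{\kk}(\varphi u)=\sum_{\kk'\leq\kk}\binom{\kk}{\kk'}D^{\kk'}\varphi\, D^{\kk-\kk'}u$; multiply by $r_{\bx}^{\beta-\ell-\alpha+|\kk|}$ and split the power as $r_{\bx}^{|\kk'|-(\ell+\alpha)+(\ell+\alpha)}\cdot r_{\bx}^{\beta-\ell-\alpha+|\kk-\kk'|}$ — wait, more carefully: $\beta-\ell-\alpha+|\kk| = \bigl(|\kk'|\bigr) + \bigl(\beta-\ell-\alpha+|\kk-\kk'|\bigr)$ up to the constant $-(\ell+\alpha)$ absorbed correctly, so that $r_{\bx}^{|\kk'|-(\ell+\alpha)}|D^{\kk'}\varphi|$ is controlled by $\norm{\varphi}_{\C_{\ell+\alpha}^{\ell,\alpha}}$ and $r_{\bx}^{\beta-\ell-\alpha+|\kk-\kk'|}|D^{\kk-\kk'}u|$ is controlled by $\norm{u}_{\C_{\beta}^{\ell,\alpha}}$. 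The H\"older-seminorm part is handled the same way: estimate $r_{\bx}^{\beta-\ell+|\kk|}D^{\kk}(\varphi u)(\bx)-r_{\by}^{\beta-\ell+|\kk|}D^{\kk}(\varphi u)(\by)$ by adding and subtracting a mixed term $r_{\bx}^{\cdots}D^{\kk'}\varphi(\bx)D^{\kk-\kk'}u(\bx)$ versus with $\by$, so that one difference hits $\varphi$ and is absorbed into its H\"older seminorm (times the bounded sup-norm of the appropriately weighted $u$-factor) and the other hits $u$. The factor $\abs{\bx-\by}^{-\alpha}$ then matches exactly one of the two seminorms, and I would need the elementary fact that $r_{\bx}/r_{\by}$ and $r_{\by}/r_{\bx}$ are comparable when $\abs{\bx-\by}$ is small relative to $r_{\bx}$, with the far-apart case handled by the sup-bounds instead.

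For the reverse inclusion $\fm\C_{\beta}^{\ell,\alpha}(\fd)\subseteq\C_{\ell+\alpha}^{\ell,\alpha}(\fd)$, the strategy is to test the multiplier property $\varphi$ against a well-chosen family of functions $u$. A natural choice is localized bumps: for a fixed point $\bx_0\in\fd$ with $r_{\bx_0}=:\rho$, take $u$ to be a smooth cutoff supported in the ball of radius $c\rho$ around $\bx_0$ (so that $r_{\bx}\sim\rho$ on the support), scaled so that $\norm{u}_{\C_{\beta}^{\ell,\alpha}}\sim 1$ independently of $\rho$ — this amounts to $u$ of size $\rho^{\ell+\alpha-\beta}$ with $|D^{\kk}u|\lesssim\rho^{\ell+\alpha-\beta-|\kk|}$. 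Then $\varphi u\in\C_{\beta}^{\ell,\alpha}$ with norm bounded by $\norm{\varphi}_{\fm}$, and reading off the $\kk=0$ term of the norm of $\varphi u$ at $\bx_0$ recovers $\rho^{\beta-\ell-\alpha}|\varphi(\bx_0)|\cdot\rho^{\ell+\alpha-\beta}=|\varphi(\bx_0)|\lesssim\norm{\varphi}_{\fm}$, i.e. $\varphi$ is bounded; iterating with difference quotients of these bumps recovers bounds on $r_{\bx}^{|\kk|-(\ell+\alpha)}|D^{\kk}\varphi|$ and the corresponding H\"older seminorms. \textbf{The main obstacle} I anticipate is precisely this reverse direction: making the test-function construction uniform in the scale $\rho$ all the way up to $\rho\to 0$ (near the edge) and $\rho\to\infty$, and ensuring the localized bumps genuinely lie in the completion $\C_{\beta}^{\ell,\alpha}(\fd)$ of $\C_c^\infty(\overline{\fd}\setminus\fe)$ — in particular one must respect that functions in that completion vanish suitably at $\fe$, so the bumps should be supported away from the edge, which is automatic since their supports sit at distance $\sim\rho>0$. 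Since Proposition~\ref{prop:multiplier} is attributed to Maz'ya--Plamenevskij \cite{MazyaPlamenevskij1978_Schauder}, I would cite that reference for the full details and only sketch the scaling argument above; the equality is essentially a statement that $C^{\ell,\alpha}_{\ell+\alpha}$ is the pointwise-multiplier algebra of the scale, a standard fact in the theory of weighted H\"older spaces on cones.
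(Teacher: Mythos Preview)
Your proposal is correct and actually goes further than the paper: the paper does not prove Proposition~\ref{prop:multiplier} at all but simply attributes it to Maz'ya--Plamenevskij \cite{MazyaPlamenevskij1978_Schauder} and states it without argument. Your sketch of both inclusions via the Leibniz rule (for $\C_{\ell+\alpha}^{\ell,\alpha}\subseteq\fm\C_{\beta}^{\ell,\alpha}$) and scaled test bumps (for the reverse) is the standard route and is sound; the weight-splitting $r_{\bx}^{\beta-\ell-\alpha+|\kk|}=r_{\bx}^{|\kk'|}\cdot r_{\bx}^{\beta-\ell-\alpha+|\kk-\kk'|}$ is exactly right once you note that $\beta=\ell+\alpha$ makes the multiplier's sup-weight $r_{\bx}^{|\kk'|}$. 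Since the paper itself is content with a citation, your plan to cite \cite{MazyaPlamenevskij1978_Schauder} and include only the scaling sketch is already more than what is required here.
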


\subsection{The well-posedness theorem}

Consider the elliptic boundary value problem in the dihedral angle
$\fd$:
\begin{align}
 & \triangle_{\bx}u=f & \text{ in } & \fd,\label{eq:dihedral_Laplace}\\
 & \partial\fp^{\pm}u=g^{\pm} & \text{ on } & \Gamma^{\pm},\label{eq:dihedral_boundary_conditions}
\end{align}
where $\triangle_{\bx}\defs\partial_{x_{1}x_{1}}+\partial_{x_{2}x_{2}}+\triangle_{\bx'}$
with $\triangle_{\bx'}\defs\sum_{j=3}^{n}\partial_{x_{j}x_{j}}$,
and $\partial\fp^{\pm}\defs\partial_{\bnu^{\pm}}+\alpha^{\pm}\partial_{\btau^{\pm}}+\bc^{\pm}\cdot D_{\bx'}$
with $\alpha^{\pm}\in\Real$, $\bc^{\pm}\in\Real^{n-2}$, $\bnu^{\pm}$
the inward normal of $\Gamma^{\pm}$, and $\btau^{\pm}$ tangent vector
to $\Gamma^{\pm}$, perpendicular to $\fe$ and directed from $\fe$
into $\fd$; see Fig. \ref{fig:angular_domain}.

\begin{figure}
\center
\setlength{\unitlength}{1bp}
\begin{picture}(300, 200)(0,0)
\includegraphics{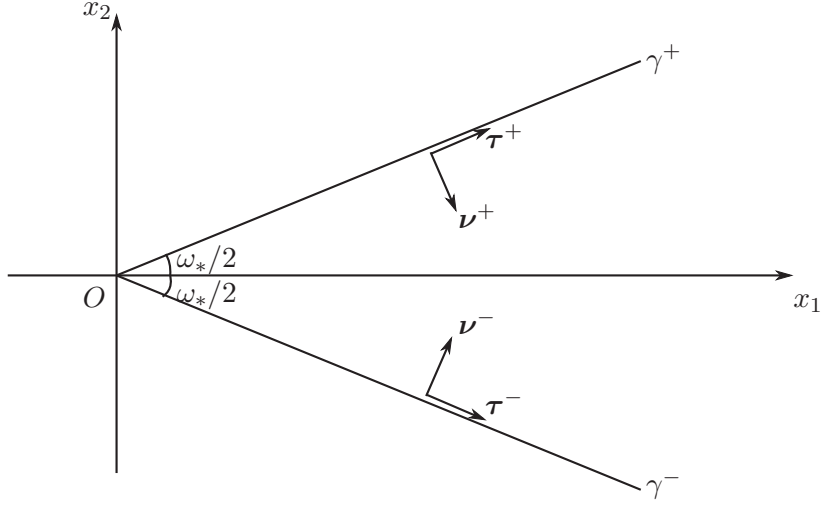}
\put(-265,70){${O}$}
\put(-55,160){$\gamma^{+}$}
\put(-55,0){$\gamma^{-}$}
\put(0,70){$x_{1}$}
\put(-265,180){$x_{2}$}
\put(-230,85){$\omega_{*}/2$}
\put(-230,72){$\omega_{*}/2$}
\put(-125,100){$\bnu^{+}$}
\put(-125,60){$\bnu^{-}$}
\put(-115,130){$\btau^{+}$}
\put(-115,30){$\btau^{-}$}
\end{picture}
\caption{The angular domain $K$ \label{fig:angular_domain}}
\end{figure}

Directly applying the results in \cite{MazyaPlamenevskij1978,MazyaPlamenevskij1978-SBJ,MazyaPlamenevskij1978_Schauder,Reisman1981_elliptic},
we obtain the following theorem for the boundary value
problem \eqref{eq:dihedral_Laplace}--\eqref{eq:dihedral_boundary_conditions}. For completeness, we will describe the main steps of the proof in the appendix.

\begin{thm} \label{thm:bvp_wellposedness_Laplace}
Let $\Phi=\arctan\alpha^{-}+\arctan\alpha^{+}$. Suppose that
\begin{equation}
-\frac{\Phi}{\omega_{*}}<\sigma<0\qquad\text{or}\qquad0<\sigma<-\frac{\Phi}{\omega_{*}},\label{eq:con_weights}
\end{equation}
and $\beta=2+\alpha-\sigma$. Then the operator of problem \eqref{eq:dihedral_Laplace}--\eqref{eq:dihedral_boundary_conditions}
induces the isomorphism
\[
\C_{\beta}^{2,\alpha}(\fd)\approx\C_{\beta}^{0,\alpha}(\fd)\times\prod\limits _{\pm}\C_{\beta}^{1,\alpha}(\Gamma^{\pm}).
\]
Moreover, suppose that both $\sigma_{1}$ and $\sigma_{2}$ satisfy \eqref{eq:con_weights},
and $\beta_{j}=2+\alpha-\sigma_{j}$. Assume that
\[
f\in\C_{\beta_{1},\beta_{2}}^{0,\alpha}(\fd),\qquad g^{\pm}\in\C_{\beta_{1},\beta_{2}}^{1,\alpha}(\Gamma^{\pm}).
\]
Then solution $u\in\C_{\beta_{1}}^{2,\alpha}(\fd)$
of problem \eqref{eq:dihedral_Laplace}--\eqref{eq:dihedral_boundary_conditions}
is also in $\C_{\beta_{2}}^{2,\alpha}(\fd)$, that is,
$u\in\C_{\beta_{1},\beta_{2}}^{2,\alpha}(\fd)$ with the estimate:
\begin{equation}
\norm{u}_{(2,\alpha;\fd)}^{(\beta_{1},\beta_{2})}\leq C\Big(\norm{f}_{(0,\alpha;\fd)}^{(\beta_{1},\beta_{2})}
+\sum_{\pm}\norm{g^{\pm}}_{(1,\alpha;\Gamma_{j})}^{(\beta_{1},\beta_{2})}\Big).
\label{eq:estimate_bvp_Laplace_double_weights}
\end{equation}
\end{thm}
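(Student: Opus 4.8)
The plan is to derive Theorem~\ref{thm:bvp_wellposedness_Laplace} as a specialization of the general Fredholm and asymptotic theory for elliptic boundary value problems in dihedral angles developed in \cite{MazyaPlamenevskij1978,MazyaPlamenevskij1978-SBJ,MazyaPlamenevskij1978_Schauder,Reisman1981_elliptic}; the only problem-specific input is the location of the spectrum of the associated operator pencil, which I would compute explicitly, and the sketch in the appendix is organized around precisely this point.

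First I would pass to the model problem. Discarding the genuinely lower-order terms $\bc^\pm\cdot D_{\bx'}$ (which in the Maz'ya--Plamenevskij scheme do not move the essential spectrum) and freezing the edge variable, one is left with the oblique-derivative problem $\triangle_{(x_1,x_2)}v=0$ in the plane angle $K$, $\partial_{\bnu^\pm}v+\alpha^\pm\partial_{\btau^\pm}v=0$ on $\gamma^\pm$. Writing $v=r^{\lambda}\phi(\omega)$ in polar coordinates (equivalently, taking the Mellin transform in $r$) reduces this to the ODE eigenvalue problem
\[
\phi''(\omega)+\lambda^{2}\phi(\omega)=0\ \ \text{on}\ \ \Big(-\tfrac{\omega_{*}}{2},\tfrac{\omega_{*}}{2}\Big),\qquad
\phi'\Big(\tfrac{\omega_{*}}{2}\Big)=\alpha^{+}\lambda\,\phi\Big(\tfrac{\omega_{*}}{2}\Big),\qquad
\phi'\Big(-\tfrac{\omega_{*}}{2}\Big)=-\alpha^{-}\lambda\,\phi\Big(-\tfrac{\omega_{*}}{2}\Big).
\]
Inserting the ansatz $\phi(\omega)=\cos\big(\lambda(\omega-\omega_{0})\big)$, the two boundary relations become $\lambda(\tfrac{\omega_{*}}{2}-\omega_{0})\equiv-\arctan\alpha^{+}$ and $\lambda(-\tfrac{\omega_{*}}{2}-\omega_{0})\equiv\arctan\alpha^{-}$ modulo $\pi$; subtracting gives the secular equation $\lambda\omega_{*}\equiv-\Phi\pmod{\pi}$ with $\Phi=\arctan\alpha^{-}+\arctan\alpha^{+}$, together with the distinguished root $\lambda=0$ (eigenfunction $\phi\equiv1$). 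Hence the real eigenvalues are $\lambda=0$ and $\lambda=(k\pi-\Phi)/\omega_{*}$, $k\in\Int$, and since $\Phi\in(-\pi,\pi)$ the eigenvalues $0$ and $-\Phi/\omega_{*}$ are consecutive, so the open strip they bound is spectrum-free (one also checks, as recorded in the appendix, that no complex eigenvalue lies in this strip). Recording that the weight exponent of $\C^{2,\alpha}_{\beta}$ enters through $\sigma=2+\alpha-\beta$, this strip is exactly the region described by \eqref{eq:con_weights}.

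Second I would invoke the general theory: across a weight line free of pencil spectrum the operator of \eqref{eq:dihedral_Laplace}--\eqref{eq:dihedral_boundary_conditions} is Fredholm between the corresponding weighted H\"older spaces, its index, kernel and cokernel changing only when the line crosses an eigenvalue. Tracking the index from a reference weight and noting that the only candidate homogeneous solutions are the singular functions $r^{\lambda}$ (possibly with logarithmic factors) attached to eigenvalues $\lambda$ crossed, which have the wrong growth to lie in $\C^{2,\alpha}_{\beta}(\fd)$ for $\beta=2+\alpha-\sigma$ with $\sigma$ as in \eqref{eq:con_weights}, one concludes both triviality of the kernel and of the cokernel, i.e. the isomorphism. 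The term $\alpha^{\pm}\partial_{\btau^{\pm}}$ is part of the pencil and is already accounted for above, while $\bc^{\pm}\cdot D_{\bx'}$ is absorbed as a lower-order perturbation exactly as in \cite{MazyaPlamenevskij1978-SBJ,Reisman1981_elliptic}.

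Finally, for the double-weight assertion I would argue by uniqueness across the spectrum-free strip. Given $u\in\C^{2,\alpha}_{\beta_{1}}(\fd)$ solving the problem with data in $\C^{0,\alpha}_{\beta_{1},\beta_{2}}(\fd)\times\prod_{\pm}\C^{1,\alpha}_{\beta_{1},\beta_{2}}(\Gamma^{\pm})$, the isomorphism already established for the weight $\beta_{2}$ produces $\tilde u\in\C^{2,\alpha}_{\beta_{2}}(\fd)$ with the same right-hand sides; then $w:=u-\tilde u$ solves the homogeneous problem, and by the asymptotic-expansion part of the theory $w$ is a finite combination of singular solutions associated to eigenvalues of the pencil lying in the strip determined by $\sigma_{1}$ and $\sigma_{2}$. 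Since $\sigma_{1},\sigma_{2}$ lie in the same component of \eqref{eq:con_weights}, that strip contains no eigenvalue, so $w\equiv0$ and $u=\tilde u\in\C^{2,\alpha}_{\beta_{1}}(\fd)\cap\C^{2,\alpha}_{\beta_{2}}(\fd)=\C^{2,\alpha}_{\beta_{1},\beta_{2}}(\fd)$; estimate \eqref{eq:estimate_bvp_Laplace_double_weights} then follows by adding the single-weight estimates for $\beta_{1}$ and $\beta_{2}$. The main obstacle throughout is the spectral computation of the previous paragraph: one must pin down the secular equation, exclude any extra (real or complex) eigenvalue inside the candidate interval, and translate the algebraic condition on $\lambda$ into the stated inequality on $\sigma$; once this is in hand, everything else is a direct application of the cited general results.
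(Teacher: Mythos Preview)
Your spectral computation for the Mellin pencil is correct and matches the paper's Step~1. The gap is in how you handle the edge-tangential term $\bc^{\pm}\cdot D_{\bx'}$. You call it ``genuinely lower-order'' and say it is ``absorbed as a lower-order perturbation''; in the dihedral setting this is false. The weight in $\C^{\ell,\alpha}_{\beta}(\fd)$ depends only on $r=\sqrt{x_1^2+x_2^2}$, so $\partial_{x_j}$ for $j\ge 3$ maps $\C^{2,\alpha}_{\beta}\to\C^{1,\alpha}_{\beta}$ with exactly the same weight shift as $\partial_{x_1},\partial_{x_2}$. It is neither of lower order nor compact (the domain is unbounded in the edge directions), so it cannot be discarded or perturbed away.

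The paper's appendix does not discard this term. It takes the Fourier transform in $\bx'$, which converts $\bc^{\pm}\cdot D_{\bx'}$ into $\mi\bta\cdot\bc^{\pm}$ and $\triangle_{\bx'}$ into $-|\bta|^2$; after rescaling $Y=|\bta|X$ one obtains, for every $\btheta\in S^{n-3}$, the \emph{nonhomogeneous} model problem $(-\triangle_Y+1,\ \fp^{\pm}(D_Y;\btheta))$ on $K$ with a genuine zeroth-order complex term $\mi\btheta\cdot\bc^{\pm}$ in the boundary operator (Step~2). For this family the Mellin pencil only gives Fredholmness (Theorem~A.3); the isomorphism is obtained by Reisman's energy estimate (Theorem~A.4), and that estimate is precisely what singles out the interval \eqref{eq:con_weights}. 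Your argument---index tracking plus exclusion of $r^{\lambda}$ singular functions---is the 2-D mechanism; if it worked here it would give an isomorphism in \emph{every} strip free of pencil eigenvalues, which is exactly what fails for $n\ge 3$ and is the point the paper stresses when contrasting the M-D and 2-D cases. Once Reisman's step yields the $L^2$ isomorphism on $\fd$ (Step~3), the Schauder isomorphism and the double-weight estimate follow from the Green-function estimates of Maz'ya--Plamenevskij (Step~4), and your uniqueness argument for the double-weight part is then fine.
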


\section{The Partial Hodograph Transformation}

To solve Problem 3.1, the free boundary problem \eqref{eq:potential_eq_reduced}--\eqref{eq:bdry_con_shock_2},
our strategy is to fix first the free boundary $\Gamma_{\rm s}$. To achieve this,
we introduce the following partial hodograph transformation:
\[
\fp\bx=\by=(y_{1,}y_{2},\by')^{\top}\defs(\varphi(\bx),\ x_{2}-\varphi_{\rm w}(x_{1},\bx'),\ \bx')^{\top},
\]
which is invertible as $ \partial_{x_{1}}\varphi\not=0 $, and we denote its inverse by
\[
\fp^{-1}\by=\bx=(x_{1,}x_{2},\bx')^{\top}\defs(u(\by),\ y_{2}+\varphi_{\rm w}(u(\by),\by'),\ \by')^{\top}.
\]
Taking the partial derivatives to the equation:
$$
y_{1}=\varphi\circ\fp^{-1}(\by)
$$
with respect to $y_{j}, j=1,\cdots,n$, we have
\begin{eqnarray*}
&&\partial_{x_{1}}\varphi = \frac{1}{\partial_{y_{1}}u}\left(1+\partial_{x_{1}}\varphi_{\rm w}(u,\by')\,\partial_{y_{2}}u\right),
\qquad \partial_{x_{2}}\varphi =-\frac{\partial_{y_{2}}u}{\partial_{y_{1}}u},\\
&&\partial_{x_{j}}\varphi  = -\frac{1}{\partial_{y_{1}}u}\left(\partial_{y_{j}}u-\partial_{x_{j}}\varphi_{\rm w}(u,\by')\,\partial_{y_{2}}u\right),\qquad j=3,\cdots,n,
\end{eqnarray*}
that is,
\[
D\varphi=\frac{1}{\partial_{y_{1}}u}\left(1+\partial_{x_{1}}\varphi_{\rm w}\partial_{y_{2}}u,-\partial_{y_{2}}u,
 -\partial_{y_{3}}u+\partial_{x_{3}}\varphi_{\rm w}\partial_{y_{2}}u,\cdots,
  -\partial_{y_{n}}u+\partial_{x_{n}}\varphi_{\rm w}\partial_{y_{2}}u\right)^{\top}.
\]
Thus, the Jacobi matrix of transformation $\fp$ is
\[
\frac{\partial\by}{\partial\bx}
=\begin{bmatrix}\partial_{x_{1}}\varphi & \partial_{x_{2}}\varphi & \partial_{x_{3}}\varphi & \cdots & \partial_{x_{n}}\varphi\\
-\partial_{x_{1}}\varphi_{\rm w} & 1 & -\partial_{x_{3}}\varphi_{\rm w} & \cdots & -\partial_{x_{n}}\varphi_{\rm w}\\
0 & 0 & 1 & \cdots & 0\\
\vdots & \vdots & \vdots & \vdots & \vdots\\
0 & 0 & 0 & \cdots & 1
\end{bmatrix}\defs\frac{1}{\partial_{y_{1}}u}J^{\top},
\]
where
\[
J\defs\begin{bmatrix}1+\partial_{x_{1}}\varphi_{\rm w}\partial_{y_{2}}u & -\partial_{x_{1}}\varphi_{\rm w}\partial_{y_{1}}u & 0 & \cdots & 0\\
-\partial_{y_{2}}u & \partial_{y_{1}}u & 0 & \cdots & 0\\
-\partial_{y_{3}}u+\partial_{x_{3}}\varphi_{\rm w}\partial_{y_{2}}u & -\partial_{x_{3}}\varphi_{\rm w}\partial_{y_{1}}u & \partial_{y_{1}}u & \cdots & 0\\
\vdots & \vdots & \vdots & \vdots & \vdots\\
-\partial_{y_{n}}u+\partial_{x_{n}}\varphi_{\rm w}\partial_{y_{2}}u & -\partial_{x_{n}}\varphi_{\rm w}\partial_{y_{1}}u & 0 & \cdots & \partial_{y_{1}}u
\end{bmatrix}.
\]
After a direct computation, we also obtain
\begin{align*}
\frac{\partial\left(D_{\bx}\varphi\right)}{\partial\left(D_{\by}u,\ u,\ \by'\right)} \defs &
\begin{bmatrix}
	\frac{\partial(\partial_{x_1}\varphi)}{\partial(\partial_{y_1}u)} & \cdots & \frac{\partial(\partial_{x_1}\varphi)}{\partial(\partial_{y_n}u)} & \frac{\partial(\partial_{x_1}\varphi)}{\partial u} & \frac{\partial(\partial_{x_1}\varphi)}{\partial y_3}  & \cdots & \frac{\partial(\partial_{x_1}\varphi)}{\partial y_n}  \\
	\frac{\partial(\partial_{x_2}\varphi)}{\partial(\partial_{y_1}u)} & \cdots & \frac{\partial(\partial_{x_2}\varphi)}{\partial(\partial_{y_n}u)} & \frac{\partial(\partial_{x_2}\varphi)}{\partial u} & \frac{\partial(\partial_{x_2}\varphi)}{\partial y_3}  & \cdots & \frac{\partial(\partial_{x_2}\varphi)}{\partial y_n}  \\
	\cdots & \cdots & \cdots & \cdots & \cdots  & \cdots & \cdots \\
	\frac{\partial(\partial_{x_n}\varphi)}{\partial(\partial_{y_1}u)} & \cdots & \frac{\partial(\partial_{x_n}\varphi)}{\partial(\partial_{y_n}u)} & \frac{\partial(\partial_{x_n}\varphi)}{\partial u} & \frac{\partial(\partial_{x_n}\varphi)}{\partial y_3}  & \cdots & \frac{\partial(\partial_{x_n}\varphi)}{\partial y_n}
\end{bmatrix}_{n\times(2n-1)}\\
= & \begin{bmatrix}
	-\frac{1}{\left(\partial_{y_{1}}u\right)^{2}}J & \frac{\partial_{y_{2}}u}{\partial_{y_{1}}u}W_{1}  & \frac{\partial_{y_{2}}u}{\partial_{y_{1}}u}W_{3}  & \cdots &\frac{\partial_{y_{2}}u}{\partial_{y_{1}}u}W_{n}
\end{bmatrix}_{n\times(2n-1)},
\end{align*}
where $W_{j}\defs(\partial_{x_{j}x_{1}}\varphi_{\rm w}(u,\by'),\ 0, \partial_{x_{j}x_{3}}\varphi_{\rm w}(u,\by'),\ \cdots,
\partial_{x_{j}x_{n}}\varphi_{\rm w}(u,\by'))^\top $, with $ j=1,\ 3,\ \cdots,\ n $.

Notice that
\begin{eqnarray*}
D_{\bx}^{2}\varphi & = & \frac{\partial(D_{\bx}\varphi)}{\partial(D_{\by}u,\ u,\ \by')}
\begin{bmatrix}
D_{y}^{2}u \\
(D_{\by} u)^\top\\
\displaystyle\frac{\partial\by'}{\partial\by}
\end{bmatrix}_{(2n-1)\times n}\frac{\partial\by}{\partial\bx} \\
 & = & -\frac{1}{\left(\partial_{y_{1}}u\right)^{3}}J D_{\by}^{2}u J^{\top} + \frac{\partial_{y_{2}}u}{(\partial_{y_{1}}u)^2}W_1 (D_{\by} u)^\top J^{\top} + \frac{\partial_{y_{2}}u}{(\partial_{y_{1}}u)^2}W' \frac{\partial\by'}{\partial\by} J^{\top} \\
 & \defs &  -\frac{1}{\left(\partial_{y_{1}}u\right)^{3}}J D_{\by}^{2}u J^{\top} + J_{\rm w},
\end{eqnarray*}
where $ \displaystyle\frac{\partial\by'}{\partial\by} = \left[\frac{\partial y_{i}}{\partial{y_j}}\right]_{(n-2)\times n} $ ($ i=3,\cdots,n $ and $ j=1,\cdots, n $),  $ W'\defs[W_{3},\cdots,W_{n}] $ and
\[
	J_{\rm w} = J_{\rm w}(D^2\varphi_{\rm w}; Du,\ D\varphi_{\rm w}(u,\by')) \defs \frac{\partial_{y_{2}}u}{(\partial_{y_{1}}u)^2}W_1 (D_{\by} u)^\top J^{\top} + \frac{\partial_{y_{2}}u}{(\partial_{y_{1}}u)^2}W' \frac{\partial\by'}{\partial\by} J^{\top}.
\]
Then we have
\begin{eqnarray*}
\sum_{i,j=1}^{n}a_{ij}\partial_{x_{i}x_{j}}\varphi
&=& \tr(A^{\top}D_{\bx}^{2}\varphi) \\
&=& -\frac{1}{\big(\partial_{y_{1}}u\big)^{3}}\tr(AJD_{\by}^{2}uJ^{\top}) + \tr(A J_{\rm w}) \\
&=&-\frac{1}{\big(\partial_{y_{1}}u\big)^{3}}\tr(J^{\top}AJ D_{\by}^{2}u)
  + \tr(A J_{\rm w}) \\
&=&-\frac{1}{\big(\partial_{y_{1}}u\big)^{3}}\sum_{i,j=1}^{n}\tilde{a}_{ij}\partial_{y_{i}y_{j}}u + \Phi_{\rm w},
\end{eqnarray*}
where
$\tilde{A}=\tilde{A}^{\top}=J^{\top}AJ\defs\begin{bmatrix}\tilde{a}_{ij}\end{bmatrix}_{n\times n}$
with $\tilde{a}_{ij}=\tilde{a}_{ij}(Du;D\varphi^{-}(u,y_{2},\by'); D\varphi_{\rm w}(u,\by'))$, and
$$
\Phi_{\rm w} = \Phi_{\rm w}(D^2\varphi_{\rm w};Du;D\varphi_{\rm w}(u,\by')) \defs \tr(A J_{\rm w}).
$$
Thus, under the partial hodograph transformation, the potential flow equation
\eqref{eq:potential_eq_reduced} becomes
\begin{equation}
-\frac{1}{\big(\partial_{y_{1}}u\big)^{3}}\sum_{i,j=1}^{n}\tilde{a}_{ij}\partial_{y_{i}y_{j}}u
 + \Phi_{\rm w} = \sum_{i,j=1}^{n}a_{ij}\partial_{x_{i}x_{j}}\varphi_{0}^{-}=0.
\label{eq:potential_eq_pht}
\end{equation}

Under the partial hodograph transformation, $\Gamma_{\rm w}$ becomes
\[
\Gamma_{1}=\set{y_{2}=0,\ y_{1}>0,\ \by'\in\Real^{n-2}},
\]
as shown in Fig. \ref{fig:Hodograph}, and the boundary condition \eqref{eq:bdry_con_wedge}
on the wedge becomes
\begin{equation}
G_{1}(Du;D\varphi_{\rm w}(u,\by'))=0,\label{eq:bdry_con_wedge_pht}
\end{equation}
where
\begin{eqnarray*}
G_{1}(Du;D\varphi_{\rm w})
&\defs&\Big(\partial_{x_{1}}\varphi_{\rm w}\partial_{x_{1}}\varphi^{-}-\partial_{x_{2}}\varphi^{-}
  +\sum_{j=3}^{n}\partial_{x_{j}}\varphi_{\rm w}\partial_{x_{j}}\varphi^{-}\Big)\partial_{y_{1}}u\\
 && -(1+\abs{D\varphi_{\rm w}}^{2})\partial_{y_{2}}u
    +\sum_{j=3}^{n}\partial_{x_{j}}\varphi_{\rm w}\partial_{y_{j}}u
    -\partial_{x_{1}}\varphi_{\rm w}.
\end{eqnarray*}
The shock front $\Gamma_{\rm s}$
becomes a fixed boundary $\Gamma_{2}=\set{y_{1}=0,\ y_{2}>0}$, and
the boundary condition \eqref{eq:bdry_con_shock_1} becomes
\begin{equation}
G_{2}(Du;D\varphi_{\rm w})=0,
\label{eq:bdry_con_shock_pht}
\end{equation}
where
\[
G_{2}(Du;D\varphi_{\rm w})\defs H_{\rm s}(D\varphi(Du,D\varphi_{\rm w});D\varphi_{0}^{-}).
\]

Finally, since the wedge edge:
$$
\set{\bx\in\Real^{n}:\ x_1=\varphi_{\rm w}^{\rm e}(\bx'),\ x_2=\varphi(x_1,\bx'),\ \bx'\in\Real^{n-2}}
$$
is the intersection of the wedge surface and the shock-front, which yields that $\varphi\equiv 0$ on
the edge, by condition \eqref{eq:bdry_con_shock_1}.
Thus, the tangential derivatives of $\varphi$ on the edge should be $0$.
Then, under the partial hodograph transformation,
\begin{equation}\label{eq:tangential_deri_edge_pht}
	\partial_{y_j}u = \partial_{x_j}\varphi_{\rm w}^{\rm e}(\by')
\end{equation}
on edge $\set{\by\in\Real^{n}:\ y_1=y_2=0,\ \by'\in\Real^{n-2}}$.
Therefore, on the edge, $u(0,0,\by')=\varphi_{\rm w}^{\rm e}(\by')$.

\begin{figure}
\centering
\includegraphics[width=200pt]{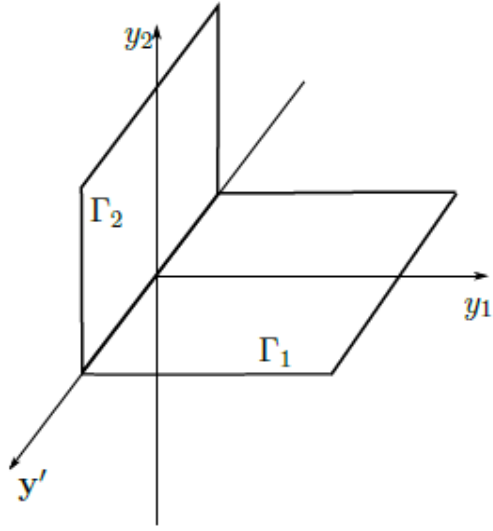}
\caption{The domain after the partial hodograph transformation\label{fig:Hodograph}}
\end{figure}

\begin{rem}
For the background transonic shock solution
$(\varphi_{0}^{-}(\bx);\varphi_{0}^{+}(\bx))$,
we have
\[
\varphi_{0}(\bx)=\varphi_{0}^{-}(\bx)-\varphi_{0}^{+}(\bx)
=x_{1}\left(q_{0}^{-}\cos\alpha_{\rm w}-q_{0}^{+}\cos\omega_{1}\right)
-x_{2}q_{0}^{-}\sin\alpha_{\rm w},
\]
and the corresponding partial hodograph transformation is
\[
\fp_{0}\bx=\by=(y_{1,}y_{2},\by')^{\top}\defs(\varphi_{0}(\bx),\ x_{2},\ \bx')^{\top}.
\]
It is invertible since $ \partial_{x_{1}}\varphi_{0} = q_{0}^{-}\cos\alpha_{\rm w}-q_{0}^{+}\cos\omega_{1} > 0 $, and its inverse is
\[
\fp_{0}^{-1}\by=\bx=(x_{1,}x_{2},\bx')^{\top}\defs(u_{0}(\by),\ y_{2},\ \by')^{\top},
\]
where
\[
u_{0}(\by)=\frac{y_{1}+y_{2}q_{0}^{-}\sin\alpha_{\rm w}}{q_{0}^{-}\cos\alpha_{\rm w}-q_{0}^{+}\cos\omega_{1}}.
\]
Then we have
\[
	\partial_{y_{1}}u_{0} = \frac{1}{q_{0}^{-}\cos\alpha_{\rm w}-q_{0}^{+}\cos\omega_{1}} > 0.
\]
Therefore, the partial hodograph transformation is still invertible in the case that $ u(y) $ is a small
perturbation of $ u_{0}(y) $ such that $ \abs{\partial_{y_{1}}u - \partial_{y_{1}}u_{0}} $ is small enough.
\end{rem}

We now solve the deduced fixed boundary value
problem \eqref{eq:potential_eq_pht}--\eqref{eq:tangential_deri_edge_pht}
near $u_{0}(\by)$ and prove the following theorem
which implies our main theorem, Theorem \ref{thm:main}.

\begin{thm}\label{thm:main_pht}
Let $(\varphi_{0}^{-}(\bx);\varphi_{0}^{+}(\bx))$
be the weak transonic shock solution that is represented
by $B$ on the shock polar in Fig. {\rm \ref{fig:shock_polar}}.
Then there exist constants $\delta_{0}>0$ and $\sigma_{\rm s}>0$
depending on the background solution
such that, for any $-1<\sigma_{\infty}\leq0<\sigma_{0}<\sigma_{\rm s}$,
if the wedge edge is not perturbed, that is, \eqref{eq:edge_position} holds,
and the perturbation of the wedge surface $\Gamma_{\rm w}$ satisfies
\begin{equation}\label{eq:wedge_perturbation_pht}
\norm{\varphi_{\rm w}}_{(2,\alpha;\Real_{+}\times\Real^{n-2})}^{(\beta_{0},\beta_{\infty})}
\leq\delta\leq\delta_{0}
\end{equation}
for $\beta_{0}=1+\alpha-\sigma_{0}$ and $\beta_{\infty}=1+\alpha-\sigma_{\infty}$,
then there exists a unique solution $u\left(\by\right)$ to the boundary
value problem \eqref{eq:potential_eq_pht}--\eqref{eq:tangential_deri_edge_pht}
satisfying
\begin{equation}\label{eq:solu_estimate_pht}
	\norm{u-u_{0}}_{(2,\alpha;\fd)}^{(\beta_{0},\beta_{\infty})}\leq K\delta,
\end{equation}
where $K>0$ depends on the background solution, but is independent
of $\delta_{0}>0$.
\end{thm}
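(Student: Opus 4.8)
The plan is to solve the deduced fixed boundary value problem \eqref{eq:potential_eq_pht}--\eqref{eq:tangential_deri_edge_pht} by linearizing about the background profile $u_{0}$ and running a contraction-mapping iteration in the double-weighted H\"older spaces $\C^{2,\alpha}_{\beta_{0},\beta_{\infty}}(\fd)$. Write $u=u_{0}+\psi$ and treat $\psi$ as the small unknown. Expanding the quasilinear operator in \eqref{eq:potential_eq_pht} and the boundary functionals $G_{1},G_{2}$ about $u_{0}$ and using that $u_{0}$ solves the background problem, one finds that the leading matrix $\big[\tilde a_{ij}(Du_{0};D\varphi_{0}^{-};0)\big]$ is constant and positive definite; I would first apply a fixed linear change of the $(y_{1},y_{2})$-variables normalizing it to the identity, so the linearized interior equation becomes $\triangle_{\by}\psi=$ (terms linear in $\psi,\varphi_{\rm w}$ and their derivatives), while the conditions on $\Gamma_{1}$ (wedge) and $\Gamma_{2}$ (shock) take the oblique form $\partial\fp^{\pm}\psi=g^{\pm}$ of Theorem~\ref{thm:bvp_wellposedness_Laplace}, now with $\omega_{*}$ the transformed opening angle. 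Since \eqref{eq:edge_position} gives $\varphi_{\rm w}^{\rm e}\equiv0$, the edge relation \eqref{eq:tangential_deri_edge_pht} is homogeneous and will hold automatically once $\psi\in\C^{2,\alpha}_{\beta_{0}}(\fd)$ with $\sigma_{0}>0$, because then $D\psi$ vanishes on $\fe$.

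The decisive structural point is the value of $\Phi=\arctan\alpha^{-}+\arctan\alpha^{+}$ for the oblique coefficients produced by linearizing $G_{1}$ and $G_{2}$ at the shock represented by $B$. I would compute $\alpha^{\pm}$ explicitly from the Rankine--Hugoniot functional $H_{\rm s}$ and the rigidity functional, and verify, using the shock polar of \S2 as developed in \S6, that the angle between the subsonic velocity $U_{0}^{+}$ and the outer normal of the shock polar at $B$ has positive tangent; this places $\Phi$ in the range for which \eqref{eq:con_weights} admits weights $\beta=2+\alpha-\sigma$ with $\sigma$ running over an interval that contains the values needed for a physically reasonable solution, i.e. bounded velocity $D\varphi=\frac{1}{\partial_{y_{1}}u}(\cdots)$, which forces $\sigma_{0}>0$. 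Fixing $\sigma_{\rm s}>0$ accordingly (essentially $-\Phi/\omega_{*}$, capped so that $\psi$ stays genuinely lower order than the linear growth of $u_{0}$, which also explains the restriction $\sigma_{\infty}>-1$), Theorem~\ref{thm:bvp_wellposedness_Laplace} and its double-weight refinement provide a bounded linear solution operator $\mathcal{T}$ for the linearized problem with data in $\C^{0,\alpha}_{\beta_{0},\beta_{\infty}}(\fd)\times\prod_{\pm}\C^{1,\alpha}_{\beta_{0},\beta_{\infty}}(\Gamma^{\pm})$.

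Next I would set up the iteration $u^{(k+1)}=u_{0}+\mathcal{T}\big(N[u^{(k)};\varphi_{\rm w}]\big)$, where $N$ gathers all the nonlinear interior remainders (including $\Phi_{\rm w}$ and $J_{\rm w}$, which carry $D^{2}\varphi_{\rm w}$ and products with $Du$), the nonlinear boundary remainders, and the linear-in-$\varphi_{\rm w}$ forcing. Each term of $N$ is either linear in $\varphi_{\rm w}$, hence of size $O(\delta)$ in the relevant weighted norm by \eqref{eq:wedge_perturbation_pht} and the fact that differentiation maps $\C^{2,\alpha}_{\beta}\to\C^{1,\alpha}_{\beta}\to\C^{0,\alpha}_{\beta}$, or quadratic in $(\psi^{(k)},D\varphi_{\rm w},D^{2}\varphi_{\rm w})$; all products are estimated with Proposition~\ref{prop:multiplier}, using that $\sigma_{0}>0$ makes $D\varphi_{\rm w}$ and $D\psi$ multipliers in the spaces in play, and that $\partial_{y_{1}}u=\partial_{y_{1}}u_{0}+O(\psi)$ stays bounded away from $0$ on the ball $\{\norm{u-u_{0}}_{(2,\alpha;\fd)}^{(\beta_{0},\beta_{\infty})}\le K\delta\}$ once $\delta_{0}$ is small, so the hodograph inversion of the Remark in \S5 remains valid. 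Choosing $K$ large (in terms of the norm of $\mathcal{T}$ and the background solution) and $\delta_{0}$ small, the map $u^{(k)}\mapsto u^{(k+1)}$ sends this ball into itself and is a contraction in a weaker weighted norm; a standard compactness/lower-semicontinuity argument then upgrades the fixed point $u$ to the estimate \eqref{eq:solu_estimate_pht}. Undoing the normalization and the partial hodograph transformation yields the solution claimed in Theorem~\ref{thm:main}.

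I expect the main obstacle to be twofold. First is the shock-polar computation establishing that $\Phi$ has exactly the sign that slips a \emph{physically admissible} weight interval inside the range permitted by \eqref{eq:con_weights} for the weak shock $B$; this is precisely where the weak and strong cases part ways (cf. \S6 and \S8), so the argument here genuinely uses that $B$, not $A$, is the base point. Second is the package of weighted nonlinear estimates: bounding $\Phi_{\rm w}$, $J_{\rm w}$, and the coefficient differences $\tilde a_{ij}(Du;\cdot)-\tilde a_{ij}(Du_{0};\cdot)$ \emph{simultaneously} with respect to the edge weight $\beta_{0}$ and the far-field weight $\beta_{\infty}$, while keeping the derivative bookkeeping in the fully nonlinear boundary functionals $G_{1},G_{2}$ consistent so that the iteration closes in $\C^{2,\alpha}_{\beta_{0},\beta_{\infty}}$.
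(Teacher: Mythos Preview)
Your proposal is correct and follows essentially the same route as the paper: linearize at $u_{0}$, normalize the constant principal part to the Laplacian via a linear change of $(y_{1},y_{2})$-coordinates, verify the sign condition on $\Phi$ (equivalently $\nu_{1}/\nu_{2}>0$) that holds precisely at the weak shock $B$, invoke Theorem~\ref{thm:bvp_wellposedness_Laplace} for the solution operator, and then close a contraction-mapping iteration using Proposition~\ref{prop:multiplier} for the nonlinear remainders.

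One small simplification relative to your sketch: the paper carries out the contraction directly in the strong norm $\norm{\,\cdot\,}_{(2,\alpha;\fd)}^{(\beta_{0},\beta_{\infty})}$, obtaining $\norm{\dot u_{1}-\dot u_{2}}\le\tfrac12\norm{\dot v_{1}-\dot v_{2}}$ once $\delta_{0}$ is small. There is no need to drop to a weaker norm and then upgrade via compactness, because the linearized solution operator already acts boundedly from $\C^{0,\alpha}_{\beta_{0},\beta_{\infty}}\times\prod\C^{1,\alpha}_{\beta_{0},\beta_{\infty}}$ into $\C^{2,\alpha}_{\beta_{0},\beta_{\infty}}$, and every nonlinear remainder carries a factor of size $O(K\delta)$ in the multiplier space $\C^{0,\alpha}_{\alpha}$ by Proposition~\ref{prop:multiplier}. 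Your version would work but is a detour.
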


\begin{rem}
Theorem \ref{thm:main_pht} will be proved via a nonlinear iteration scheme,
in which the linearized problem plays an important role.
The linearized problem can be reformulated as an oblique derivative boundary value
problem of the Poisson equation,
which can be solved without conditions \eqref{eq:tangential_deri_edge_pht} on the edge,
according to Theorem \ref{thm:bvp_wellposedness_Laplace}.
Thus, it looks like that problem \eqref{eq:potential_eq_pht}--\eqref{eq:tangential_deri_edge_pht}
is over-determined, which is exactly the instability mechanism
for strong transonic solutions shown in \cite{LXY}.
In Theorem \ref{thm:main_pht}, since $\sigma_{0}>0$, estimate \eqref{eq:solu_estimate_pht}
yields that $Du - Du_{0} \equiv 0$
on edge $\set{y_1=0,\ y_2=0,\ \by'\in\Real^{n-2}}$,
which indicates that, as the wedge edge is not perturbed such that \eqref{eq:edge_position} holds,
conditions \eqref{eq:tangential_deri_edge_pht} hold automatically,
and solution $u(y)$ to problem \eqref{eq:potential_eq_pht}--\eqref{eq:bdry_con_shock_pht}
is indeed a solution to problem \eqref{eq:potential_eq_pht}--\eqref{eq:tangential_deri_edge_pht}.
Thus, the instability mechanism for strong transonic shocks shown in \cite{LXY}
may not happen for weak transonic shocks.
\end{rem}

\section{The Linearized Problem on the Background Shock Solution}

To prove Theorem \ref{thm:main_pht},
we first linearize the nonlinear
problem \eqref{eq:potential_eq_pht}--\eqref{eq:bdry_con_shock_pht}
on the background shock solution,
then solve this corresponding linearized elliptic
problem, and finally develop a nonlinear iteration scheme that is proved to be
contractive. Therefore, the well-posedness theory for the linearized
problem also plays an important role in our approach for the
stability analysis of the transonic
shocks.

Let
\[
u(\by)=u_{0}(\by)+\dot{u}(\by).
\]
Then the linearized problem for the nonlinear
problem \eqref{eq:potential_eq_pht}--\eqref{eq:bdry_con_shock_pht}
on the background solution $u_{0}(\by)$ reads
\begin{align*}
 & \sum_{i,j=1}^{n}\tilde{a}_{ij}^{0}\partial_{y_{i}y_{j}}\dot{u}
  =-\big(\partial_{y_{1}}u_{0}\big)^{3}f(\dot{u};\varphi_{\rm w}) & \text{ in } & \fd,\\
 & \nabla_{Du}G_{j}(Du_{0};0)\cdot D\dot{u}=g_{j}(\dot{u};\varphi_{\rm w}) & \,\,\text{ on } & \Gamma_{j}, j=1,2,
\end{align*}
where
\begin{eqnarray*}
\tilde{a}_{ij}^{0}=\tilde{a}_{ij}(Du_{0}; D\varphi_{0}^{-};0),
\end{eqnarray*}
and the iteration terms $f(\dot{u};\varphi_{\rm w})$ and
$g_{j}(\dot{u};\varphi_{\rm w}), j=1,2$, will be specified
later in \S 7.
Note that
\begin{align*}
\nabla_{Du}G_{1}(Du;D\varphi_{\rm w})
& =\left(\frac{\partial(D\varphi)}{\partial(Du)}\right)^{\top}\nabla_{D\varphi}H_{\rm w}(D\varphi;D\varphi_{\rm w})
=-\frac{1}{\big(\partial_{y_{1}}u\big)^{2}}J^{\top}\nabla_{D\varphi}H_{\rm w},\\
\nabla_{Du}G_{2}(Du;D\varphi_{\rm w}) & =\left(\frac{\partial(D\varphi)}{\partial(Du)}\right)^{\top}
\nabla_{D\varphi}H_{\rm s}(D\varphi;D\varphi_{0}^{-})
=-\frac{1}{\big(\partial_{y_{1}}u\big)^{2}}J^{\top}\nabla_{D\varphi}H_{\rm s}.
\end{align*}
We have
\begin{eqnarray*}
&&\nabla_{Du}G_{1}(Du_{0};0)
 =  -\frac{1}{\big(\partial_{y_{1}}u_{0}\big)^{2}}J_{0}^{\top}\nabla_{D\varphi}H_{\rm w}(D\varphi_{0};0),\\
&&\nabla_{Du}G_{2}(Du_{0};0)
 = -\frac{1}{\big(\partial_{y_{1}}u_{0}\big)^{2}}J_{0}^{\top}\nabla_{D\varphi}H_{\rm s}(D\varphi_{0};D\varphi_{0}^{-}),
\end{eqnarray*}
where $\nabla_{D\varphi}H_{\rm w}(D\varphi_{0};0)=(0,1,0,\cdots,0)^{\top}$,
$\nabla_{D\varphi}H_{\rm s}(D\varphi_{0};D\varphi_{0}^{-})\defs\bnu=(\nu_{1},\cdots,\nu_{n})^{\top}$
which is exactly the outer unit normal of the shock balloon, and
\[
J_{0}\defs J(Du_{0};0)
=\frac{1}{q_{0}^{-}\cos\alpha_{\rm w}-q_{0}^{+}\cos\omega_{1}}
\begin{bmatrix}q_{0}^{-}\cos\alpha_{\rm w}-q_{0}^{+}\cos\omega_{1} & 0 & 0 & \cdots & 0\\
-q_{0}^{-}\sin\alpha_{\rm w} & 1 & 0 & \cdots & 0\\
0 & 0 & 1 & \cdots & 0\\
\vdots & \vdots & \vdots & \vdots & \vdots\\
0 & 0 & 0 & \cdots & 1
\end{bmatrix}.
\]

Therefore, in this section, we deal with the linear boundary value
problem of elliptic type ($u$ is still denoted as the unknown
function):
\begin{align}
 & \sum_{i,j=1}^{n}\tilde{a}_{ij}^{0}\partial_{y_{i}y_{j}}u
   =\hat{f} & \text{in }  \fd,\label{eq:linearized_eq_general}\\
 & -q_{0}^{-}\sin\alpha_{\rm w}\partial_{y_{1}}u+\partial_{y_{2}}u=\hat{g}_{1} & \,\,\text{\,\, on }  \Gamma_{1},
  \label{eq:linearized_bdry_wedge_general}\\
 & \left(Du\right)^{\top}\cdot J_{0}^{\top}\bnu=\hat{g}_{2} & \,\,\text{\,\, on }  \Gamma_{2}.
  \label{eq:linearized_bdry_shock_general}
\end{align}
Then Theorem \ref{thm:bvp_wellposedness_Laplace} can be employed to establish
the following well-posedness theorem for
problem \eqref{eq:linearized_eq_general}--\eqref{eq:linearized_bdry_shock_general}.

\begin{thm}\label{thm:wellposedness_linearized_problem}
Assume that
\begin{equation}
\frac{\nu_{1}}{\nu_{2}}>0.\label{eq:con_stability_weights}
\end{equation}
Then there exists a constant $\sigma_{\rm s}>0$ depending only on the parameters
of the unperturbed background transonic shock solution such that,
for any
\begin{equation}
-1<\sigma_{\infty}\leq0<\sigma_{0}<\sigma_{\rm s},\label{eq:double_weights}
\end{equation}
if
\[
\hat{f}\in\C_{\beta_{0},\beta_{\infty}}^{0,\alpha}(\fd),\qquad\hat{g}_{j}\in\C_{\beta_{0},\beta_{\infty}}^{1,\alpha}(\Gamma_{j}),\ j=1,2,
\]
with $\beta_{0}=1+\alpha-\sigma_{0}$ and $\beta_{\infty}=1+\alpha-\sigma_{\infty}$,
there exists a unique solution $u\in\C_{\beta_{0},\beta_{\infty}}^{2,\alpha}(\fd)$
to the boundary value problem \eqref{eq:linearized_eq_general}--\eqref{eq:linearized_bdry_shock_general}
satisfying the following estimate:
\begin{equation}
\norm{u}_{(2,\alpha;\fd)}^{(\beta_{0},\beta_{\infty})}
\leq C\Big(\|\hat{f}\|_{(0,\alpha;\fd)}^{(\beta_{0},\beta_{\infty})}
+\sum_{j=1}^{2}\|\hat{g}_{j}\|_{(1,\alpha;\Gamma_{j})}^{(\beta_{0},\beta_{\infty})}\Big).
\label{eq:estimate_linearised}
\end{equation}
\end{thm}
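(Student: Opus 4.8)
The plan is to reduce the constant-coefficient boundary value problem \eqref{eq:linearized_eq_general}--\eqref{eq:linearized_bdry_shock_general} to the model problem \eqref{eq:dihedral_Laplace}--\eqref{eq:dihedral_boundary_conditions} in a dihedral angle and then to invoke Theorem \ref{thm:bvp_wellposedness_Laplace}. The coefficients $\tilde{a}_{ij}^{0}=\tilde{a}_{ij}(Du_{0};D\varphi_{0}^{-};0)$ are constants, being evaluated on the uniform background state, and $\tilde{A}^{0}=[\tilde{a}_{ij}^{0}]=J_{0}^{\top}A(D\varphi_{0}^{+})J_{0}$ is symmetric and positive definite because $J_{0}$ is invertible and the background downstream flow $\varphi_{0}^{+}$ is subsonic; hence \eqref{eq:linearized_eq_general} is uniformly elliptic. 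I would first introduce the linear change of variables $\bz=T\by$, with $T=(\tilde{A}^{0})^{-1/2}$ followed by a rotation, chosen so that simultaneously $\sum_{i,j}\tilde{a}_{ij}^{0}\partial_{y_{i}y_{j}}$ becomes $\triangle_{\bz}$ and the edge $\fe=\{y_{1}=y_{2}=0\}$ is carried to $\{z_{1}=z_{2}=0\}$.

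Since any hyperplane through the origin containing the codimension-$2$ subspace $\{z_{1}=z_{2}=0\}$ has the form $az_{1}+bz_{2}=0$, the images of the faces $\Gamma_{1}$ and $\Gamma_{2}$ are automatically of the product form (a ray in the $(z_{1},z_{2})$-plane)$\,\times\Real^{n-2}$; a further rotation within the $(z_{1},z_{2})$-plane makes them symmetric about the $z_{1}$-axis, so that $T\fd=K\times\Real^{n-2}$ for a new angular domain $K$ of some opening angle $\omega_{*}$. Because $T$ and the rotations are linear isomorphisms fixing the edge set, with $\dist(\bz,T\fe)\asymp r_{\by}$, they induce isomorphisms of the weighted Hölder spaces with equivalent norms, so it suffices to solve the transformed problem. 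The two boundary operators in \eqref{eq:linearized_bdry_wedge_general}--\eqref{eq:linearized_bdry_shock_general} are first-order with constant coefficients, hence in the $\bz$-variables each is $\mathbf{b}\cdot D_{\bz}v$ for a constant vector $\mathbf{b}$; decomposing $\mathbf{b}$ along the inward normal $\bnu^{\pm}$ of $T\Gamma^{\pm}$, the tangent $\btau^{\pm}$, and $\Real^{n-2}$, and using that $\mathbf{b}\cdot\bnu^{\pm}\neq0$ — the obliqueness (complementing) condition, which holds for the physical problem — I would divide through to write each condition as $\partial_{\bnu^{\pm}}v+\alpha^{\pm}\partial_{\btau^{\pm}}v+\bc^{\pm}\cdot D_{\bx'}v=\tilde{g}^{\pm}$ with $\alpha^{\pm}\in\Real$, $\bc^{\pm}\in\Real^{n-2}$, which is exactly the form \eqref{eq:dihedral_boundary_conditions} permitted by Theorem \ref{thm:bvp_wellposedness_Laplace} (a nonzero $\bc^{\pm}$ being harmless there).

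With $\Phi=\arctan\alpha^{-}+\arctan\alpha^{+}$, the next step is the key computation: tracing the shock-polar geometry through the partial hodograph transformation and the normalization $T$, one shows that the hypothesis $\nu_{1}/\nu_{2}>0$ — which distinguishes the weak shock $B$ from the strong shock $A$ on the shock polar — forces $\Phi<-\omega_{*}$, equivalently that the first positive eigenvalue $-\Phi/\omega_{*}$ of the reduced angular problem (computed in the appendix) exceeds $1$; one may then take $\sigma_{\rm s}:=-\Phi/\omega_{*}-1>0$. For a strong shock one gets instead $\nu_{1}/\nu_{2}<0$, so the unique admissible strip of weights does not reach past $1$ and no such $\sigma_{\rm s}$ exists; this is precisely the $n\geq3$ obstruction discussed in \S 6, and it is the reason the solution has bounded velocity only in the weak case.

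Finally I would match the weights. Under $-1<\sigma_{\infty}\leq0<\sigma_{0}<\sigma_{\rm s}$, the identity $\beta=2+\alpha-\sigma$ relating the weights of Theorem \ref{thm:bvp_wellposedness_Laplace} to those of the present statement shows that $\hat{f}\in\C^{0,\alpha}_{\beta_{0},\beta_{\infty}}(\fd)$ and $\hat{g}_{j}\in\C^{1,\alpha}_{\beta_{0},\beta_{\infty}}(\Gamma_{j})$ correspond to data for the reduced problem with parameters $\sigma=1+\sigma_{0}\in(1,1+\sigma_{\rm s})$ and $\sigma=1+\sigma_{\infty}\in(0,1]$, both lying in the single admissible interval $(0,-\Phi/\omega_{*})$. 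The isomorphism part of Theorem \ref{thm:bvp_wellposedness_Laplace} then yields existence and uniqueness of $v$ in the corresponding single-weight space, and its double-weight (bootstrap) part upgrades $v$ to $\C^{2,\alpha}_{\beta_{0},\beta_{\infty}}$ together with the estimate \eqref{eq:estimate_bvp_Laplace_double_weights}; transporting $v$ back through $T$ and the rotations yields the solution $u\in\C^{2,\alpha}_{\beta_{0},\beta_{\infty}}(\fd)$ of \eqref{eq:linearized_eq_general}--\eqref{eq:linearized_bdry_shock_general} and the estimate \eqref{eq:estimate_linearised}. I expect the main obstacle to be the sign/size computation of the previous paragraph: establishing rigorously, via the explicit eigenvalue analysis of the appendix, that $\nu_{1}/\nu_{2}>0$ pushes the principal eigenvalue past $1$ so that $\sigma_{\rm s}>0$, together with the verification that the transformed boundary operators indeed satisfy the obliqueness condition $\mathbf{b}\cdot\bnu^{\pm}\neq0$ needed to apply Theorem \ref{thm:bvp_wellposedness_Laplace}.
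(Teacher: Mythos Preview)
Your approach is essentially the paper's: a constant linear change of variables reducing \eqref{eq:linearized_eq_general}--\eqref{eq:linearized_bdry_shock_general} to the model problem of Theorem~\ref{thm:bvp_wellposedness_Laplace}, followed by the key sign computation linking $\nu_{1}/\nu_{2}>0$ to the admissible weight interval, and finally the weight bookkeeping (your shift $\sigma\mapsto 1+\sigma$ between the conventions $\beta=2+\alpha-\sigma$ and $\beta=1+\alpha-\sigma$ is exactly right). The paper simply carries out explicitly what you outline abstractly: it takes $P=A_{0}^{-1/2}(J_{0}^{-1})^{\top}$ (and a further block $P_{0}$ in the non-perpendicular case), computes the opening angle $\omega_{\rm s}$ and the obliqueness angle $\Phi_{\rm s}$ with $\tan\Phi_{\rm s}=(1-M^{2}\cos^{2}\omega_{1})^{-1/2}\,\nu_{1}/\nu_{2}$, and sets $\sigma_{\rm s}=\Phi_{\rm s}/\omega_{\rm s}$; this matches your $\sigma_{\rm s}=-\Phi/\omega_{*}-1$ once one checks that on the wedge face the transformed condition is pure Neumann ($\alpha^{-}=0$) and on the shock face $\alpha^{+}=-\tan(\omega_{\rm s}+\Phi_{\rm s})$, so $-\Phi/\omega_{*}=1+\Phi_{\rm s}/\omega_{\rm s}$. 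The explicit calculation also makes the obliqueness $\mathbf{b}\cdot\bnu^{\pm}\neq0$ evident, so the obstacle you flag is resolved by direct computation rather than any additional idea.
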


\begin{rem}
For the weak transonic shock solution represented by point $B$ on the shock polar,
condition \eqref{eq:con_stability_weights} holds.
However, $\displaystyle\frac{\nu_{1}}{\nu_{2}}<0 $ for the strong transonic shock solution represented by point $A$,
and $\displaystyle\frac{\nu_{1}}{\nu_{2}}= 0$ when point $A$ coincides with point $B$.
That is, condition \eqref{eq:con_stability_weights} does not hold for these two cases.
See Figures \ref{fig:criterion_1}--\ref{fig:criterion_2}.
\end{rem}

We remark that, for the M-D case, if the incoming supersonic
flow is perpendicular to the edge, the background shock solution is
the same as the shock solution for the 2-D flow.
However, there are differences between the M-D flow and 2-D flow,
so that it is worth of dealing with the perpendicular case independently,
for later comparison between these two cases.

\subsection{The case that the incoming supersonic flow is perpendicular
to the edge}

In this case, $\omega_{1}=0$ (see Fig. \ref{fig:orthorgnal_shock}),
and the corresponding potential functions are
\begin{eqnarray*}
&&\varphi_{0}^{-}(\bx)  =  x_{1}q_{0}^{-}\cos\alpha_{\rm w}-x_{2}q_{0}^{-}\sin\alpha_{\rm w},\\
&&\varphi_{0}^{+}(\bx)  =  x_{1}q_{0}^{+}.
\end{eqnarray*}

Then we have
\[
\varphi_{0}(\bx)=\varphi_{0}^{-}(\bx)-\varphi_{0}^{+}(\bx)
=x_{1}\left(q_{0}^{-}\cos\alpha_{\rm w}-q_{0}^{+}\right)-x_{2}q_{0}^{-}\sin\alpha_{\rm w},
\]
and the corresponding partial hodograph transformation becomes
\[
\fp_{0}\bx=\by=(y_{1,}y_{2},\by')^{\top}\defs (\varphi_{0}(\bx),\ x_{2},\ \bx')^{\top},
\]
with its inverse
\[
\fp_{0}^{-1}\by=\bx=(x_{1,}x_{2},\bx')^{\top}\defs (u_{0}(\by),\ y_{2},\ \by')^{\top},
\]
where
\[
u_{0}(\by)=\frac{1}{q_{0}^{-}\cos\alpha_{\rm w}-q_{0}^{+}}\left(y_{1}+y_{2}q_{0}^{-}\sin\alpha_{\rm w}\right).
\]

\begin{figure}
\centering
\includegraphics[width=200pt]{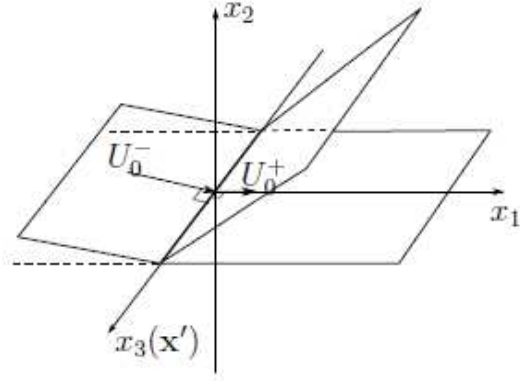}
\caption{The uniform incoming flow is perpendicular to the edge
\label{fig:orthorgnal_shock}}
\end{figure}

Let
\[
A_{0}=A(D\varphi_{0}^{+})=[a_{ij}^{0}]_{n\times n}=(c_{0}^{+})^{2}
\begin{bmatrix}
1-(M_{0}^{+})^{2} & 0 & \cdots & 0\\
0 & 1 & \cdots & 0\\
\vdots & \vdots & \vdots & \vdots\\
0 & 0 & \cdots & 1
\end{bmatrix},
\]
and
\[
J_{0}=J(Du_{0};0)=\frac{1}{q_{0}^{-}\cos\alpha_{\rm w}-q_{0}^{+}}
\begin{bmatrix}q_{0}^{-}\cos\alpha_{\rm w}-q_{0}^{+} & 0 & \cdots & 0\\
-q_{0}^{-}\sin\alpha_{\rm w} & 1 & \cdots & 0\\
\vdots & \vdots & \vdots & \vdots\\
0 & 0 & \cdots & 1
\end{bmatrix}.
\]
Then, in equation \eqref{eq:linearized_eq_general},
\[
\tilde{A}_{0}
\defs\begin{bmatrix}\tilde{a}_{ij}^{0}\end{bmatrix}_{n\times n}
=J_{0}^{\top}A_{0}J_{0}.
\]
Moreover, in the boundary
condition \eqref{eq:linearized_bdry_shock_general},
the unit normal $\bnu=(\nu_{1},\nu_{2},0,\cdots,0)^{\top}$.

Let
\[
Y=P\by,
\]
where $P=A_{0}^{-\frac{1}{2}}(J_{0}^{-1})^{\top}$
is a nonsingular matrix, with
\begin{eqnarray*}
A_{0}^{-\frac{1}{2}} & = & \frac{1}{c_{0}^{+}}\begin{bmatrix}\frac{1}{\sqrt{1-(M_{0}^{+})^{2}}} & 0 & \cdots & 0\\
0 & 1 & \cdots & 0\\
\vdots & \vdots & \vdots & \vdots\\
0 & 0 & \cdots & 1
\end{bmatrix},\\
J_{0}^{-1} & = & \left(q_{0}^{-}\cos\alpha_{\rm w}-q_{0}^{+}\right)
\begin{bmatrix}\frac{1}{q_{0}^{-}\cos\alpha_{\rm w}-q_{0}^{+}} & 0 & 0 & \cdots & 0\\
\frac{q_{0}^{-}\sin\alpha_{\rm w}}{q_{0}^{-}\cos\alpha_{\rm w}-q_{0}^{+}} & 1 & 0 & \cdots & 0\\
0 & 0 & 1 & \cdots & 0\\
\vdots & \vdots & \vdots & \vdots & \vdots\\
0 & 0 & 0 & \cdots & 1
\end{bmatrix}.
\end{eqnarray*}
Then
\[
\frac{\partial Y}{\partial\by}=\left[\partial_{y_{j}}Y_{i}\right]_{n\times n}=P.
\]
For $u(\by)=u(Y(\by))$,
\[
D_{\by}u=\left[\partial_{y_{i}}u\right]_{n\times1}=P^{\top}\left[\partial_{Y_{i}}u\right]_{n\times1}=P^{\top}D_{Y}u,
\]
and
\[
D_{\by}^{2}u=\left[\partial_{y_{i}y_{j}}u\right]_{n\times n}=P^{\top}D_{Y}^{2}uP.
\]
Thus we have
\begin{eqnarray*}
\sum_{i,j=1}^{n}\tilde{a}_{ij}^{0}\partial_{y_{i}y_{j}}u
& = & \tr(\tilde{A}_{0}^{\top}D_{\by}^{2}u)
  =\tr(J_{0}^{\top}A_{0}J_{0}\cdot P^{\top}D_{Y}^{2}uP)\\
 & = & \tr(PJ_{0}^{\top}A_{0}J_{0}P^{\top}\cdot D_{Y}^{2}u)\\
 & = & \triangle_{Y}u.
\end{eqnarray*}
Then equation \eqref{eq:linearized_eq_general} becomes
\begin{equation}
\triangle_{Y}u=\hat{f}.
\label{eq:linearized_eq}
\end{equation}

The boundaries $\Gamma_{1}$ and $\Gamma_{2}$ become
\begin{eqnarray*}
&&\overline{\Gamma}_{1}  = \set{Y_{2}=0,\ Y_{1}>0,\ Y'\in\Real^{n-2}},\\
&&\overline{\Gamma}_{2}  =  \set{Y_{2}=\tan\omega_{\rm s}\, Y_{1},\ Y_{1}>0,\ Y'\in\Real^{n-2}},
\end{eqnarray*}
where
\[
\tan\omega_{\rm s}=\frac{q_{0}^{-}\cos\alpha_{\rm w}-q_{0}^{+}}{q_{0}^{-}\sin\alpha_{\rm w}}\sqrt{1-(M_{0}^{+})^{2}}.
\]
The boundary condition \eqref{eq:linearized_bdry_wedge_general} becomes
\begin{equation}
\partial_{Y_{2}}u=\overline{g}_{1}\defs\frac{c_{0}^{+}}{q_{0}^{-}\cos\alpha_{w}-q_{0}^{+}}\hat{g}_{1},
\label{eq:linearized_bdry_wedge}
\end{equation}
and the boundary condition \eqref{eq:linearized_bdry_shock_general}
becomes
\begin{equation}
\frac{\nu_{1}}{\sqrt{1-(M_{0}^{+})^{2}}}\partial_{Y_{1}}u+\nu_{2}\partial_{Y_{2}}u
=\overline{g}_{2}\defs c_{0}^{+}\hat{g}_{2},
\label{eq:linearized_bdry_shock}
\end{equation}
which can be rewritten under the polar coordinates for $(Y_{1},Y_{2})$ as
\[
\frac{1}{r}\partial_{\omega}u+\tan(\omega_{\rm s}+\Phi_{\rm s})\partial_{r}u
=\frac{\cos\Phi_{\rm s}}{\cos(\omega_{\rm s} +\Phi_{\rm s})}\frac{\overline{g}_2}{\nu_2},
\]
where
\[
\tan\Phi_{\rm s}=\frac{1}{\sqrt{1-(M_{0}^{+})^{2}}}\,\frac{\nu_{1}}{\nu_{2}}.
\]

\begin{figure}
\centering
\includegraphics[width=300pt]{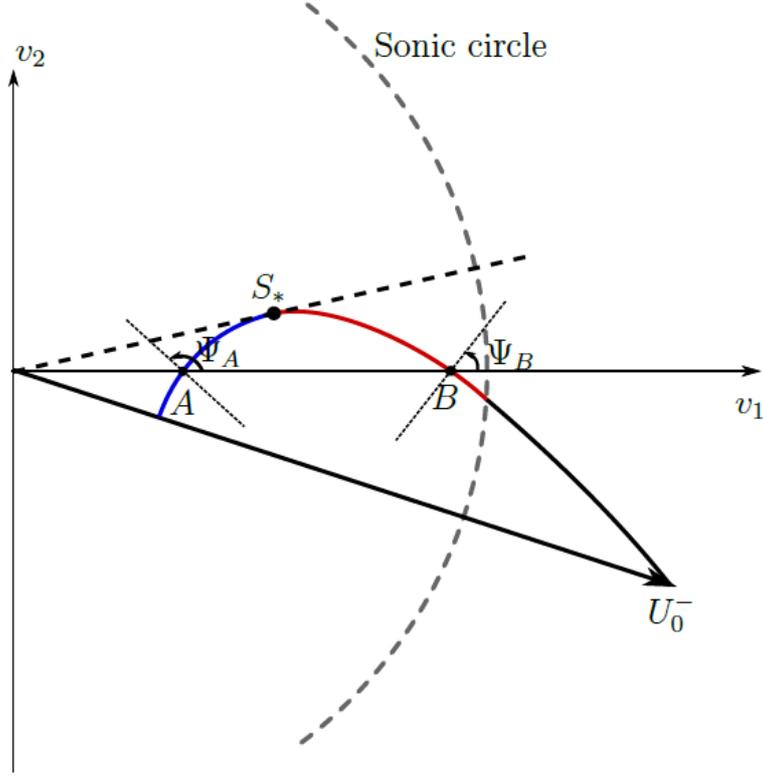}
\caption{Condition \eqref{eq:con_stability_shock_polar} and the shock
polar: Perpendicular cases
\label{fig:criterion_1}}
\end{figure}

\begin{rem}
\label{rem:criterion_1}
Applying Theorem \ref{thm:bvp_wellposedness_Laplace}, we conclude that
problem \eqref{eq:linearized_eq}--\eqref{eq:linearized_bdry_shock}
can be well-posed in the weighted H\"older space
$\C_{\beta}^{2,\alpha}(\fd) $ for any admissible weight $\beta\defs 1+\alpha-\sigma$
with $\sigma$ satisfying
\[
-1<\sigma<\frac{\Phi_{\rm s}}{\omega_{\rm s}}.
\]
On the other hand, we also need solution $u$ to be physically reasonable such that the velocity is bounded.
Then $Du$ should be bounded in $\fd$.
Therefore, there exist valid admissible weights $\beta$ which are further applicable to our stability problem,
only when constant $\sigma_{\rm s}\defs\frac{\Phi_{\rm s}}{\omega_{\rm s}}$ satisfies the condition:
\begin{equation}
\sigma_{\rm s}>0,
\label{eq:con_stability}
\end{equation}
that is,
\begin{equation}
\Phi_{\rm s}>0,\qquad\text{or equivalently}\quad\frac{\nu_{1}}{\nu_{2}}>0.
\label{eq:con_stability_shock_polar}
\end{equation}
Let $\Psi=\arcctg(\frac{\nu_{1}}{\nu_{2}})$.
Then condition \eqref{eq:con_stability_shock_polar} yields that
\begin{equation}
0<\Psi<\frac{\pi}{2}.
\label{eq:con_stability_shock_polar_angle}
\end{equation}
We remark that $\Psi$
equals the angle between the velocity vector and the outer normal
of the shock
polar (see Fig. \ref{fig:criterion_1}).
Then we can observe that, for the strong transonic shock solution represented by point $A$
on the shock polar,
\[
\frac{\pi}{2}<\Psi_{A}<\pi;
\]
while, for the weak transonic shock solution represented by point
$B$,
\[
0<\Psi_{B}<\frac{\pi}{2}.
\]
This means that, via this analysis, the following well-posedness theorem,
which is a direct consequence of Theorem \ref{thm:bvp_wellposedness_Laplace},
can be established only for weak transonic shocks, that is, the
shock solution represented by point $B$.
\end{rem}

\begin{thm}
\label{thm:wellposedness_Laplacian}
Suppose that \eqref{eq:con_stability} holds. Let
\begin{equation}
-1<\sigma_{\infty}\leq0<\sigma_{0}<\sigma_{\rm s}.
\label{eq:double_weights_perpendicular}
\end{equation}
Assume that
\[
\hat{f}\in\C_{\beta_{0},\beta_{\infty}}^{0,\alpha}(\fd),
\qquad\overline{g}_{j}\in\C_{\beta_{0},\beta_{\infty}}^{1,\alpha}(\Gamma_{j}),\ j=1,2,
\]
where $\beta_{0}=1+\alpha-\sigma_{0}$ and $\beta_{\infty}=1+\alpha-\sigma_{\infty}$.
Then there exists a unique solution $u\in\C_{\beta_{0},\beta_{\infty}}^{2,\alpha}(\fd)$
to the boundary value problem \eqref{eq:linearized_eq}--\eqref{eq:linearized_bdry_shock}
satisfying the following estimate:
\begin{equation}
\norm{u}_{(2,\alpha;\fd)}^{(\beta_{0},\beta_{\infty})}
\leq C\Big(\|\hat{f}\|_{(0,\alpha;\fd)}^{(\beta_{0},\beta_{\infty})}
+\sum_{j=1}^{2}\|\overline{g}_{j}\|_{(1,\alpha;\Gamma_{j})}^{(\beta_{0},\beta_{\infty})}\Big).
\label{eq:estimate_bvp_linearised}
\end{equation}
\end{thm}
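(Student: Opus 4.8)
The plan is to obtain Theorem~\ref{thm:wellposedness_Laplacian} as a direct corollary of the dihedral-angle well-posedness result Theorem~\ref{thm:bvp_wellposedness_Laplace}, by matching the reduced problem \eqref{eq:linearized_eq}--\eqref{eq:linearized_bdry_shock} with the model problem \eqref{eq:dihedral_Laplace}--\eqref{eq:dihedral_boundary_conditions}. First I would note that, as shown above, the linear change of variables $Y=P\by$ turns \eqref{eq:linearized_eq_general}--\eqref{eq:linearized_bdry_shock_general} into \eqref{eq:linearized_eq}--\eqref{eq:linearized_bdry_shock} on the cone $\set{0<\omega<\omega_{\rm s}}\times\Real^{n-2}$ (polar angle of $(Y_1,Y_2)$); since $P$ is block-diagonal with invertible blocks on the $(y_1,y_2)$-plane and on the $\by'$-subspace, one has $r_Y\asymp r_{\by}$, so the pullback by $P$ is an isomorphism of all the weighted H\"older spaces $\C^{\ell,\alpha}_{\beta}(\cdot)$ with norms equivalent up to fixed constants, and $\overline g_j$, $\hat g_j$ differ by nonzero constant factors. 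Hence it suffices to prove existence, uniqueness, and the estimate for \eqref{eq:linearized_eq}--\eqref{eq:linearized_bdry_shock}, after a further rotation of the $(Y_1,Y_2)$-plane that centers the cone so that it becomes $\set{\abs{\omega}<\omega_{\rm s}/2}\times\Real^{n-2}$, i.e.\ $\omega_*=\omega_{\rm s}$ in the notation of Theorem~\ref{thm:bvp_wellposedness_Laplace}.

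Next I would read off the coefficients of the two oblique-derivative operators. On $\Gamma_1$ the condition \eqref{eq:linearized_bdry_wedge} is the pure inward normal derivative, so $\alpha^-=0$ and $\bc^-=\mathbf 0$; on $\Gamma_2$, the polar form of \eqref{eq:linearized_bdry_shock} exhibits the ratio of the tangential ($\partial_r$) to the normal ($\frac{1}{r}\partial_\omega$) derivative as $\tan(\omega_{\rm s}+\Phi_{\rm s})$, whence, taking care of the orientation conventions of Fig.~\ref{fig:angular_domain} ($\bnu^\pm$ inward, $\btau^\pm$ pointing from $\fe$ into $\fd$), $\arctan\alpha^+=-(\omega_{\rm s}+\Phi_{\rm s})$ and $\bc^+=\mathbf 0$. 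Therefore $\Phi=\arctan\alpha^-+\arctan\alpha^+=-(\omega_{\rm s}+\Phi_{\rm s})$, so $-\Phi/\omega_*=1+\Phi_{\rm s}/\omega_{\rm s}=1+\sigma_{\rm s}$. This identity, together with the stability hypothesis \eqref{eq:con_stability} (i.e.\ $\sigma_{\rm s}>0$, equivalently $\nu_1/\nu_2>0$), is where the shock-polar geometry enters the argument, and verifying the sign in $\arctan\alpha^+=-(\omega_{\rm s}+\Phi_{\rm s})$ from the boundary orientations is the one genuinely delicate point of the proof; everything else is bookkeeping.

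Finally I would match the two weight normalizations and invoke Theorem~\ref{thm:bvp_wellposedness_Laplace}. Since $\C^{\ell,\alpha}_{1+\alpha-\sigma}$ coincides with the space $\C^{\ell,\alpha}_{2+\alpha-\sigma'}$ of Theorem~\ref{thm:bvp_wellposedness_Laplace} precisely when $\sigma'=1+\sigma$, the admissible-weight condition \eqref{eq:con_weights} for the latter translates, for our $\Phi$, into $-1<\sigma<\sigma_{\rm s}$. Given $-1<\sigma_\infty\le0<\sigma_0<\sigma_{\rm s}$, I apply the double-weight part of Theorem~\ref{thm:bvp_wellposedness_Laplace} with $\sigma_1'=1+\sigma_0$ and $\sigma_2'=1+\sigma_\infty$: then $\beta_1'=2+\alpha-\sigma_1'=1+\alpha-\sigma_0=\beta_0$ and $\beta_2'=1+\alpha-\sigma_\infty=\beta_\infty$, while $0<1+\sigma_\infty\le1<1+\sigma_0<1+\sigma_{\rm s}=-\Phi/\omega_*$ shows both $\sigma_1',\sigma_2'$ satisfy \eqref{eq:con_weights}. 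Theorem~\ref{thm:bvp_wellposedness_Laplace} then provides the $\beta_0$- and $\beta_\infty$-isomorphisms and the double-weight solvability with the estimate \eqref{eq:estimate_bvp_Laplace_double_weights}; transporting everything back through the change of variables yields existence, uniqueness in $\C^{2,\alpha}_{\beta_0,\beta_\infty}(\fd)\subset\C^{2,\alpha}_{\beta_0}(\fd)$ (from injectivity of the $\beta_0$-isomorphism), and the estimate \eqref{eq:estimate_bvp_linearised}, completing the proof of Theorem~\ref{thm:wellposedness_Laplacian}.
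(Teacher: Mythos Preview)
Your proposal is correct and follows exactly the route the paper indicates: the paper states (in Remark~\ref{rem:criterion_1}) that Theorem~\ref{thm:wellposedness_Laplacian} is ``a direct consequence of Theorem~\ref{thm:bvp_wellposedness_Laplace}'' with the admissible range $-1<\sigma<\sigma_{\rm s}$, without spelling out the matching. You supply precisely that matching---the block structure of $P$ giving $r_Y\asymp r_{\by}$ and hence norm equivalence, the identification $\alpha^-=0$, $\arctan\alpha^+=-(\omega_{\rm s}+\Phi_{\rm s})$ so that $-\Phi/\omega_*=1+\sigma_{\rm s}$, and the weight shift $\sigma'=1+\sigma$ translating \eqref{eq:con_weights} into $-1<\sigma<\sigma_{\rm s}$---which is exactly the computation implicit in the paper's assertion (and made explicit for the eigenvalue set in Lemma~8.1 of \S8).
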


Then Theorem \ref{thm:wellposedness_linearized_problem}
is a direct consequence of Theorem \ref{thm:wellposedness_Laplacian}.

\subsection{The general case that the incoming supersonic flow may not
be perpendicular to the edge}

In this case, $\abs{\omega_{1}}<\frac{\pi}{2}$ (see Fig. \ref{fig:nonorthorgnal_shock}).
Taking $P=\left(J_{0}^{-1}\right)^{\top}$, equation \eqref{eq:linearized_eq_general}
becomes
\begin{equation}
\begin{split}
\left(1-M^{2}\cos^{2}\omega_{1}\right)\partial_{Y_{1}Y_{1}}u-2M^{2}\cos\omega_{1}\cos\omega_{3}\partial_{Y_{1}Y_{3}}u\\
+\left(1-M^{2}\cos^{2}\omega_{3}\right)\partial_{Y_{3}Y_{3}}u+\partial_{Y_{2}Y_{2}}u+\sum_{i=4}^{n}\partial_{Y_{i}Y_{i}}u
& =\frac{\hat{f}}{(c_0^+)^2},
\end{split}
\label{eq:linearized_eq-1}
\end{equation}
where $M=\frac{q_{0}^{+}}{c_{0}^{+}}$.
The boundaries $\Gamma_{1}$ and $\Gamma_{2}$
become
\begin{eqnarray*}
&&\overline{\Gamma}_{1} = \set{Y_{2}=0,\ Y_{1}>0,\ Y_{3}\in\Real},\\
&&\overline{\Gamma}_{2} = \set{Y_{2}=\tan\omega_{\rm s}\, Y_{1},\ Y_{1}>0,\ Y_{3}\in\Real},
\end{eqnarray*}
where
\[
\tan\omega_{\rm s}=\frac{q_{0}^{-}\cos\alpha_{\rm w}-q_{0}^{+}\cos\omega_{1}}{q_{0}^{-}\sin\alpha_{\rm w}}.
\]

The boundary condition \eqref{eq:linearized_bdry_wedge_general} becomes
\begin{equation}
\partial_{Y_{2}}u=\overline{g}_{1}\defs\frac{\hat{g}_{1}}{q_{0}^{-}\cos\alpha_{\rm w}-q_{0}^{+}\cos\omega_{1}},
\label{eq:linearized_bdry_wedge-1}
\end{equation}
and the boundary condition \eqref{eq:linearized_bdry_shock_general}
becomes
\begin{equation}
\nu_{1}\partial_{Y_{1}}u+\nu_{2}\partial_{Y_{2}}u+\nu_{3}\partial_{Y_{3}}u
=\overline{g}_{2}\defs\hat{g}_{2}.
\label{eq:linearized_bdry_shock-1}
\end{equation}

Now we rewrite the operator in equation \eqref{eq:linearized_eq-1}
into the Laplacian. Let
\[
P_{0}=\begin{bmatrix}
\frac{1}{\sqrt{1-M^{2}\cos^{2}\omega_{1}}} & 0 & 0 & 0\\
0 & 1 & 0 & 0\\
\frac{M^{2}\cos\omega_{1}\cos\omega_{3}}{\sqrt{\left(1-M^{2}\right)\left(1-M^{2}\cos^{2}\omega_{1}\right)}} & 0 & \sqrt{\frac{1-M^{2}\cos^{2}\omega_{1}}{1-M^{2}}} & 0\\
0 & 0 & 0 & I_{n-3}
\end{bmatrix}_{n\times n},
\]
and
\[
X=P_{0}Y.
\]
Then equation \eqref{eq:linearized_eq-1} becomes
\begin{equation}
\triangle_{X}u=\hat{f},\label{eq:linearized_eq-2}
\end{equation}
and the boundaries $\overline{\Gamma}_{1}$ and $\overline{\Gamma}_{2}$
become
\begin{eqnarray*}
&&\widetilde{\Gamma}_{1} = \set{X_{2}=0,\ X_{1}>0,\ X'\in\Real^{n-2}},\\
&&\widetilde{\Gamma}_{2} = \set{X_{2}=\tan\widetilde{\omega}_{\rm s}\, X_{1},\ X_{1}>0,\ X'\in\Real^{n-2}}
\end{eqnarray*}
with
\[
\tan\widetilde{\omega}_{\rm s}=\frac{q_{0}^{-}\cos\alpha_{\rm w}-q_{0}^{+}\cos\omega_{1}}{q_{0}^{-}\sin\alpha_{\rm w}}\sqrt{1-M^{2}\cos^{2}\omega_{1}},
\]
the boundary condition \eqref{eq:linearized_bdry_wedge-1} becomes
\begin{equation}
\partial_{X_{2}}u=\overline{g}_{1},
\label{eq:linearized_bdry_wedge-2}
\end{equation}
and the boundary condition \eqref{eq:linearized_bdry_shock-1} becomes
\begin{equation}
\widetilde{\nu_{1}}\partial_{X_{1}}u+\widetilde{\nu_{2}}\partial_{X_{2}}u
+\widetilde{\nu_{3}}\partial_{X_{3}}u
=\overline{g}_{2},
\label{eq:linearized_bdry_shock-2}
\end{equation}
where
\[
\widetilde{\bnu}=\begin{bmatrix}\widetilde{\nu_{1}}\\[1mm]
\widetilde{\nu_{2}}\\[1mm]
\widetilde{\nu_{3}}\\[1mm]
0\\[1mm]
\vdots\\[1mm]
0
\end{bmatrix}=P_{0}\bnu
=\begin{bmatrix}{\displaystyle \frac{\nu_{1}}{\sqrt{1-M^{2}\cos^{2}\omega_{1}}}}\\[1mm]
{\displaystyle \nu_{2}}\\[1mm]
{\displaystyle \frac{\sqrt{1-M^{2}\cos^{2}\omega_{1}}}{\sqrt{1-M^{2}}}
\Big(\frac{M^{2}\cos\omega_{1}\cos\omega_{3}}{1-M^{2}\cos^{2}\omega_{1}}\nu_{1}+\nu_{3}\Big)}\\[1mm]
0\\[1mm]
\vdots\\[1mm]
0
\end{bmatrix}.
\]

The boundary condition can be rewritten under the polar coordinates
for $\left(X_{1},X_{2}\right)$ as
\begin{equation}
\label{eq:linearized_bdry_shock-2-1}
\frac{1}{r}\partial_{\omega}u+\tan(\widetilde{\omega}_{\rm s}+\widetilde{\Phi}_{\rm s})\partial_{r}u
+\frac{\widetilde{\nu}_{3}}{\widetilde{\nu}_{2}}\partial_{X_{3}}u
=\frac{\widetilde{\Phi}_{\rm s}}{\cos(\widetilde{\omega}_{\rm s}+\widetilde{\Phi}_{\rm s})}\frac{\overline{g}_{2}}{\widetilde{\nu}_{2}},
\end{equation}
where
\[
\tan\widetilde{\Phi}_{\rm s}=\frac{1}{\sqrt{1-M^{2}\cos^{2}\omega_{1}}}\frac{\nu_{1}}{\nu_{2}}.
\]

\begin{rem}\label{rem:criterion_2}
By Theorem \ref{thm:bvp_wellposedness_Laplace}, we conclude
that problem \eqref{eq:linearized_eq-2}--\eqref{eq:linearized_bdry_shock-2-1}
can be well-posed in the weighted H\"older space $ \C_{\beta}^{2,\alpha}(\fd) $ for any admissible weight $ \beta\defs 1+\alpha-\sigma$ with $ \sigma $ satisfying
\[
-1<\sigma<\frac{\widetilde{\Phi}_{\rm s}}{\widetilde{\omega}_{\rm s}}.
\]
On the other hand, we also need solution $u$ to be physically reasonable such that the velocity
is bounded. Then $Du$ should be bounded in $ \fd $.
Therefore, there exist valid admissible weights $\beta$ which are further applicable to our stability problem,
only when constant $\widetilde{\sigma}_{\rm s}\defs\frac{\widetilde{\Phi}_{\rm s}}{\widetilde{\omega}_{\rm s}}$
satisfies the condition:
\begin{equation}
\widetilde{\sigma}_{\rm s}>0,
\label{eq:con_stability-1}
\end{equation}
that is,
\begin{equation}
\widetilde{\Phi}_{\rm s}>0,\qquad\text{or equivalently,}\qquad\frac{\nu_{1}}{\nu_{2}}>0.
\label{eq:con_stability_shock_polar-1}
\end{equation}
Let
\[
\Psi\defs\arcctg(\frac{\nu_{1}}{\nu_{2}}).
\]
Then \eqref{eq:con_stability_shock_polar-1} yields that
\begin{equation}
0<\Psi<\frac{\pi}{2}.\label{eq:con_stability_shock_polar_angle-1}
\end{equation}
If the shock polar is projected onto the $(v_{1}, v_{2})$--plane
(see Fig. \ref{fig:criterion_2}), then $\Psi$ is the exact angle between
the projection of the velocity behind the shock-front and the projection
of the outer normal of the shock balloon.
Moreover, we can observe that, for the strong transonic shock solution represented by point $A$
on the shock polar,
\[
\frac{\pi}{2}<\Psi_{A}<\pi;
\]
while, for the weak transonic shock solution represented by point
$B$,
\[
0<\Psi_{B}<\frac{\pi}{2}.
\]
This also means that, via this analysis, the following well-posedness
theorem,
which is a direct consequence of Theorem \ref{thm:bvp_wellposedness_Laplace},
can only be established for weak transonic shocks, that is, the
shock solution represented by point $B$.
Therefore, we have the following similar theorem
to Theorem \ref{thm:wellposedness_Laplacian}.
\end{rem}

\bigskip

\begin{figure}
\centering
\includegraphics[width=300pt]{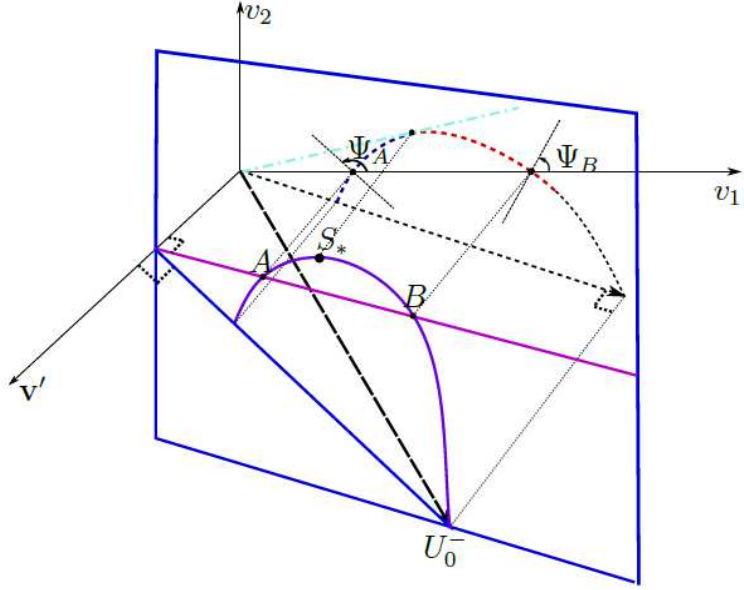}
\caption{Condition \eqref{eq:con_stability_shock_polar-1} and the shock
polar: General cases
\label{fig:criterion_2}}
\end{figure}

\begin{thm}\label{thm:wellposedness_Laplacian-1}
Suppose that \eqref{eq:con_stability_shock_polar-1} holds. Let
\begin{equation}
-1<\sigma_{\infty}\leq0<\sigma_{0}<\widetilde{\sigma}_{\rm s}.
\label{eq:double_weights_non-perpendicular}
\end{equation}
Assume that
\[
\hat{f}\in\C_{\beta_{0},\beta_{\infty}}^{0,\alpha}(\fd),
\qquad
\overline{g}_{j}\in\C_{\beta_{0},\beta_{\infty}}^{1,\alpha}(\Gamma_{j}),\ j=1,2,
\]
where $\beta_{0}=1+\alpha-\sigma_{0}$ and $\beta_{\infty}=1+\alpha-\sigma_{\infty}$.
Then there exists a unique solution $u\in\C_{\beta_{0},\beta_{\infty}}^{2,\alpha}(\fd)$
to the boundary value problem \eqref{eq:linearized_eq_general}--\eqref{eq:linearized_bdry_shock_general}
satisfying the following estimate:
\begin{equation}
\|u\|_{(2,\alpha;\fd)}^{(\beta_{0},\beta_{\infty})}\leq C\Big(\|\hat{f}\|_{(0,\alpha;\fd)}^{(\beta_{0},\beta_{\infty})}
+\sum_{j=1}^{2}\|\overline{g}_{j}\|_{(1,\alpha;\Gamma_{j})}^{(\beta_{0},\beta_{\infty})}\Big).
\label{eq:estimate_bvp_linearised-1}
\end{equation}
\end{thm}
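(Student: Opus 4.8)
The plan is to reduce the linear problem \eqref{eq:linearized_eq_general}--\eqref{eq:linearized_bdry_shock_general} to the model problem \eqref{eq:dihedral_Laplace}--\eqref{eq:dihedral_boundary_conditions} of Theorem \ref{thm:bvp_wellposedness_Laplace} by the explicit constant-coefficient substitution already set up in this subsection, apply that theorem, and then transfer the conclusion back. First I would perform the linear change of variables $X=P_{0}(J_{0}^{-1})^{\top}\by$, which is invertible with constant coefficients, maps the first-quadrant dihedral domain $\set{y_{1}>0,\ y_{2}>0}\times\Real^{n-2}$ onto the sector-type dihedral domain $\set{0<\omega<\widetilde{\omega}_{\rm s}}\times\Real^{n-2}$, and carries $\Gamma_{1},\Gamma_{2}$ onto $\widetilde{\Gamma}_{1},\widetilde{\Gamma}_{2}$ and the edge $\set{y_{1}=y_{2}=0}$ onto $\set{X_{1}=X_{2}=0}$. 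As already computed above, \eqref{eq:linearized_eq_general} becomes the Poisson equation \eqref{eq:linearized_eq-2}, while \eqref{eq:linearized_bdry_wedge_general}--\eqref{eq:linearized_bdry_shock_general} become \eqref{eq:linearized_bdry_wedge-2}--\eqref{eq:linearized_bdry_shock-2}; composing with a fixed rotation in the $(X_{1},X_{2})$-plane, under which $\triangle_{X}$ and all the weighted H\"older norms are invariant, I may take the sector to be the symmetric one $K=\set{\abs{\omega}<\omega_{*}/2}$ of \S 4 with $\omega_{*}=\widetilde{\omega}_{\rm s}\in(0,\pi/2)$.

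Next I would identify the two boundary operators in the normal form $\partial_{\bnu^{\pm}}+\alpha^{\pm}\partial_{\btau^{\pm}}+\bc^{\pm}\cdot D_{\bx'}$ required by Theorem \ref{thm:bvp_wellposedness_Laplace}. On $\widetilde{\Gamma}_{1}$ the condition \eqref{eq:linearized_bdry_wedge-2} is the pure inward normal derivative $\partial_{X_{2}}u$, so there $\alpha=0$ and $\bc=0$, hence $\arctan\alpha=0$. On $\widetilde{\Gamma}_{2}$, writing \eqref{eq:linearized_bdry_shock-2} in polar coordinates as in \eqref{eq:linearized_bdry_shock-2-1}, the operator has obliqueness coefficient $\arctan\alpha=-(\widetilde{\omega}_{\rm s}+\widetilde{\Phi}_{\rm s})$ and edge-tangential part $\bc\cdot D_{X'}$ with $\bc=(\widetilde{\nu}_{3}/\widetilde{\nu}_{2},0,\cdots,0)$, which by Theorem \ref{thm:bvp_wellposedness_Laplace} does not affect the admissible range of weights. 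Thus, for the reduced problem, $\Phi=\arctan\alpha^{-}+\arctan\alpha^{+}=-(\widetilde{\omega}_{\rm s}+\widetilde{\Phi}_{\rm s})$ and $-\Phi/\omega_{*}=1+\widetilde{\Phi}_{\rm s}/\widetilde{\omega}_{\rm s}=1+\widetilde{\sigma}_{\rm s}$, which is $>1$ precisely because hypothesis \eqref{eq:con_stability_shock_polar-1} (equivalently $\widetilde{\Phi}_{\rm s}>0$) holds; this is the point at which the geometry of the shock polar forces the restriction to weak transonic shocks (cf. Remark \ref{rem:criterion_2}).

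Then I would apply Theorem \ref{thm:bvp_wellposedness_Laplace} with weight parameters $\sigma_{1}=\sigma_{0}+1$ and $\sigma_{2}=\sigma_{\infty}+1$. The bookkeeping to be carried out carefully is the one-unit shift between the weight conventions of \S 4 and of the present section (which reflects that here the behavior of $Du$, not of $u$, is prescribed): one checks $2+\alpha-\sigma_{1}=1+\alpha-\sigma_{0}=\beta_{0}$ and $2+\alpha-\sigma_{2}=1+\alpha-\sigma_{\infty}=\beta_{\infty}$, so that $\C_{\beta_{0},\beta_{\infty}}^{2,\alpha}(\fd)$ is exactly the double-weighted space of Theorem \ref{thm:bvp_wellposedness_Laplace} for these parameters. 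Under \eqref{eq:double_weights_non-perpendicular} one has $0<\sigma_{\infty}+1\leq1<\sigma_{0}+1<1+\widetilde{\sigma}_{\rm s}=-\Phi/\omega_{*}$, so both $\sigma_{1}$ and $\sigma_{2}$ satisfy \eqref{eq:con_weights} in its branch $0<\sigma<-\Phi/\omega_{*}$, and Theorem \ref{thm:bvp_wellposedness_Laplace} yields a unique solution of the reduced problem in $\C_{\beta_{0}}^{2,\alpha}(\fd)$ that moreover lies in $\C_{\beta_{0},\beta_{\infty}}^{2,\alpha}(\fd)$ and obeys \eqref{eq:estimate_bvp_Laplace_double_weights}. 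Finally, since the substitution is linear with constant coefficients and maps the edge onto the edge, $r_{X}$ and $r_{\by}$ are comparable, so all the weighted H\"older norms of $u$, $\hat f$ and $\overline g_{1},\overline g_{2}$ are equivalent in the two coordinate systems up to constants depending only on $P_{0},J_{0}$, i.e. only on the background solution; transferring existence, uniqueness and \eqref{eq:estimate_bvp_Laplace_double_weights} back to the $\by$-variables gives \eqref{eq:estimate_bvp_linearised-1}, which completes the proof (and in particular yields Theorem \ref{thm:wellposedness_linearized_problem} for general incoming flows).

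\textbf{Expected main obstacle.} The genuinely delicate points are the correct choice of branch in $\Phi=\arctan\alpha^{-}+\arctan\alpha^{+}$ so that $\Phi=-(\widetilde{\omega}_{\rm s}+\widetilde{\Phi}_{\rm s})$ and hence $-\Phi/\omega_{*}=1+\widetilde{\sigma}_{\rm s}$ --- this is what converts the abstract admissibility window \eqref{eq:con_weights} into the geometric condition \eqref{eq:con_stability_shock_polar-1} on the shock polar --- together with the consistent tracking of the one-unit weight shift simultaneously for the equation, both boundary data, and the solution. Everything else (that the constant-coefficient substitution produces exactly \eqref{eq:linearized_eq-2} and \eqref{eq:linearized_bdry_wedge-2}--\eqref{eq:linearized_bdry_shock-2}, and preserves the function spaces with equivalent norms) is routine verification.
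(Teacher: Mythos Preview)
Your proposal is correct and follows exactly the route the paper takes: the paper simply asserts that Theorem~\ref{thm:wellposedness_Laplacian-1} is ``a similar theorem to Theorem~\ref{thm:wellposedness_Laplacian}'' and ``a direct consequence of Theorem~\ref{thm:bvp_wellposedness_Laplace}'' after the constant-coefficient substitutions $Y=(J_0^{-1})^{\top}\by$, $X=P_0Y$ already carried out in \S6.2, and gives no further argument. Your write-up is in fact more explicit than the paper's, in that you spell out the one-unit shift between the weight conventions ($\beta=2+\alpha-\sigma$ in Theorem~\ref{thm:bvp_wellposedness_Laplace} versus $\beta=1+\alpha-\sigma$ here) and the identification $\Phi=-(\widetilde{\omega}_{\rm s}+\widetilde{\Phi}_{\rm s})$, $-\Phi/\omega_*=1+\widetilde{\sigma}_{\rm s}$, both of which the paper leaves implicit in Remark~\ref{rem:criterion_2}; your flagging of the branch choice for $\arctan\alpha^{+}$ as the delicate point is appropriate.
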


It can be seen that Theorem \ref{thm:wellposedness_linearized_problem}
is a direct consequence of Theorem \ref{thm:wellposedness_Laplacian-1}.

\section{The Iteration Scheme}

Now we develop the iteration scheme to solve the nonlinear problem
\eqref{eq:potential_eq_pht}--\eqref{eq:bdry_con_shock_pht}
to establish Theorem 5.2.

Let $-1<\sigma_{\infty}\leq0<\sigma_{0}$ be the constants defined
in \eqref{eq:double_weights_perpendicular} or \eqref{eq:double_weights_non-perpendicular}.
Define
\[
O_{\epsilon}^{(\sigma_{0},\sigma_{\infty})}=\set{u\in\C_{\beta_{0},\beta_{\infty}}^{2,\alpha}(\fd):\ \|u\|_{(2,\alpha;\fd)}^{(\beta_{0},\beta_{\infty})}\leq\epsilon},
\]
where $\beta_{0}=1+\alpha-\sigma_{0}$ and $\beta_{\infty}=1+\alpha-\sigma_{\infty}$.

Let
\begin{eqnarray*}
&&\fd  =  \set{\by=(y_{1},y_{2},\by')^{\top}\in\Real^{n}:\ y_{1}>0,\ y_{2}>0,\ \by'\in\Real^{n-2}},\\
&&\Gamma_{1}  = \set{\by=(y_{1},y_{2},\by')^{\top}\in\Real^{n}:\ y_{1}>0,\ y_{2}=0,\ \by'\in\Real^{n-2}},\\
&&\Gamma_{2}  = \set{\by=(y_{1},y_{2},\by')^{\top}\in\Real^{n}:\ y_{1}=0,\ y_{2}>0,\ \by'\in\Real^{n-2}}.
\end{eqnarray*}
Let
\begin{eqnarray*}
&& u(\by)  =  u_{0}(\by)+\dot{u}(\by),\\
&& v(\by)  = u_{0}(\by)+\dot{v}(\by).
\end{eqnarray*}
Assume that $\dot{v}(\by)\in O_{K\delta}^{(\sigma_{0},\sigma_{\infty})}$
with $K>0$ and $0<\delta\ll 1$ to be determined later. The iteration
scheme is determined by solving the linearized elliptic boundary value problem:
\begin{align}
 & \sum_{i,j=1}^{n}\tilde{a}_{ij}^{0}\partial_{y_{i}y_{j}}\dot{u}
   =f(\dot{v};\varphi_{\rm w}) & \text{ in }  \fd,\label{eq:iteration_eq}\\
 & \nabla_{Du}G_{j}(Du_{0};0)\cdot D\dot{u}=g_{j}(\dot{v};\varphi_{\rm w})
   & \,\, \text{ on }  \Gamma_{j},\, j=1,2,
   \label{eq:iteration_bdry_wedge}
\end{align}
where
\begin{align*}
& f(\dot{v};\varphi_{\rm w})= \sum_{i,j=1}^{n}\tilde{a}_{ij}^{0}\partial_{y_{i}y_{j}}\dot{v}
 -\frac{(\partial_{y_{1}}u_{0})^{3}}{(\partial_{y_{1}}v)^{3}}
 \Big(\sum_{i,j=1}^{n}\tilde{a}_{ij}\partial_{y_{i}y_{j}}v  -(\partial_{y_{1}}v)^{3}\Phi_{\rm w}(D^2\varphi_{\rm w};Dv;D\varphi_{\rm w}(v,\by'))\Big),\\
&g_{j}(\dot{v};\varphi_{\rm w})=\nabla_{Du}G_{j}(Du_{0};0)\cdot D\dot{v}
  -G_{j}(Dv;D\varphi_{\rm w}(v,\by')),\quad j=1,2.
\end{align*}

\begin{lem}\label{lem:iteration_welldefined}
There exist a sufficiently small constant $\delta_{1}>0$ and a constant
$K>1$ that is independent of $\delta_{1}$ such that,
for any $0<\delta\leq\delta_{1}$
and $\dot{v}(\by)\in O_{K\delta}^{(\sigma_{0},\sigma_{\infty})}$,
there exists a unique solution $\dot{u}(\by)\in O_{K\delta}^{(\sigma_{0},\sigma_{\infty})}$
to the boundary value problem \eqref{eq:iteration_eq}--\eqref{eq:iteration_bdry_wedge}.
That is, the mapping
\[
\fj:\ \dot{v}\mapsto\dot{u}
\]
is well-defined in $O_{K\delta}^{(\sigma_{0},\sigma_{\infty})}$.
\end{lem}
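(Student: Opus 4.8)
The plan is to treat \eqref{eq:iteration_eq}--\eqref{eq:iteration_bdry_wedge} as a linear elliptic boundary value problem with frozen (background) principal part and prescribed source terms $f(\dot v;\varphi_{\rm w})$, $g_j(\dot v;\varphi_{\rm w})$, solve it by the linear theory, and then verify that the solution still lies in the ball $O_{K\delta}^{(\sigma_0,\sigma_\infty)}$. Since the background shock is the weak one (point $B$), we have $\nu_1/\nu_2>0$, so \eqref{eq:con_stability_weights} holds and Theorem \ref{thm:wellposedness_linearized_problem} applies, with the weights $\beta_0=1+\alpha-\sigma_0$ and $\beta_\infty=1+\alpha-\sigma_\infty$ admissible precisely because $0<\sigma_0<\sigma_{\rm s}$ and $-1<\sigma_\infty\le0$. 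First I would fix $\delta$ small, depending only on the background, so that every $\dot v\in O_{K\delta}^{(\sigma_0,\sigma_\infty)}$ has $\|D\dot v\|_{L^\infty(\fd)}$ as small as needed (here $\sigma_0>0$ forces $D\dot v\to0$ at the edge $\fe$, $\sigma_\infty\le0$ keeps it bounded at infinity, and the double weight interpolates in between). Then $\partial_{y_1}v=\partial_{y_1}u_0+\partial_{y_1}\dot v$ stays uniformly away from zero, so the partial hodograph map is invertible, the matrix $\tilde a_{ij}(Dv;D\varphi^-(v,\cdot);D\varphi_{\rm w}(v,\cdot))$ is uniformly elliptic, and $\Phi_{\rm w}$, $G_j(Dv;D\varphi_{\rm w}(v,\cdot))$ are well defined; using that $\fm\C_\beta^{\ell,\alpha}(\fd)=\C_{\ell+\alpha}^{\ell,\alpha}(\fd)$ is a $\beta$-independent Banach algebra (Proposition \ref{prop:multiplier}) and the chain rule for the compositions $\varphi_{\rm w}(u(\by),\by')$, $\varphi^-(u(\by),y_2,\by')$, one checks $f(\dot v;\varphi_{\rm w})\in\C_{\beta_0,\beta_\infty}^{0,\alpha}(\fd)$ and $g_j(\dot v;\varphi_{\rm w})\in\C_{\beta_0,\beta_\infty}^{1,\alpha}(\Gamma_j)$.

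The crucial point is the smallness of these source terms. They were defined as the difference between the frozen linear operator acting on $\dot v$ and the full nonlinear operator acting on $v=u_0+\dot v$; since $u_0$ is affine ($D^2u_0\equiv0$) and is the exact background solution for $\varphi_{\rm w}\equiv0$, the constant and the pure-$\dot v$-linear contributions cancel identically, so that what survives is either linear in $\varphi_{\rm w}$ (through $D^2\varphi_{\rm w}$ and $D\varphi_{\rm w}$, via $\Phi_{\rm w}$ and the $\varphi_{\rm w}$-dependence of $G_j$) or genuinely quadratic in $(\dot v,\varphi_{\rm w})$ (products such as $(\tilde a_{ij}-\tilde a_{ij}^0)\,\partial_{y_iy_j}\dot v$, and the factor $1-(\partial_{y_1}u_0)^3/(\partial_{y_1}v)^3=O(|D\dot v|)$). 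A product of two functions each of double-weighted norm $O(\delta)$ has double-weighted norm $O(\delta^2)$ by the algebra property, and this is where the admissibility of $\beta_0,\beta_\infty$ is used; hence, with $\|\varphi_{\rm w}\|_{(2,\alpha;\Real_{+}\times\Real^{n-2})}^{(\beta_0,\beta_\infty)}\le\delta$ and $\|\dot v\|_{(2,\alpha;\fd)}^{(\beta_0,\beta_\infty)}\le K\delta$,
\[
\|f(\dot v;\varphi_{\rm w})\|_{(0,\alpha;\fd)}^{(\beta_0,\beta_\infty)}
+\sum_{j=1}^{2}\|g_j(\dot v;\varphi_{\rm w})\|_{(1,\alpha;\Gamma_j)}^{(\beta_0,\beta_\infty)}
\le C_0\,\delta\big(1+(K^2+K+1)\,\delta\big),
\]
where $C_0$ depends only on the background solution.

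Feeding this into estimate \eqref{eq:estimate_linearised} of Theorem \ref{thm:wellposedness_linearized_problem} produces a unique $\dot u\in\C_{\beta_0,\beta_\infty}^{2,\alpha}(\fd)$ solving \eqref{eq:iteration_eq}--\eqref{eq:iteration_bdry_wedge} with $\|\dot u\|_{(2,\alpha;\fd)}^{(\beta_0,\beta_\infty)}\le C_*C_0\,\delta(1+(K^2+K+1)\delta)$, $C_*$ being the constant in that theorem. Now choose $K:=2\max\{1,C_*C_0\}$ (independent of the forthcoming $\delta_1$), and then $\delta_1>0$ so small that it meets the smallness requirement above and $(K^2+K+1)\delta_1\le1$; for $0<\delta\le\delta_1$ this gives $\|\dot u\|_{(2,\alpha;\fd)}^{(\beta_0,\beta_\infty)}\le K\delta$, i.e. $\dot u\in O_{K\delta}^{(\sigma_0,\sigma_\infty)}$, while uniqueness of $\dot u$ is inherited from Theorem \ref{thm:wellposedness_linearized_problem}; thus $\fj:\dot v\mapsto\dot u$ is well defined on $O_{K\delta}^{(\sigma_0,\sigma_\infty)}$. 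I expect the main obstacle to be the bookkeeping in the first two steps — showing that, after the partial hodograph transformation, all the nonlinear terms (in particular the compositions with the unknown $u$ inside $\varphi_{\rm w}$ and $\varphi^-$, and the rational factor $(\partial_{y_1}u_0)^3/(\partial_{y_1}v)^3$) map $O_{K\delta}^{(\sigma_0,\sigma_\infty)}$ into the correct double-weighted Hölder spaces with a quadratic gain in $\delta$, while simultaneously controlling the edge singularity (weight $\sigma_0>0$) and the behavior at infinity (weight $\sigma_\infty\le0$).
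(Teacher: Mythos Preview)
Your proposal is correct and follows essentially the same route as the paper: estimate the source terms $f(\dot v;\varphi_{\rm w})$ and $g_j(\dot v;\varphi_{\rm w})$ by exploiting that $u_0$ is an affine exact solution (so only $O(\delta)$ terms linear in $\varphi_{\rm w}$ and $O(K^2\delta^2)$ quadratic remainders survive), then invoke Theorem~\ref{thm:wellposedness_linearized_problem} and close with a choice of $K$ followed by $\delta_1$. One small clarification: the ``algebra property'' you invoke is not that $\C_{\beta_0,\beta_\infty}^{\ell,\alpha}$ is an algebra, but rather that differences like $\tilde a_{ij}-\tilde a_{ij}^0$ and $(\partial_{y_1}u_0)^3/(\partial_{y_1}v)^3-1$ land in the multiplier space $\C_{\alpha}^{0,\alpha}(\fd)$ with norm $O(K\delta)$, which then multiplies $D^2\dot v\in\C_{\beta_0,\beta_\infty}^{0,\alpha}$ --- exactly the mechanism the paper uses via Proposition~\ref{prop:multiplier}.
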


\begin{proof}
Notice that
\begin{eqnarray*}
\tilde{a}_{ij}-\tilde{a}_{ij}^{0} & = & \tilde{a}_{ij}(Dv;D\varphi_{0}^{-};D\varphi_{\rm w}(v,\by'))
-\tilde{a}_{ij}(Du_{0};D\varphi_{0}^{-};0)\\
 & = & \int_{0}^{1}\nabla_{Du}\tilde{a}_{ij}(t)\dif t\cdot D\dot{v}
+\int_{0}^{1}\nabla_{D\varphi_{\rm w}}\tilde{a}_{ij}(t)\dif t\cdot D\varphi_{\rm w}(v,\by'),
\end{eqnarray*}
where
\begin{eqnarray*}
&&\nabla_{Du}\tilde{a}_{ij}(t)
  \defs  \nabla_{Du}\tilde{a}_{ij}(Du_{0}+tD\dot{v};D\varphi_{0}^{-};tD\varphi_{\rm w}(v,\by')),\\
&&\nabla_{D\varphi_{\rm w}}\tilde{a}_{ij}(t)
 \defs \nabla_{D\varphi_{\rm w}}\tilde{a}_{ij}(Du_{0}+tD\dot{v};D\varphi_{0}^{-};tD\varphi_{\rm w}(v,\by')).
\end{eqnarray*}
Since
\[
\|D\varphi_{\rm w}(u_{0},\by')\|_{\C_{\alpha}^{0,\alpha}(\fd)}\leq\delta,
\]
and $\tilde{a}_{ij}$ is a smooth function with respect to all of its
parameters, we have
\[
\|\tilde{a}_{ij}-\tilde{a}_{ij}^{0}\|_{\C_{\alpha}^{0,\alpha}(\fd)}\leq CK\delta.
\]

Since
\[
\Big\|\frac{(\partial_{y_{1}}u_{0})^{3}}{(\partial_{y_{1}}v)^{3}}-1\Big\|_{\C_{\alpha}^{0,\alpha}(\fd)}
\leq CK\delta,
\]
we obtain via a direct computation and employing Proposition \ref{prop:multiplier} that
\[
\Big\|\sum_{i,j=1}^{n}\tilde{a}_{ij}^{0}\partial_{y_{i}y_{j}}\dot{v}
-\frac{(\partial_{y_{1}}u_{0})^{3}}{(\partial_{y_{1}}v)^{3}}
\sum_{i,j=1}^{n}\tilde{a}_{ij}\partial_{y_{i}y_{j}}v\Big\|_{(0,\alpha;\fd)}^{(\beta_{0},\beta_{\infty})}
\leq CK^{2}\delta^{2}.
\]

We can also analogously verify that
\[
	\|\Phi_{\rm w}\|_{(0,\alpha;\fd)}^{(\beta_{0},\beta_{\infty})}
	\leq C(1 + K\delta)\delta,
\]
since it is easy to check that
\[
\Phi_{\rm w}
= \sum_{i,j\not=2} \partial_{x_{i}x_{j}}\varphi_{\rm w}(u,\by')\,\Phi_{\rm w}^{ij}(Du;D\varphi_{\rm w}),
\]
with $\Phi_{\rm w}^{ij}$ being some smooth functions of $Du$ and $D\varphi_{\rm w}$.

Thus, we obtain that
\[
\|f(\dot{v};\varphi_{\rm w})\|_{(0,\alpha;\fd)}^{(\beta_{0},\beta_{\infty})}
\leq CK^{2}\delta^{2}.
\]

Notice that
\begin{eqnarray*}
&&\nabla_{Du}G_{1}(Du_{0};0)\cdot D\dot{v}-G_{1}(Dv;0)
  =-\frac{1}{2}(D\dot{v})^{\top}\int_{0}^{1}\nabla_{Du}^{2}G_{1}(t)\dif t\, D\dot{v},\\
&&G_{1}(Dv;D\varphi_{\rm w}(v,\by'))-G_{1}(Dv;0)
 =\int_{0}^{1}\nabla_{D\varphi_{\rm w}}G_{1}(t)\dif t\cdot D\varphi_{\rm w}(v,\by'),
\end{eqnarray*}
where
\begin{eqnarray*}
&&\nabla_{Du}^{2}G_{1}(t)\defs  \nabla_{Du}^{2}G_{1}(Du_{0}+tD\dot{v};0),\\
&&\nabla_{D\varphi_{\rm w}}G_{1}(t) \defs  \nabla_{D\varphi_{\rm w}}G_{1}(Dv; tD\varphi_{\rm w}(v,\by')).
\end{eqnarray*}
Thus, we also obtain
\[
\|g_{1}(\dot{v};\varphi_{\rm w})\|_{(1,\alpha;\Gamma_{1})}^{(\beta_{0},\beta_{\infty})}
\leq C\left(1+K^{2}\delta\right)\delta.
\]
Similarly, we have
\[
\|g_{2}(\dot{v};\varphi_{\rm w})\|_{(1,\alpha;\Gamma_{2})}^{(\beta_{0},\beta_{\infty})}
\leq C\left(1+K^{2}\delta\right)\delta.
\]

Therefore, there exists a unique solution $\dot{u}\in\C_{\beta_{0},\beta_{\infty}}^{2,\alpha}(\fd)$ with the following
estimate:
\begin{eqnarray*}
\|\dot{u}\|_{(2,\alpha;\fd)}^{(\beta_{0},\beta_{\infty})}
& \leq & C\Big(\|f(\dot{v};\varphi_{\rm w})\|_{(0,\alpha;\fd)}^{(\beta_{0},\beta_{\infty})}
+\sum_{j=1,2}\|g_{j}(\dot{v};\varphi_{\rm w})\|_{(1,\alpha;\Gamma_{j})}^{(\beta_{0},\beta_{\infty})}\Big)\\
 & \leq & C\left(1+K^{2}\delta\right)\delta\leq C_{1}\delta
\end{eqnarray*}
for given $K$ and sufficiently small $\delta$.

Fix $K=C_{1}$ from now on.
Then we find that $\dot{u}(\by)\in O_{K\delta}^{(\sigma_{0},\sigma_{\infty})}$,
and the mapping
\[
\fj:\, \dot{v}\mapsto\dot{u}
\]
is well-defined in $O_{K\delta}^{(\sigma_{0},\sigma_{\infty})}$.
This completes the proof.
\end{proof}

\begin{lem}\label{lem:iteration_contraction}
There exists a sufficiently small constant $\delta_{0}>0$ such that,
for any $0<\delta\leq\delta_{0}$,  $\fj$ is a contraction mapping
in $O_{K\delta}^{(\sigma_{0},\sigma_{\infty})}$.
\end{lem}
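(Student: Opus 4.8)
The plan is to show that $\fj$ contracts distances in the norm $\norm{\cdot}_{(2,\alpha;\fd)}^{(\beta_0,\beta_\infty)}$ on the closed ball $O_{K\delta}^{(\sigma_0,\sigma_\infty)}$, with $K$ the constant fixed at the end of the proof of Lemma \ref{lem:iteration_welldefined}; since $O_{K\delta}^{(\sigma_0,\sigma_\infty)}$ is a closed ball of the Banach space $\C_{\beta_0,\beta_\infty}^{2,\alpha}(\fd)$, the contraction principle then yields the unique fixed point of $\fj$, which is exactly the solution asserted in Theorem \ref{thm:main_pht}. So take $\dot v_1,\dot v_2\in O_{K\delta}^{(\sigma_0,\sigma_\infty)}$ and set $\dot u_k=\fj(\dot v_k)$, $v_k=u_0+\dot v_k$. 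Subtracting the two copies of the linearized problem \eqref{eq:iteration_eq}--\eqref{eq:iteration_bdry_wedge}, the difference $\dot u_1-\dot u_2$ solves the same linear elliptic boundary value problem with right-hand sides $f(\dot v_1;\varphi_{\rm w})-f(\dot v_2;\varphi_{\rm w})$ in $\fd$ and $g_j(\dot v_1;\varphi_{\rm w})-g_j(\dot v_2;\varphi_{\rm w})$ on $\Gamma_j$. By the well-posedness estimate \eqref{eq:estimate_linearised} of Theorem \ref{thm:wellposedness_linearized_problem}, it therefore suffices to prove
\[
\norm{f(\dot v_1;\varphi_{\rm w})-f(\dot v_2;\varphi_{\rm w})}_{(0,\alpha;\fd)}^{(\beta_0,\beta_\infty)}
+\sum_{j=1,2}\norm{g_j(\dot v_1;\varphi_{\rm w})-g_j(\dot v_2;\varphi_{\rm w})}_{(1,\alpha;\Gamma_j)}^{(\beta_0,\beta_\infty)}
\le C K\delta\,\norm{\dot v_1-\dot v_2}_{(2,\alpha;\fd)}^{(\beta_0,\beta_\infty)}.
\]

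The key point is that, by construction, $f$ and $g_j$ contain no surviving linear part in $\dot v$. Since $u_0$ is affine, $\partial_{y_iy_j}v_k=\partial_{y_iy_j}\dot v_k$, so $f(\dot v;\varphi_{\rm w})$ reduces to $\sum_{i,j}\bigl(\tilde a_{ij}^0-\tfrac{(\partial_{y_1}u_0)^3}{(\partial_{y_1}v)^3}\tilde a_{ij}\bigr)\partial_{y_iy_j}\dot v+(\partial_{y_1}u_0)^3\Phi_{\rm w}$, in which the coefficient of $\partial^2\dot v$ vanishes at $(\dot v,\varphi_{\rm w})=(0,0)$ and $\Phi_{\rm w}$ is $O(\delta)$, as in Lemma \ref{lem:iteration_welldefined}; similarly $g_1(\dot v;\varphi_{\rm w})=\tfrac12(D\dot v)^\top\!\int_0^1\!\nabla_{Du}^2G_1(t)\,\dif t\,D\dot v-\int_0^1\!\nabla_{D\varphi_{\rm w}}G_1(t)\,\dif t\cdot D\varphi_{\rm w}(v,\by')$, and likewise for $g_2$. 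I would then expand each difference $f(\dot v_1;\varphi_{\rm w})-f(\dot v_2;\varphi_{\rm w})$ and $g_j(\dot v_1;\varphi_{\rm w})-g_j(\dot v_2;\varphi_{\rm w})$ as a finite sum of terms of two kinds: \emph{(i)} a coefficient of size $O(K\delta)$ (one of the above small coefficients) multiplying $\partial^2(\dot v_1-\dot v_2)$ or $D(\dot v_1-\dot v_2)$; and \emph{(ii)} a difference of two smooth functions evaluated at $(Dv_1,D\varphi_{\rm w}(v_1,\by'))$ versus $(Dv_2,D\varphi_{\rm w}(v_2,\by'))$, handled by the mean value theorem, hence bounded pointwise by $C\,|D(\dot v_1-\dot v_2)|$, multiplying a factor ($\partial^2\dot v_k$, $D\dot v_k$, or $D^2\varphi_{\rm w}$) that is itself $O(K\delta)$ or $O(\delta)$. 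In both cases one extracts a factor $K\delta$, split off by the standard telescoping $P_1Q_1-P_2Q_2=(P_1-P_2)Q_1+P_2(Q_1-Q_2)$.

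To make this rigorous in the weighted spaces I would run all estimates in the double-weighted H\"older norms $\norm{\cdot}_{(\ell,\alpha;\cdot)}^{(\beta_0,\beta_\infty)}$, using Proposition \ref{prop:multiplier} to multiply an (unweighted) function bounded in $\fm\C^{\ell,\alpha}=\C^{\ell,\alpha}_{\ell+\alpha}$ — the small coefficients and the smooth-function differences — by a function in $\C_{\beta_0,\beta_\infty}^{\ell,\alpha}$ carrying the derivatives of $\dot v_1-\dot v_2$, $\dot v_k$, or $\varphi_{\rm w}$, exactly as in Lemma \ref{lem:iteration_welldefined}. The composition $\varphi_{\rm w}(v_k,\by')$ needs a little care: one writes $\varphi_{\rm w}(v_1,\by')-\varphi_{\rm w}(v_2,\by')=\int_0^1\partial_{x_1}\varphi_{\rm w}(\cdots)\,\dif t\,(\dot v_1-\dot v_2)$ and likewise for its derivatives, so that a factor $\delta$ from $D\varphi_{\rm w}$ or $D^2\varphi_{\rm w}$ is again gained. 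Collecting all terms gives $\norm{\dot u_1-\dot u_2}_{(2,\alpha;\fd)}^{(\beta_0,\beta_\infty)}\le C_2K\delta\,\norm{\dot v_1-\dot v_2}_{(2,\alpha;\fd)}^{(\beta_0,\beta_\infty)}$, and choosing $\delta_0>0$ so small that $C_2K\delta_0\le\tfrac12$ makes $\fj$ a contraction on $O_{K\delta}^{(\sigma_0,\sigma_\infty)}$ for every $0<\delta\le\delta_0$, which proves the lemma. The main technical obstacle is precisely this bookkeeping in the weighted norms — confirming that every term in the expansions of $f(\dot v_1)-f(\dot v_2)$ and $g_j(\dot v_1)-g_j(\dot v_2)$ truly carries a factor $\delta$ once the background linearization has been subtracted, and that the compositions with $\varphi_{\rm w}(v_k,\by')$ and their derivatives fit the multiplier framework near the edge $\fe$; the elliptic solvability of the linear problem for $\dot u_1-\dot u_2$ is already packaged in Theorem \ref{thm:wellposedness_linearized_problem}.
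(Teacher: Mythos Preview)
Your proposal is correct and follows essentially the same approach as the paper: subtract the two linearized problems, apply the well-posedness estimate of Theorem \ref{thm:wellposedness_linearized_problem}, and bound the differences $f(\dot v_1;\varphi_{\rm w})-f(\dot v_2;\varphi_{\rm w})$ and $g_j(\dot v_1;\varphi_{\rm w})-g_j(\dot v_2;\varphi_{\rm w})$ by telescoping and the mean value theorem, using Proposition \ref{prop:multiplier} for the products in the weighted norms. The paper's proof carries out exactly this scheme, writing out the four-term telescoping for $f$ and the three-term telescoping for $g_j$ explicitly and arriving at the same contraction constant $CK\delta$; your only slip is the sign in the quadratic remainder for $g_1$ (it should be $-\tfrac12$, not $+\tfrac12$), which is immaterial to the estimate.
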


\begin{proof}
Denote that $(\dot{u}_{1}, \dot{u}_{2}):=\fj(\dot{v}_{1},\dot{v}_{2})$.
Then we have
\begin{align}
&\sum_{i,j=1}^{n}\tilde{a}_{ij}^{0}\partial_{y_{i}y_{j}}\left(\dot{u}_{1}-\dot{u}_{2}\right)
  =f(\dot{v}_{1};\varphi_{\rm w})-f(\dot{v}_{2};\varphi_{\rm w})
  & \text{ in } & \fd, \label{eq:iteration_eq_contraction}\\
& \nabla_{Du}G_{j}(Du_{0},;0)\cdot D\left(\dot{u}_{1}-\dot{u}_{2}\right)
  =g_{j}(\dot{v}_{1};\varphi_{\rm w})-g_{j}(\dot{v}_{2};\varphi_{\rm w})
  & \text{ on } & \Gamma_{j}.
\label{eq:iteration_bdry_wedge_contraction}
\end{align}

For the right-hand side of equation \eqref{eq:iteration_eq_contraction},
\begin{eqnarray*}
f(\dot{v}_{1};\varphi_{\rm w})-f(\dot{v}_{2};\varphi_{\rm w})
 & = & \sum_{i,j=1}^{n}\big(\tilde{a}_{ij}^{0}
 -\frac{(\partial_{y_{1}}u_{0})^{3}}{(\partial_{y_{1}}v_{1})^{3}}\tilde{a}_{ij}(\dot{v}_{1})\big)
  \partial_{y_{i}y_{j}}\left(\dot{v}_{1}-\dot{v}_{2}\right)\\
 && +\sum_{i,j=1}^{n}\Big(\frac{(\partial_{y_{1}}u_{0})^{3}}{(\partial_{y_{1}}v_{1})^{3}}\tilde{a}_{ij}(\dot{v}_{1})
 -\frac{(\partial_{y_{1}}u_{0})^{3}}{(\partial_{y_{1}}v_{2})^{3}}\tilde{a}_{ij}(\dot{v}_{2})\Big)
  \partial_{y_{i}y_{j}}\dot{v}_{2}\\
 && + \sum_{i,j\not=2}\big(\partial_{x_{i}x_{j}}\varphi_{\rm w}(v_{1},\by')
    -\partial_{x_{i}x_{j}}\varphi_{\rm w}(v_{2},\by')\big) \Phi_{\rm w}^{ij}(Dv_{1};D\varphi_{\rm w})\\
 && + \sum_{i,j\not=2}\partial_{x_{i}x_{j}}\varphi_{\rm w}(v_{2},\by')\big(\Phi_{\rm w}^{ij}(Dv_{1};D\varphi_{\rm w})
      - \Phi_{\rm w}^{ij}(Dv_{2};D\varphi_{\rm w})\big),
\end{eqnarray*}
which, with analogous computations as in Lemma \ref{lem:iteration_welldefined}, implies
\[
\|f(\dot{v}_{1};\varphi_{\rm w})-f(\dot{v}_{2};\varphi_{\rm w})\|_{(0,\alpha;\fd)}^{(\beta_{0},\beta_{\infty})}
\leq CK\delta\|\dot{v}_{1}-\dot{v}_{2}\|_{(2,\alpha;\fd)}^{(\beta_{0},\beta_{\infty})}.
\]

For the right-hand side of the boundary condition \eqref{eq:iteration_bdry_wedge_contraction} on $\Gamma_{j}, j=1,2$,
\begin{eqnarray*}
&&g_{j}(\dot{v}_{1};\varphi_{\rm w})-g_{j}(\dot{v}_{2};\varphi_{\rm w})\\
&&=\nabla_{Du}G_{j}(Du_{0};0)\cdot D\left(\dot{v}_{1}-\dot{v}_{2}\right)
  -\big(G_{j}(Dv_{1};0)-G_{j}(Dv_{2};0)\big)\\
&& \quad+\big(G_{j}(Dv_{1};0)-G_{j}(Dv_{2};0)\big)\\
&& \quad-\big(G_{j}(Dv_{1};D\varphi_{\rm w}(v_{1},\by'))
  -G_{j}(Dv_{2};D\varphi_{\rm w}(v_{1},\by'))\big)\\
&& \quad-\big(G_{j}(Dv_{2};D\varphi_{\rm w}(v_{1},\by'))
   -G_{j}(Dv_{2};D\varphi_{\rm w}(v_{2},\by'))\big)\\
&& =\int_{0}^{1}\big(\nabla_{Du}G_{j}(Du_{0};0)
   -\nabla_{Du}G_{j}(Dv_{t};0)\big)\dif t\cdot D\left(\dot{v}_{1}-\dot{v}_{2}\right)\\
&&\quad+\int_{0}^{1}\left(\nabla_{Du}G_{j}(Dv_{t};0)-\nabla_{Du}G_{j}(Dv_{t};D\varphi_{\rm w}(v_{1},\by'))\right)
  \dif t\cdot D\left(\dot{v}_{1}-\dot{v}_{2}\right)\\
&& \quad-\int_{0}^{1}\nabla_{D\varphi_{\rm w}}G_{j}(Dv_{2};D\varphi_{\rm w}^{t})\dif t\cdot D\left(\varphi_{\rm w}(v_{1},\by')
-\varphi_{\rm w}(v_{2},\by')\right),
\end{eqnarray*}
which implies
\[
\|g_{j}(\dot{v}_{1};\varphi_{\rm w})-g_{j}(\dot{v}_{2};\varphi_{\rm w})\|_{(1,\alpha;\Gamma_{j})}^{(\beta_{0},\beta_{\infty})}
\leq CK\delta\|\dot{v}_{1}-\dot{v}_{2}\|_{(2,\alpha;\fd)}^{(\beta_{0},\beta_{\infty})}.
\]

Thus, we have
\begin{eqnarray*}
&&\|\dot{u}_{1}-\dot{u}_{2}\|_{(2,\alpha;\fd)}^{(\beta_{0},\beta_{\infty})}\\
 && \leq  C\Big(\|f(\dot{v}_{1};\varphi_{\rm w})-f(\dot{v}_{2};\varphi_{\rm w})\|_{(0,\alpha;\fd)}^{(\beta_{0},\beta_{\infty})}
 +\sum_{j=1,2}\|g_{j}(\dot{v}_{1};\varphi_{\rm w})-g_{j}(\dot{v}_{2};\varphi_{\rm w})\|_{(1,\alpha;\Gamma_{j})}^{(\beta_{0},\beta_{\infty})}\Big)\\
&&\leq  CK\delta\|\dot{v}_{1}-\dot{v}_{2}\|_{(2,\alpha;\fd)}^{(\beta_{0},\beta_{\infty})}.
\end{eqnarray*}
Then, choosing $0<\delta\leq\delta_{0}$ such that $CK\delta_{0}=\frac{1}{2}$,
we have
\[
\|\dot{u}_{1}-\dot{u}_{2}\|_{(2,\alpha;\fd)}^{(\beta_{0},\beta_{\infty})}
\leq\frac{1}{2}\|\dot{v}_{1}-\dot{v}_{2}\|_{(2,\alpha;\fd)}^{(\beta_{0},\beta_{\infty})},
\]
which implies that the mapping $\fj$ is a contraction mapping
in $O_{K\delta}^{(\sigma_{0},\sigma_{\infty})}$.
\end{proof}

Combining Lemma 7.1 with Lemma 7.2, we obtain Theorem \ref{thm:main_pht}.

\section{The Two-Dimensional Case}

From Theorem \ref{thm:main} and Remarks \ref{rem:criterion_1} and \ref{rem:criterion_2},
we can see that, for an M-D wedge $(n\ge 3)$, the weak transonic
shock solution, which is represented by point $B$ on the shock
polar (see Fig. \ref{fig:shock_polar}),
is conditionally stable when the wedge edge is not perturbed and
the perturbation of the wedge surface is within some weighted H\"older spaces.
However, the stability of the strong transonic shock
solution, represented by point $A$ on the shock polar,
may require a different approach, since condition \eqref{eq:con_weights}
for the admissible weights cannot be improved.
This fact is indeed
interesting since, for the 2-D wedge, both the weak and strong transonic
shock solutions are conditionally stable,
except the critical point $S_{*}$ (see \cite{ChenFang,Fang}).
Moreover, for the strong case, we can even have better regularity
at the wedge vertex.
We now show these facts in this section.

\smallskip
For a 2-D wedge, its edge shrinks to a point.
Thus, we
can consider the stability problem as a special situation for the
case that the incoming supersonic flow is perpendicular to the wedge edge
with the perturbation of the whole fluid, independent
of $\bx'$ or $\by'$.
Therefore, the partial hodograph transformation
and the nonlinear iteration scheme are still
valid.
On the other hand, this yields the differences for the linearized
elliptic
problem \eqref{eq:linearized_eq_general}--\eqref{eq:linearized_bdry_shock_general},
since the singularity of $\fd$ is
a straight line for $n=3$ and a hyperplane for $n\geq4$, while
it is only a point for $n=2$ for which the better results can be achieved.

\begin{figure}
\centering
\includegraphics[width=300pt]{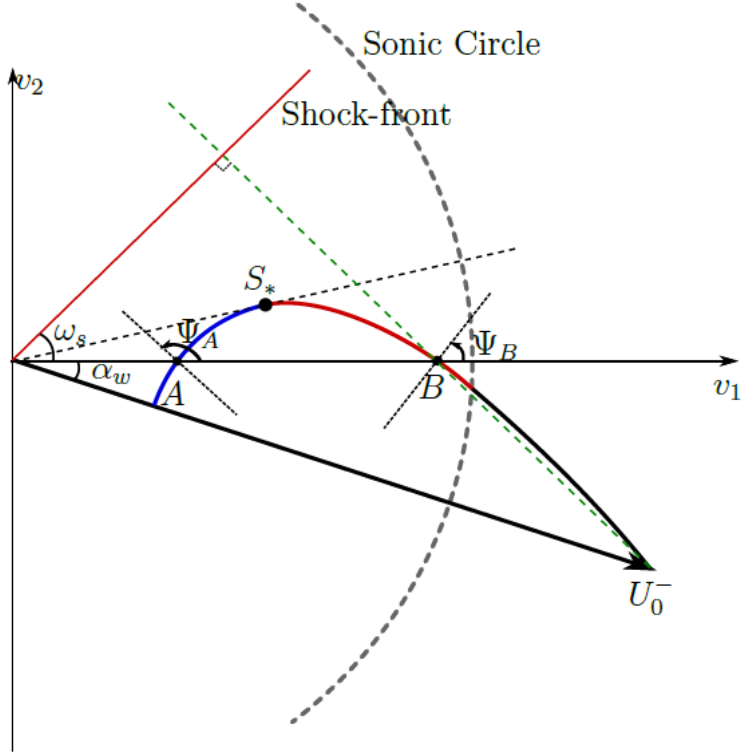}
\caption{The two-dimensional case\label{fig:criterion_2d}}
\end{figure}

For $n=2$, equation \eqref{eq:linearized_eq_general} becomes
\begin{equation}
\sum_{i,j=1}^{2}\tilde{a}_{ij}^{0}\partial_{y_{i}y_{j}}u=\hat{f},
\label{eq:linearized_eq_pht_2d}
\end{equation}
where
\begin{eqnarray*}
&&\tilde{A}_{0}\defs \big[\tilde{a}_{ij}^{0}\big]_{2\times 2}
  =J_{0}^{\top}A_{0}J_{0},\\
&& A_{0}=A(D\varphi_{0}^{+})
  =\big[a_{ij}^{0}\big]_{2\times2}
   =\big(c_{0}^{+}\big)^{2}
\begin{bmatrix}
1-\big(M_{0}^{+}\big)^{2} & 0\\
0 & 1
\end{bmatrix},\\
&&J_{0}=J\left(Du_{0};0\right)=\frac{1}{q_{0}^{-}\cos\alpha_{\rm w}-q_{0}^{+}}
\begin{bmatrix}
q_{0}^{-}\cos\alpha_{\rm w}-q_{0}^{+} & 0\\
-q_{0}^{-}\sin\alpha_{\rm w} & 1
\end{bmatrix}.
\end{eqnarray*}

The boundary condition \eqref{eq:linearized_bdry_wedge_general}
on $\Gamma_{1}$ remains unchanged:
\begin{equation}
-q_{0}^{-}\sin\alpha_{\rm w}\partial_{y_{1}}u+\partial_{y_{2}}u=\hat{g}_{1},\label{eq:linearized_bdry_wedge_pht_2d}
\end{equation}
and condition \eqref{eq:linearized_bdry_shock_general} on $\Gamma_{2}$ is
\begin{equation}
(Du)^{\top}\cdot J_{0}\bnu=\hat{g}_{2},\label{eq:linearized_bdry_shock_pht_2d}
\end{equation}
where $\bnu=(\nu_{1},\nu_{2})^{\top}=\nabla_{D\varphi}H_{\rm s}(D\varphi_{0})$,
the unit outer normal of the 2-D shock polar.

Let
\[
Y=P\by,
\]
where
\[
P=A_{0}^{-\frac{1}{2}}(J_{0}^{-1})^{\top}
\]
with
\begin{eqnarray*}
&&A_{0}^{-\frac{1}{2}}
=\frac{1}{c_{0}^{+}}
\begin{bmatrix}\frac{1}{\sqrt{1-(M_{0}^{+})^{2}}} & 0\\
0 & 1
\end{bmatrix},\\
&&J_{0}^{-1} = (q_{0}^{-}\cos\alpha_{\rm w}-q_{0}^{+})
\begin{bmatrix}
\frac{1}{q_{0}^{-}\cos\alpha_{\rm w}-q_{0}^{+}} & 0\\[1mm]
\frac{q_{0}^{-}\sin\alpha_{\rm w}}{q_{0}^{-}\cos\alpha_{\rm w}-q_{0}^{+}} & 1
\end{bmatrix}.
\end{eqnarray*}
Then equation \eqref{eq:linearized_eq_pht_2d} becomes
\begin{equation}
\partial_{Y_{1}Y_{1}}u+\partial_{Y_{2}Y_{2}}u=\hat{f}.
\label{eq:linearized_eq_2d}
\end{equation}

The boundaries $\Gamma_{1}$ and $\Gamma_{2}$ become
\begin{eqnarray*}
&&\overline{\Gamma}_{1}=\set{Y_{2}=0,\ Y_{1}>0},\\
&&\overline{\Gamma}_{2}=\set{Y_{2}=\tan\omega_{\rm s}\, Y_{1},\ Y_{1}>0},
\end{eqnarray*}
where
\[
\tan\omega_{\rm s}=\frac{q_{0}^{-}\cos\alpha_{\rm w}-q_{0}^{+}}{q_{0}^{-}\sin\alpha_{\rm w}}\sqrt{1-(M_{0}^{+})^{2}}.
\]
The boundary condition \eqref{eq:linearized_bdry_wedge_pht_2d} becomes
\begin{equation}
\partial_{Y_{2}}u=\overline{g}_{1}\defs\frac{c_{0}^{+}}{q_{0}^{-}\cos\alpha_{\rm w}-q_{0}^{+}}\hat{g}_{1},
\label{eq:linearized_bdry_wedge_2d}
\end{equation}
and the boundary condition \eqref{eq:linearized_bdry_shock_pht_2d}
becomes
\begin{equation}
\frac{\nu_{1}}{\sqrt{1-(M_{0}^{+})^{2}}}\partial_{Y_{1}}u
+\nu_{2}\partial_{Y_{2}}u=\overline{g}_{2}
\defs c_{0}^{+}\hat{g}_{2},
\label{eq:linearized_bdry_shock_2d}
\end{equation}
which can be rewritten as
\[
\frac{1}{r}\partial_{\omega}u+\tan(\omega_{\rm s}+\Phi_{\rm s})\partial_{r}u
=\frac{\cos\Phi_{\rm s}}{\cos(\omega_{\rm s}+\Phi_{\rm s})}\frac{\overline{g}_{2}}{\nu_{2}},
\]
under the polar coordinates for $(Y_{1},Y_{2})$, where
\[
\tan\Phi_{\rm s}=\frac{1}{\sqrt{1-(M_{0}^{+})^{2}}}\frac{\nu_{1}}{\nu_{2}}.
\]

In the stability analysis of 2-D transonic shocks,
problem \eqref{eq:linearized_eq_2d}--\eqref{eq:linearized_bdry_shock_2d}
plays the same role as
problem \eqref{eq:linearized_eq}--\eqref{eq:linearized_bdry_shock}
for the M-D case with the incoming supersonic flow orthogonal
to the edge.
Notice that both problems have the same formulation
with the only difference of the dimension
of the domain between them.
Thus, it is Theorem \ref{thm:angular_Laplace_Schauder},
rather than Theorem \ref{thm:bvp_wellposedness_Laplace},
that will be employed to establish the well-posedness of
problem \eqref{eq:linearized_eq_2d}--\eqref{eq:linearized_bdry_shock_2d}
so that the following lemma can be concluded, which is better than Theorem
\ref{thm:wellposedness_Laplacian}.

\begin{lem}
Let $\Lambda$ be the set of eigenvalues $\lambda$
satisfying \eqref{eq:strip_spectrum_set}:
\[
\Lambda=\{0\}\cup \big\{1+\frac{m\pi+\Phi_{\rm s}}{\omega_{\rm s}}\,:\, m\in\Int\big\}.
\]
Let $\sigma_{1}<\sigma_{2}$ and $\beta_{j}=1+\alpha-\sigma_{j}$.
If
\[
\left[1+\sigma_{1},1+\sigma_{2}\right]\cap\Lambda=\emptyset,
\]
and
\[
f\in\C_{\beta_{1},\beta_{2}}^{0,\alpha}\left(\fd\right),
\qquad\overline{g}_{j}\in\C_{\beta_{1},\beta_{2}}^{1,\alpha}\left(\Gamma_{j}\right),\ j=1,2,
\]
then there exists a unique solution $u\in\C_{\beta_{1},\beta_{2}}^{2,\alpha}(\fd)$
to the boundary value problem \eqref{eq:linearized_eq_2d}--\eqref{eq:linearized_bdry_shock_2d}
with the following estimate:
\begin{equation}
\big\|u\big\|_{(2,\alpha;\fd)}^{(\beta_{1},\beta_{2})}
\leq C\Big(\big\|f\big\|_{(0,\alpha;\fd)}^{(\beta_{1},\beta_{2})}
+\sum_{j=1}^{2}\big\|\overline{g}_{j}\big\|_{(1,\alpha;\Gamma_{j})}^{(\beta_{1},\beta_{2})}\Big).
\label{eq:estimate_bvp_linearised_2d}
\end{equation}
\end{lem}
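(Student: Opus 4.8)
The plan is to reduce \eqref{eq:linearized_eq_pht_2d}--\eqref{eq:linearized_bdry_shock_pht_2d} to a standard oblique-derivative problem for the Laplacian on a plane angular domain and then invoke the weighted Schauder theory recalled in the appendix, Theorem~\ref{thm:angular_Laplace_Schauder}; the whole point of the Lemma, as opposed to the M-D Theorem~\ref{thm:bvp_wellposedness_Laplace}, is that for $n=2$ the vertex singularity leaves the \emph{entire} countable family of admissible weight strips cut out by the associated operator pencil. First I would record that the linear, nonsingular change of variables $Y=P\by$ with $P=A_{0}^{-\frac{1}{2}}(J_{0}^{-1})^{\top}$, already carried out above, turns the problem into $\partial_{Y_{1}Y_{1}}u+\partial_{Y_{2}Y_{2}}u=\hat{f}$ on the sector $\set{0<\omega<\omega_{\rm s}}$ with the Neumann condition $\partial_{Y_{2}}u=\overline{g}_{1}$ on $\overline{\Gamma}_{1}=\set{\omega=0}$ and the oblique-derivative condition \eqref{eq:linearized_bdry_shock_2d} on $\overline{\Gamma}_{2}=\set{\omega=\omega_{\rm s}}$, whose obliqueness is governed by $\tan\Phi_{\rm s}=\frac{1}{\sqrt{1-(M_{0}^{+})^{2}}}\frac{\nu_{1}}{\nu_{2}}$. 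Because $P$ is linear, invertible, and fixes the vertex, it induces an isomorphism of each weighted space $\C_{\beta}^{\ell,\alpha}$ appearing in the statement, so it suffices to argue in the $Y$--variables; and away from the vertex both boundary conditions are genuine non-tangential oblique-derivative conditions --- trivially on $\overline{\Gamma}_{1}$, and on $\overline{\Gamma}_{2}$ since $\nu_{2}\neq 0$ and $\omega_{\rm s}\in(0,\pi/2)$ --- so the complementing condition holds on each of $\overline{\Gamma}_{1}$ and $\overline{\Gamma}_{2}$ and the origin is the only singular point.

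Next I would run separation of variables $u=r^{\lambda}\psi(\omega)$ for the homogeneous problem, producing the operator pencil $\psi''+\lambda^{2}\psi=0$ on $(0,\omega_{\rm s})$ with $\psi'(0)=0$ and $\psi'(\omega_{\rm s})+\lambda\tan(\omega_{\rm s}+\Phi_{\rm s})\psi(\omega_{\rm s})=0$. For $\lambda\neq 0$ the first condition forces $\psi\propto\cos(\lambda\omega)$, and the second then gives $\tan(\lambda\omega_{\rm s})=\tan(\omega_{\rm s}+\Phi_{\rm s})$, i.e.\ $\lambda=1+\frac{m\pi+\Phi_{\rm s}}{\omega_{\rm s}}$ with $m\in\Int$; together with the value $\lambda=0$ (constant eigenfunction), this is exactly the characteristic equation \eqref{eq:strip_spectrum_set}, whose solution set is the $\Lambda$ of the statement. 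By Theorem~\ref{thm:angular_Laplace_Schauder}, for any weight $\beta=1+\alpha-\sigma$ with $1+\sigma\notin\Lambda$ the operator of \eqref{eq:linearized_eq_2d}--\eqref{eq:linearized_bdry_shock_2d} is an isomorphism $\C_{\beta}^{2,\alpha}(\fd)\to\C_{\beta}^{0,\alpha}(\fd)\times\prod_{j=1,2}\C_{\beta}^{1,\alpha}(\Gamma_{j})$.

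For the double-weighted conclusion, let $\sigma_{1}<\sigma_{2}$ with $[1+\sigma_{1},1+\sigma_{2}]\cap\Lambda=\emptyset$ and $\beta_{j}=1+\alpha-\sigma_{j}$. Since both endpoints avoid $\Lambda$, the data in $\C_{\beta_{1},\beta_{2}}^{0,\alpha}(\fd)\times\prod_{j}\C_{\beta_{1},\beta_{2}}^{1,\alpha}(\Gamma_{j})$ produce a unique $u_{1}\in\C_{\beta_{1}}^{2,\alpha}(\fd)$ and a unique $u_{2}\in\C_{\beta_{2}}^{2,\alpha}(\fd)$ solving the problem, each with its single-weight estimate. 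Their difference solves the homogeneous problem, and by the asymptotic-expansion part of the cited theory it is a finite combination of singular solutions $r^{\lambda}\psi(\omega)$ whose exponent lies strictly between the two weight lines; as $\Lambda$ contains no such point, this forces $u_{1}=u_{2}$, so the unique solution lies in $\C_{\beta_{1}}^{2,\alpha}(\fd)\cap\C_{\beta_{2}}^{2,\alpha}(\fd)=\C_{\beta_{1},\beta_{2}}^{2,\alpha}(\fd)$, and adding the two single-weight estimates yields \eqref{eq:estimate_bvp_linearised_2d}.

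The real obstacle is not this reduction but Theorem~\ref{thm:angular_Laplace_Schauder} itself --- constructing the weighted resolvent of the pencil, establishing the isomorphism on each admissible weight line, and, above all, justifying the step ``no pencil eigenvalue in $[1+\sigma_{1},1+\sigma_{2}]$ implies that the $\beta_{1}$--solution already has the $\beta_{2}$--behaviour''; this is precisely the Kondrat'ev/Maz'ya--Plamenevskij machinery recalled in \S 4 and sketched in the appendix, so here one only verifies its hypotheses (ellipticity of the Laplacian, non-tangentiality of the boundary operators, the explicit $\Lambda$). A point worth flagging is the role of infinity: in the double-weighted norm the pair $\beta_{1},\beta_{2}$ controls the vertex and the behaviour as $r\to\infty$ simultaneously, and it is again the gap-freeness of $[1+\sigma_{1},1+\sigma_{2}]$ --- the absence of a resonance between the near-vertex and far-field asymptotics --- that lets us identify $u_{1}$ with $u_{2}$.
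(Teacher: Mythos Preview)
Your proposal is correct and follows precisely the route the paper intends: the paper states the lemma as a direct consequence of Theorem~\ref{thm:angular_Laplace_Schauder} after the linear change of variables $Y=P\by$ already carried out in \S 8, and you supply the missing details (the explicit pencil computation yielding $\Lambda$, the verification of obliqueness, and the weight bookkeeping $\beta=1+\alpha-\sigma\leftrightarrow$ ``theorem-$\sigma$''$=1+\sigma$). Your separate double-weight argument via $u_{1}=u_{2}$ is a correct unpacking of the ``Moreover'' clause in Theorem~\ref{thm:angular_Laplace_Schauder}, though strictly speaking you could simply quote that clause once you observe that $[1+\sigma_{1},1+\sigma_{2}]\cap\Lambda=\emptyset$ with $\Lambda\subset\Real$ discrete allows you to enclose the interval in an eigenvalue-free open strip.
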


\begin{rem}
From the definition of the weighted H\"{o}lder norms,
we can see that the weight power $\sigma_{1}$ describes the asymptotic behavior
of solution $u$ with the property that $Du=O(r^{\sigma_{1}})$
as $r\sTo\infty$, while the weight power $\sigma_{2}$ describes the
regularity of $u$ at the origin with the property that $Du=O(r^{\sigma_{2}})$
as $r\sTo0$.
Since, in the stability analysis of the wedge shocks,
$Du$ relates with the velocity field of the flow, we expect that
$Du$ should be bounded in $\fd$.
Therefore, we need that
\[
\sigma_{1}\leq0\leq\sigma_{2}.
\]
Then, in order to decide the applicable admissible weights, we need to calculate
the eigenvalues corresponding to $m=-1, 0, 1$ in set $\Lambda$.
By definition, $-\frac{\pi}{2}<\Phi_{\rm s}<\frac{\pi}{2}$
and $0<\omega_{\rm s}<\frac{\pi}{2}$.
Thus, for $m=-1$,
\[
\lambda_{-1}\defs1+\frac{-\pi+\Phi_{\rm s}}{\omega_{\rm s}}<0;
\]
for $m=0$,
\[
\lambda_{0}\defs1+\frac{\Phi_{\rm s}}{\omega_{\rm s}}\defs1+\sigma_{\rm s};
\]
and for $m=1$,
\[
\lambda_{1}\defs1+\frac{\pi+\Phi_{\rm s}}{\omega_{\rm s}}>2.
\]
That is, we obtain the following inequality for the
eigenvalues $\set{\lambda_{-1}, 0, \lambda_{0}, \lambda_{1}}\subset\Lambda$:
\[
\lambda_{-1}<0,\ \lambda_{0}<\lambda_{1},\qquad\text{or }\qquad\lambda_{-1}-1<-1,\ \sigma_{\rm s}<\lambda_{1}-1.
\]
Therefore, in order to decide the applicable admissible weights, we need to compare
$\lambda_{0}$ with $1$, or equivalently, to compare $\sigma_{\rm s}$
with $0$.
Notice that $\sigma_{\rm s}$  is determined by the background
shock solution.
One can verify that $\sigma_{\rm s}<0$ for the strong transonic shock
represented by $A$,
$\sigma_{\rm s}>0$ for the weak transonic shock represented by $B$,
and $\sigma_{\rm s}=0$ for
the critical shock solution represented by $S_{*}$ (see Fig. \ref{fig:criterion_2d}).

For the strong transonic shock solution represented by $A$ on the shock polar, we have
\[
\sigma_{\rm s}=\frac{\Phi_{\rm s}}{\omega_{\rm s}}<0.
\]
Then any $\sigma_{1}$ and $\sigma_{2}$ satisfying
\[
\max(-1, \sigma_{\rm s})<\sigma_{1}\leq 0 \leq\sigma_{2}<\lambda_{1}-1
\]
can be applicable weights.
Since $\lambda_{1}>2$, the regularity of
velocity $Du$ near the origin (the wedge vertex) can be
$\C^{1}$, or even better.
Velocity $Du$ decays slower than $r^{-1}$ as $r\sTo\infty$,
while $Du$ decays slower than $r^{\sigma_{\rm s}}$
in case $\sigma_{\rm s}>-1$.

For the weak transonic shock solution represented by $B$, we have
\[
\sigma_{\rm s}=\frac{\Phi_{\rm s}}{\omega_{\rm s}}>0.
\]
Then any $\sigma_{1}$ and $\sigma_{2}$ satisfying
\[
-1<\sigma_{1}\leq0\leq\sigma_{2}<\sigma_{\rm s}
\]
can be applicable weights,
and solution $u$ can be $\C^{1+\sigma_{2}}$
near the origin (the wedge vertex), while $Du$ decays slower
than $r^{\sigma_{1}}$ as $r\sTo\infty$.
\end{rem}

Concluding the above argument, we obtain the following theorem
for the linearized
problem \eqref{eq:linearized_eq_pht_2d}--\eqref{eq:linearized_bdry_shock_pht_2d}.

\begin{thm}
Let $\left(U_{0}^{-}; U_{0}^{+}\right)$ be a transonic shock solution
on the shock polar {\rm (}see Fig. {\rm \ref{fig:criterion_2d}}{\rm )}.
\begin{enumerate}
\item If $\left(U_{0}^{-};U_{0}^{+}\right)$ is the strong transonic shock
solution represented by $A$, which implies that
\[
\frac{\nu_{1}}{\nu_{2}}<0,
\]
then, for any $\sigma_{0}^{A}$ and $\sigma_{\infty}^{A}$ with
\[
\max\big\{-1,\ \frac{\Phi_{\rm s}}{\omega_{\rm s}}\big\}
<\sigma_{\infty}^{A}\leq0\leq\sigma_{0}^{A}<\frac{\pi+\Phi_{\rm s}}{\omega_{\rm s}},
\]
when
\[
f\in \C_{\beta_{0}^{A},\beta_{\infty}^{A}}^{0,\alpha}(\fd),\qquad g_{j}\in\C_{\beta_{0}^{A},\beta_{\infty}^{A}}^{1,\alpha}(\Gamma_{j}),\ j=1,2,
\]
with $\beta_{0}^{A}=1+\alpha-\sigma_{0}^{A}$ and $\beta_{\infty}^{A}=1+\alpha-\sigma_{\infty}^{A}$,
there exists a unique solution $u\in\C_{\beta_{0}^{A},\beta_{\infty}^{A}}^{2,\alpha}(\fd)$
to the boundary value problem \eqref{eq:linearized_eq_pht_2d}--\eqref{eq:linearized_bdry_shock_pht_2d}
satisfying the following estimate{\rm :}
\begin{equation}
\big\|u\big\|_{(2,\alpha;\fd)}^{(\beta_{0}^{A},\beta_{\infty}^{A})}
\leq C\Big(\big\|f\big\|_{(0,\alpha;\fd)}^{(\beta_{0}^{A},\beta_{\infty}^{A})}
+\sum_{j=1}^{2}\big\|g_{j}\big\|_{(1,\alpha;\Gamma_{j})}^{(\beta_{0}^{A},\beta_{\infty}^{A})}\Big).
\label{eq:estimate_bvp_linearised_2d-1}
\end{equation}

\item If $\left(U_{0}^{-}; U_{0}^{+}\right)$ is the weak transonic shock
solution represented by $B$, which implies that
\[
\frac{\nu_{1}}{\nu_{2}}>0,
\]
then, for any $\sigma_{0}^{B}$ and $\sigma_{\infty}^{B}$ with
\[
-1<\sigma_{\infty}^{B}\leq0\leq\sigma_{0}^{B}<\frac{\Phi_{\rm s}}{\omega_{\rm s}},
\]
when
\[
f\in\C_{\beta_{0}^{B},\beta_{\infty}^{B}}^{0,\alpha}(\fd),
\qquad g_{j}\in\C_{\beta_{0}^{B},\beta_{\infty}^{B}}^{1,\alpha}(\Gamma_{j}),\ j=1,2,
\]
with $\beta_{0}^{B}=1+\alpha-\sigma_{0}^{B}$ and $\beta_{\infty}^{B}=1+\alpha-\sigma_{\infty}^{B}$,
there exists a unique solution $u\in\C_{\beta_{0}^{B},\beta_{\infty}^{B}}^{2,\alpha}\left(\fd\right)$
to the boundary value problem \eqref{eq:linearized_eq_pht_2d}--\eqref{eq:linearized_bdry_shock_pht_2d}
satisfying the following estimate{\rm :}
\begin{equation}
\big\|u\big\|_{(2,\alpha;\fd)}^{(\beta_{0}^{B},\beta_{\infty}^{B})}
\leq C\Big(\big\|f\big\|_{(0,\alpha;\fd)}^{(\beta_{0}^{B},\beta_{\infty}^{B})}
+\sum_{j=1}^{2}\big\|g_{j}\big\|_{(1,\alpha;\Gamma_{j})}^{(\beta_{0}^{B},\beta_{\infty}^{B})}\Big).
\label{eq:estimate_bvp_linearised_2d-1-1}
\end{equation}
\end{enumerate}
\end{thm}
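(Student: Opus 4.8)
The plan is to reduce \eqref{eq:linearized_eq_pht_2d}--\eqref{eq:linearized_bdry_shock_pht_2d} to the model oblique derivative problem for the Laplacian in a plane angular domain, and then to invoke the preceding Lemma. First I would apply the fixed affine change of variables $Y=P\by$ with $P=A_{0}^{-1/2}(J_{0}^{-1})^{\top}$, exactly as carried out above: this turns the equation into \eqref{eq:linearized_eq_2d} on the plane angle $\{\,0<\omega<\omega_{\rm s}\,\}$ with vertex at the origin, the wedge condition into the conormal condition \eqref{eq:linearized_bdry_wedge_2d} on $\{\omega=0\}$, and the shock condition into the oblique condition \eqref{eq:linearized_bdry_shock_2d} on $\{\omega=\omega_{\rm s}\}$, whose polar form is $\tfrac{1}{r}\partial_{\omega}u+\tan(\omega_{\rm s}+\Phi_{\rm s})\,\partial_{r}u=\tfrac{\cos\Phi_{\rm s}}{\cos(\omega_{\rm s}+\Phi_{\rm s})}\tfrac{\overline{g}_{2}}{\nu_{2}}$. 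Since $P$ is a fixed nonsingular matrix preserving the edge $\{Y_{1}=Y_{2}=0\}$, it induces isomorphisms between the corresponding weighted H\"older spaces with equivalent norms, so it suffices to treat the transformed problem and then transfer the estimate back.

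Next I would apply the preceding Lemma (equivalently Theorem \ref{thm:angular_Laplace_Schauder}) with $\sigma_{1}=\sigma_{\infty}$ and $\sigma_{2}=\sigma_{0}$, which reduces the whole claim to checking that the closed interval $[\,1+\sigma_{1},\,1+\sigma_{2}\,]$ is disjoint from the spectrum $\Lambda=\{0\}\cup\{\,1+\tfrac{m\pi+\Phi_{\rm s}}{\omega_{\rm s}}:m\in\Int\,\}$ of the model angular problem. Using only the elementary bounds $-\tfrac{\pi}{2}<\Phi_{\rm s}<\tfrac{\pi}{2}$ and $0<\omega_{\rm s}<\tfrac{\pi}{2}$, one gets $\lambda_{-1}:=1+\tfrac{\Phi_{\rm s}-\pi}{\omega_{\rm s}}<0$ and $\lambda_{1}:=1+\tfrac{\Phi_{\rm s}+\pi}{\omega_{\rm s}}>2$, so the only eigenvalues in $(\lambda_{-1},\lambda_{1})$ are $0$ and $\lambda_{0}:=1+\tfrac{\Phi_{\rm s}}{\omega_{\rm s}}=1+\sigma_{\rm s}$. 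In case (1), $\nu_{1}/\nu_{2}<0$ forces $\Phi_{\rm s}<0$, hence $\lambda_{0}<1$, so the gap surrounding the exponent $1$ (that is, $\sigma=0$) is $(\max\{0,\lambda_{0}\},\,\lambda_{1})$, and the hypothesis $\max\{-1,\Phi_{\rm s}/\omega_{\rm s}\}<\sigma_{\infty}^{A}\le0\le\sigma_{0}^{A}<(\pi+\Phi_{\rm s})/\omega_{\rm s}$ places $[\,1+\sigma_{\infty}^{A},\,1+\sigma_{0}^{A}\,]$ strictly inside it. In case (2), $\nu_{1}/\nu_{2}>0$ forces $\Phi_{\rm s}>0$, hence $\lambda_{0}>1$, so the gap surrounding $1$ is $(0,\lambda_{0})=(0,1+\sigma_{\rm s})$, and $-1<\sigma_{\infty}^{B}\le0\le\sigma_{0}^{B}<\Phi_{\rm s}/\omega_{\rm s}$ places $[\,1+\sigma_{\infty}^{B},\,1+\sigma_{0}^{B}\,]$ strictly inside it. In both cases the Lemma yields existence, uniqueness, and the estimates \eqref{eq:estimate_bvp_linearised_2d-1} and \eqref{eq:estimate_bvp_linearised_2d-1-1}, which are then transferred back through $Y=P\by$.

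The substance of the argument lies not in this bookkeeping but in the two inputs it uses. The first is the computation of the spectrum $\Lambda$ for the oblique derivative problem for $\triangle$ in a plane angle, obtained by seeking homogeneous solutions $r^{\lambda}\phi(\omega)$ and solving $\phi''+\lambda^{2}\phi=0$ with $\phi'(0)=0$ and $\phi'(\omega_{\rm s})+\lambda\tan(\omega_{\rm s}+\Phi_{\rm s})\,\phi(\omega_{\rm s})=0$; this is the calculation carried out in the appendix. The second is the regularity shift built into the Lemma, namely that a solution lying in one admissible weighted space automatically lies in the other whenever no eigenvalue separates the two weights. I expect the only genuinely delicate point to be ensuring that the endpoint inequalities stay strict in the borderline configurations (for instance $\sigma_{\rm s}\le-1$ in case (1), or $\Phi_{\rm s}$ close to $0$), so that $[\,1+\sigma_{\infty},\,1+\sigma_{0}\,]$ is a \emph{closed} interval disjoint from $\Lambda$ --- which is exactly what the $\max\{-1,\cdot\}$ in the hypothesis of case (1) is meant to guarantee.
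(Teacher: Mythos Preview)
Your proposal is correct and follows essentially the same approach as the paper: the theorem is presented there as a direct consequence of the preceding Lemma (your ``preceding Lemma'' is exactly Lemma 8.1, itself a specialization of Theorem \ref{thm:angular_Laplace_Schauder}) together with the eigenvalue bookkeeping of Remark 8.2, and you have reproduced that bookkeeping accurately, including the identification of the relevant gap $(\max\{0,\lambda_{0}\},\lambda_{1})$ in case (1) and $(0,\lambda_{0})$ in case (2). The only cosmetic point is that the Lemma as stated asks for $\sigma_{1}<\sigma_{2}$, whereas the theorem allows $\sigma_{\infty}=\sigma_{0}=0$; this degenerate case is covered by the single-weight isomorphism in Theorem \ref{thm:angular_Laplace_Schauder} (since $1\notin\Lambda$ whenever $\Phi_{\rm s}\neq 0$), so no genuine gap arises.
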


Then, with an analogous nonlinear iteration argument as in \S 7 for $n\geq3$,
we can obtain the following stability theorem for both the weak transonic shock solution and the strong one.

\begin{thm}
Let $\left(\varphi_{0}^{-}\left(\bx\right);\varphi_{0}^{+}\left(\bx\right)\right)$
be a transonic shock solution.
\begin{enumerate}
\item If $\left(\varphi_{0}^{-}\left(\bx\right);\varphi_{0}^{+}\left(\bx\right)\right)$
is the strong transonic shock solution that is represented
by $A$ on the shock polar {\rm (}see Fig. {\rm \ref{fig:criterion_2d}}{\rm )},
then there exist  $\delta_{0}^{A}>0$ sufficiently small  and $\sigma_{\rm s}\defs\frac{\Phi_{\rm s}}{\omega_{\rm s}}<0$,
depending on the background solution, such that, for any
\textup{
\[
\max\left\{-1,\ \sigma_{\rm s}\right\}<\sigma_{\infty}^{A}\leq0\leq\sigma_{0}^{A}
<\frac{\pi}{\omega_{\rm s}}+\sigma_{\rm s},
\]
}
when the perturbation of the wedge surface $\Gamma_{\rm w}$ is small
in the sense that
\[
\norm{\varphi_{\rm w}(x_1)}_{(2,\alpha;\Real_{+})}^{(\beta_{0}^{A},\beta_{\infty}^{A})}
\leq\delta\leq\delta_{0}^{A},
\]
with $\beta_{0}^{A}=1+\alpha-\sigma_{0}^{A}$ and $\beta_{\infty}^{A}=1+\alpha-\sigma_{\infty}^{A}$,
there exists a unique solution $u(\by)$ to the boundary
value problem \eqref{eq:potential_eq_pht}--\eqref{eq:bdry_con_shock_pht}
satisfying
\[
\norm{u-u_{0}}_{(2,\alpha;\fd)}^{(\beta_{0}^{A},\beta_{\infty}^{A})}\leq K\delta,
\]
where $K>0$ depends on the background solution, but is independent
of $\delta_{0}^{A}$.

\item If $\left(\varphi_{0}^{-}\left(\bx\right);\varphi_{0}^{+}\left(\bx\right)\right)$
is the weak transonic shock solution, that is, the one represented
by $B$ on the shock polar {\rm (}see Fig. {\rm \ref{fig:criterion_2d}}{\rm )},
then there exist $\delta_{0}^{B}>0$ sufficiently small and
$\sigma_{\rm s}\defs\frac{\Phi_{\rm s}}{\omega_{\rm s}}>0$,
depending on the background solution, such that, for any
\[
-1<\sigma_{\infty}^{B}\leq0\leq\sigma_{0}^{B}<\sigma_{\rm s},
\]
when the perturbation of the wedge surface $\Gamma_{\rm w}$ is small in
the sense that
\[
\norm{\varphi_{\rm w}(x_{1})}_{(2,\alpha;\Real_{+})}^{(\beta_{0}^{B},\beta_{\infty}^{B})}
\leq \delta\leq\delta_{0}^{B},
\]
with $\beta_{0}^{B}=1+\alpha-\sigma_{0}^{B}$ and $\beta_{\infty}^{B}=1+\alpha-\sigma_{\infty}^{B}$,
there exists a unique solution $u\left(\by\right)$ to the boundary
value problem \eqref{eq:potential_eq_pht}--\eqref{eq:bdry_con_shock_pht}
satisfying
\[
\norm{u-u_{0}}_{(2,\alpha;\fd)}^{(\beta_{0}^{B},\beta_{\infty}^{B})}\leq K\delta,
\]
where $K>0$ depends on the background solution, but is independent of $\delta_{0}^{B}$.
\end{enumerate}
\end{thm}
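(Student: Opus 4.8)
The strategy is to repeat, almost verbatim, the three-step scheme developed for $n\ge3$ --- partial hodograph reduction, linearization on the background solution, and a contractive nonlinear iteration --- the only real change being that the angular-domain theory of Theorem~\ref{thm:angular_Laplace_Schauder} replaces the dihedral-angle theory of Theorem~\ref{thm:bvp_wellposedness_Laplace}. First I would note that for a $2$-D wedge the whole configuration, and the perturbation $\varphi_{\rm w}=\varphi_{\rm w}(x_1)$, is independent of $\bx'$, while the edge degenerates to the vertex. Hence the partial hodograph transformation $\fp$ of \S5 applies with all $\by'$-dependence suppressed: it is invertible whenever $\partial_{y_1}u$ stays close to $\partial_{y_1}u_0>0$, and it turns Problem~3.1 into the fixed boundary value problem \eqref{eq:potential_eq_pht}--\eqref{eq:bdry_con_shock_pht} on the quarter-plane $\fd$. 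The edge conditions \eqref{eq:tangential_deri_edge_pht} collapse to the vanishing of $u-u_0$ and $Du-Du_0$ at the vertex; exactly as in the remark after Theorem~\ref{thm:main_pht}, these hold automatically once $\sigma_0\ge0$, since the weighted bound on $\varphi_{\rm w}$ forces $\varphi_{\rm w}$ and $D\varphi_{\rm w}$ to vanish at $r=0$ and the target estimate then yields $u-u_0=O(r^{1+\sigma_0})$, $Du-Du_0=O(r^{\sigma_0})$ there. So it suffices to solve \eqref{eq:potential_eq_pht}--\eqref{eq:bdry_con_shock_pht} with the stated weights and pull the solution back by $\fp^{-1}$.

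Second, I would linearize \eqref{eq:potential_eq_pht}--\eqref{eq:bdry_con_shock_pht} on $u_0$, getting \eqref{eq:linearized_eq_pht_2d}--\eqref{eq:linearized_bdry_shock_pht_2d}, and apply the linear change of variables $Y=P\by$ to normalize the principal part to the Laplacian, reducing to the oblique-derivative Poisson problem \eqref{eq:linearized_eq_2d}--\eqref{eq:linearized_bdry_shock_2d} in the planar sector of opening $\omega_{\rm s}$. The crucial point --- the origin of the $2$-D vs.\ M-D dichotomy --- is the eigenvalue set $\Lambda=\{0\}\cup\{\,1+(m\pi+\Phi_{\rm s})/\omega_{\rm s}:m\in\Int\,\}$. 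Because $0<\omega_{\rm s}<\frac{\pi}{2}$ and $|\Phi_{\rm s}|<\frac{\pi}{2}$, the two eigenvalues adjacent to $\lambda=1$ are $0$ and $\lambda_0=1+\sigma_{\rm s}$ when $\sigma_{\rm s}>0$ (weak shock $B$), and are $\lambda_0=1+\sigma_{\rm s}$ and $\lambda_1=1+(\pi+\Phi_{\rm s})/\omega_{\rm s}>2$ when $\sigma_{\rm s}<0$ (strong shock $A$). Thus in \emph{either} case there is an open band of admissible weights containing $\sigma=0$ --- $-1<\sigma<\sigma_{\rm s}$ for $B$, and $\max\{-1,\sigma_{\rm s}\}<\sigma<(\pi+\Phi_{\rm s})/\omega_{\rm s}$ for $A$ --- so the well-posedness lemma for \eqref{eq:linearized_eq_2d}--\eqref{eq:linearized_bdry_shock_2d} established above yields, for every pair $(\sigma_0,\sigma_\infty)$ in the ranges of the statement, a unique doubly-weighted solution in $\C_{\beta_0,\beta_\infty}^{2,\alpha}(\fd)$ with the linear estimate; undoing the $P$-substitution gives the well-posedness of \eqref{eq:linearized_eq_pht_2d}--\eqref{eq:linearized_bdry_shock_pht_2d} quoted in the theorem preceding this one. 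For the strong shock the admissible upper endpoint exceeds $2$, which is exactly why $Du$ can be taken $\C^{1}$ --- indeed $\C^{1+\sigma_0}$ with $\sigma_0$ up to nearly $(\pi+\Phi_{\rm s})/\omega_{\rm s}-1$ --- near the vertex, the improved regularity asserted in the statement.

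Third, with this linear theory as the inversion step, I would run the iteration exactly as in \S7. Set $O_{\epsilon}^{(\sigma_0,\sigma_\infty)}=\{\,\dot v\in\C_{\beta_0,\beta_\infty}^{2,\alpha}(\fd):\norm{\dot v}_{(2,\alpha;\fd)}^{(\beta_0,\beta_\infty)}\le\epsilon\,\}$, write $u=u_0+\dot u$, and define $\fj:\dot v\mapsto\dot u$ by solving the linearized problem with right-hand sides $f(\dot v;\varphi_{\rm w})$ and $g_j(\dot v;\varphi_{\rm w})$ that collect the nonlinear and coefficient-perturbation remainders, organized exactly as in \eqref{eq:iteration_eq}--\eqref{eq:iteration_bdry_wedge}. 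The estimates of Lemmas~\ref{lem:iteration_welldefined} and~\ref{lem:iteration_contraction} carry over word for word --- via Proposition~\ref{prop:multiplier}, whose content is precisely that the multiplier space does not depend on the weight exponent --- giving $\norm{f(\dot v;\varphi_{\rm w})}_{(0,\alpha;\fd)}^{(\beta_0,\beta_\infty)}+\sum_{j}\norm{g_j(\dot v;\varphi_{\rm w})}_{(1,\alpha;\Gamma_j)}^{(\beta_0,\beta_\infty)}\le C(1+K^2\delta)\delta$ and the Lipschitz bound $CK\delta\,\norm{\dot v_1-\dot v_2}_{(2,\alpha;\fd)}^{(\beta_0,\beta_\infty)}$; fixing $K$ large and then $\delta$ small makes $\fj$ map $O_{K\delta}^{(\sigma_0,\sigma_\infty)}$ into itself and contract it. Its unique fixed point is the desired $u$, obeying $\norm{u-u_0}_{(2,\alpha;\fd)}^{(\beta_0,\beta_\infty)}\le K\delta$, and $\fp^{-1}$ turns it into the asserted solution of the free boundary problem, for both the strong case (1) and the weak case (2) with their respective weight ranges.

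I expect the only genuinely new --- and the hardest --- point to be the strong-shock part: in the M-D dihedral case the Maz'ya--Plamenevskij--Reisman theory supplies only one admissible interval of weights, which forces $\sigma_{\rm s}>0$ and rules out point $A$, whereas here one must exploit the \emph{countably many} admissible intervals of the angular-domain theory and pick the band between the eigenvalues flanking $\lambda=1$, checking that the stated ranges of $\sigma_\infty$ and $\sigma_0$ indeed miss $\Lambda$ (in particular that $\sigma_\infty>\sigma_{\rm s}$ is needed precisely when $\sigma_{\rm s}\in(-1,0)$). The accompanying technical nuisance is to confirm that $f(\dot v;\varphi_{\rm w})$ and $g_j(\dot v;\varphi_{\rm w})$ still lie in $\C_{\beta_0,\beta_\infty}^{0,\alpha}$ and $\C_{\beta_0,\beta_\infty}^{1,\alpha}$ when $\beta_0=1+\alpha-\sigma_0$ is small or even negative in the strong case --- which is exactly where the $\beta$-independence of $\fm\C_{\beta}^{\ell,\alpha}$ in Proposition~\ref{prop:multiplier} is used. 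Everything else is a transcription of \S5--\S7 with the $\by'$-variables removed.
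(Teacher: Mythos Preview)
Your proposal is correct and follows essentially the same approach as the paper: the paper states only that the theorem follows ``with an analogous nonlinear iteration argument as in \S7 for $n\ge3$,'' relying on the 2-D linear well-posedness (Lemma~8.1 and Theorem~8.3) obtained from Theorem~\ref{thm:angular_Laplace_Schauder} in place of Theorem~\ref{thm:bvp_wellposedness_Laplace}, together with the eigenvalue discussion in Remark~8.2. Your three-step plan and your identification of the admissible weight bands for $A$ and $B$ reproduce exactly this.
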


\begin{rem}
When $n=2$, we have the stability property
for both the strong transonic shock solution represented by $A$ and the weak
one represented by $B$, that is, for all the transonic shock solutions on the shock
polar except the critical one $S_{*}$. This result is better than
the result we have obtained for $n\geq3$ in \S 7, where
only the stability property for the weak transonic shock solution represented by $B$
is obtained.
\end{rem}

\appendix

\section{Proof of Theorem \ref{thm:bvp_wellposedness_Laplace}}

For self-containedness, in this appendix, we give a sketch of the proof of
Theorem \ref{thm:bvp_wellposedness_Laplace},
based mainly on the
results in Maz'ya, Plamenevskij, Reisman,
and others in
 \cite{KozlovMazyaRossmann1997},
\cite{MazyaPlamenevskij1971}--\cite{MazyaRossmann2010},
and \cite{Reisman1981_elliptic},
and the references therein.

\subsection{Function spaces and the equipped norms}

We first quote the weighted norms used in Maz'ya-Plamenevskij
in \cite{MazyaPlamenevskij1978}--\cite{MazyaPlamenevskij1978_Schauder}.

\subsubsection{Weighted Sobolev spaces in the dihedral angle $\fd=K\times\Real^{n-2}$.}

Let $\beta\in\Real$, $1<p<\infty$, $\ell=0,1,2,\cdots$,
and $D^{\ell}=\set{D_{\bx}^{\kk}u:\ \abs{\kk}=\ell}$.
Define the weighted Sobolev norms:
\begin{eqnarray}
\norm{u}_{V_{p,\beta}^{\ell}(\fd)}^{p}
\defs \sum_{\abs{\kk}=0}^{\ell}\int_{\fd}r^{p(\beta-\ell+\abs{\kk})}\big|D_{\bx}^{\kk}u\big|^{p}\dif\bx,
\label{eq:norms_dihedral_sobolev}
\end{eqnarray}
where $r=\sqrt{x_{1}^{2}+x_{2}^{2}}$ and $D=\left(\partial_{x_{1}},\partial_{x_{2}},\partial_{x_{3}},\cdots,\partial_{x_{n}}\right)$.

Denote by $V_{p,\beta}^{\ell}(\fd)$ the completion of
set $\C_{c}^{\infty}(\overline{\fd}\setminus\fe)$
under norm \eqref{eq:norms_dihedral_sobolev}.
Denote by $\overset{\circ}{V}_{p,\beta}^{\ell}(\fd,\Gamma^{\pm})$
the completion of set $\C_{c}^{\infty}(\fd)$ under
norm \eqref{eq:norms_dihedral_sobolev}.

Denote by $V_{p,\beta}^{\ell-1/p}(\Gamma^{\pm})$ the
space of traces on $\Gamma^{\pm}$ of the functions in $V_{p,\beta}^{\ell}(\fd)$,
that is,
\[
V_{p,\beta}^{\ell-1/p}(\Gamma^{\pm})
=V_{p,\beta}^{\ell}(\fd)/\overset{\circ}{V}_{p,\beta}^{\ell}(\fd,\Gamma^{\pm}).
\]
The corresponding trace norm is defined as
\begin{equation}
\norm{u}_{V_{p,\beta}^{\ell-1/p}(\Gamma^{\pm})}
\defs\inf\set{\norm{v}_{V_{p,\beta}^{\ell}(\fd)}:\ v-u\in\overset{\circ}{V}_{p,\beta}^{\ell}(\fd,\Gamma^{\pm})}.
\label{eq:norms_dihedral_trace}
\end{equation}

\smallskip
\subsubsection{The first type of weighted Sobolev spaces in the angular domain $K$.}

If $n=2$, the dihedral angle $\fd$ becomes an angular domain $K$,
and the edge $\Real^{n-2}$ shrinks to a point. In this case, we can
also define analogous weighted Sobolev spaces and norms in the angular
domain $K$, with $\by=(y_{1},y_{2})^{\top}\in K$.

Define
\begin{equation}
\norm{u}_{V_{p,\beta}^{\ell}(K)}^{p}
\defs\sum_{\abs{\kk}=0}^{\ell}\int_{K}r^{p(\beta-\ell+\abs{\kk})}
\big|D_{\by}^{\kk}u\big|^{p}\dif\by,
\label{eq:norms_angular_sobolev_one}
\end{equation}
where $r^{2}=y_{1}^{2}+y_{2}^{2}=\abs{\by}^{2}$
and $D_{\by}^{\kk}=\partial_{y_{1}}^{k_{1}}\partial_{y_{2}}^{k_{2}}$.
Note that, by applying the blow-up transformation $\fb:\ t=\ln r$,
$K$ becomes an infinite strip:
\[
\fs\defs\set{(t,\omega):\ t\in\Real,\ -\frac{\omega_{*}}{2}<\omega<\frac{\omega_{*}}{2}},
\]
and
\[
\norm{u}_{V_{p,\beta}^{\ell}(K)}\approx\norm{\me^{-\sigma t}u(\me^{t},\omega)}_{W_{p}^{\ell}(\fs)},
\]
where $-\sigma=\beta-\ell+\frac{2}{p}$.

Denote by $V_{p,\beta}^{\ell}(K)$ the completion of
set $\C_{c}^{\infty}(\overline{K}\setminus\set{O})$ under
norm \eqref{eq:norms_angular_sobolev_one}.
Denote by $\overset{\circ}{V}_{p,\beta}^{\ell}(K,\gamma^{\pm})$
the completion of set $\C_{c}^{\infty}(K)$ under
norm \eqref{eq:norms_angular_sobolev_one}.

Denote by $V_{p,\beta}^{\ell-1/p}(\gamma^{\pm})$ the
space of traces on $\gamma^{\pm}$ of the functions in $V_{p,\beta}^{\ell}(K)$,
that is,
\[
V_{p,\beta}^{\ell-1/p}(\gamma^{\pm})
=V_{p,\beta}^{\ell}(K)/\overset{\circ}{V}_{p,\beta}^{\ell}(K,\gamma^{\pm}).
\]
The corresponding trace norm is defined as
\begin{equation}
\norm{u}_{V_{p,\beta}^{\ell-1/p}(\gamma^{\pm})}
\defs\inf\set{\norm{v}_{V_{p,\beta}^{\ell}(K)}:\ v-u\in\overset{\circ}{V}_{p,\beta}^{\ell}(K,\gamma^{\pm})}.
\label{eq:norms_angular_trace}
\end{equation}

\subsubsection{The second type of weighted Sobolev spaces in the angular domain $K$.}

In the analysis, we also need the following weighted norms in $K$:
\begin{eqnarray}
\norm{u}_{E_{p,\beta}^{\ell}\left(K\right)}^{p}
& \defs & \sum_{\abs{\kk}=0}^{\ell}\int_{K}r^{p\beta}\,\big(1+r^{p(\abs{\kk}-\ell)}\big)\big|D_{\by}^{\kk}u\big|^{p}
\dif\by.
\label{eq:norms_angular_sobolev_double}
\end{eqnarray}
Then we also define the related function spaces and traces.
Denote
by $E_{p,\beta}^{\ell}(K)$ the completion of set
$\C_{c}^{\infty}(\overline{K}\setminus\set{O})$
under norm \eqref{eq:norms_angular_sobolev_double}.
Denote by
$\overset{\circ}{E}_{p,\beta}^{\ell}(K,\gamma^{\pm})$
the completion of set $\C_{c}^{\infty}(K)$ under
norm \eqref{eq:norms_angular_sobolev_double}.

Denote by $E_{p,\beta}^{\ell-1/p}(\gamma^{\pm})$ the
space of traces on $\gamma^{\pm}$
of the functions in $E_{p,\beta}^{\ell}(K)$,
that is,
\[
E_{p,\beta}^{\ell-1/p}(\gamma^{\pm})
=E_{p,\beta}^{\ell}(K)/\overset{\circ}{E}_{p,\beta}^{\ell}(K,\gamma^{\pm}).
\]
The corresponding trace norm is defined as
\begin{equation}
\norm{u}_{E_{p,\beta}^{\ell-1/p}(\gamma^{\pm})}
\defs\inf\set{\norm{v}_{E_{p,\beta}^{\ell}(K)}:\ v-u\in\overset{\circ}{E}_{p,\beta}^{\ell}(K,\gamma^{\pm})}.
\label{eq:norms_angular_trace_double}
\end{equation}

\subsection{Sketch of the proof of Theorem \ref{thm:bvp_wellposedness_Laplace}}
Now we specify the major steps of the proof of Theorem \ref{thm:bvp_wellposedness_Laplace}
and the main consequences of each step.

\subsubsection{Step 1: The well-posed theory for the homogeneous
operator $\left(-\triangle_{Y},\ \fp^{\pm}\left(D_{Y};0\right)\right)$
in the angular domain $K$.}

Consider the boundary value problem of the homogeneous
operator $\left(-\triangle_{Y},\ \fp^{\pm}\left(D_{Y};0\right)\right)$
in $K$, with $Y=\left(Y_{1},Y_{2}\right)^{\top}\in K$:
\begin{align}
 & \triangle_{Y}U\left(Y\right)=F\left(Y\right) & \text{ in }K,\label{eq:angular_Laplace_reduced-1-1_homo}\\
 & \fp^{\pm}\left(D_{Y};0\right)U\left(Y\right)=G^{\pm}\left(Y\right) & \text{ on }\gamma^{\pm},
 \label{eq:angular_bdry_conds_reduced-1-1_homo}
\end{align}
where $\fp^{\pm}\left(D_{Y};0\right)=\partial_{\bnu^{\pm}}+\alpha^{\pm}\partial_{\btau^{\pm}}$.

The $L^{2}$ well-posedness theory for boundary value problems for elliptic equations
in an angular or conical domain was established in Kondrat'ev \cite{Kondratev1967},
and later it was improved to
the $L^{p}$ and H\"older well-posedness in Maz'ya-Plamenevskij \cite{MazyaPlamenevskij1978-MN}.
We now employ their results to obtain the well-posedness
of the solutions to
problem \eqref{eq:angular_Laplace_reduced-1-1_homo}--\eqref{eq:angular_bdry_conds_reduced-1-1_homo}.
Here we only introduce the key conditions and theorems.

Let $(r,\omega)$ be the polar coordinates on $K$. Then
we have
\begin{eqnarray*}
&&\triangle_{Y} \defs \frac{1}{r^{2}}P(\partial_{\omega},r\partial_{r})
  =\frac{1}{r^{2}}\set{\left(r\partial_{r}\right)^{2}+\left(\partial_{\omega}\right)^{2}},\\
&&\fp^{\pm}\left(D_{Y};0\right) \defs \frac{1}{r}P^{\pm}(\partial_{\omega},r\partial_{r})
  =\frac{1}{r}\set{\mp\partial_{\omega}+\alpha^{\pm}\left(r\partial_{r}\right)}.
\end{eqnarray*}
Set $v=r^{-\sigma}U$, and apply transformation $\fb$.
Then the boundary value problem \eqref{eq:angular_Laplace_reduced-1-1_homo}--\eqref{eq:angular_bdry_conds_reduced-1-1_homo}
becomes
\begin{align}
 & \partial_{\omega\omega}v+\partial_{tt}v+2\sigma\partial_{t}vc+\sigma^{2}v=\me^{\left(2-\sigma\right)t}F\defs f & \text{ in }  \fs,
   \label{eq:strip_Laplace_weighted}\\
 & \mp\partial_{\omega}v+\alpha^{\pm}\partial_{t}v+\sigma\alpha^{\pm}v=\me^{\left(1-\sigma\right)t}G^{\pm}\defs g^{\pm} &\, \text{ on } \fb\gamma^{\pm}.
   \label{eq:strip_bdry_weighted}
\end{align}
Applying the Fourier transform with respect to $t\sTo\xi$ to problem \eqref{eq:strip_Laplace_weighted}--\eqref{eq:strip_bdry_weighted},
we obtain a boundary value problem of an ordinary differential equation
with parameter $\lambda=\sigma+\mi\xi$:
\begin{align}
 & \partial_{\omega\omega}\hat{v}+\left(\sigma+\mi\xi\right)^{2}\hat{v}=\hat{f} & \text{ in } & \omega^{-}<\omega<\omega^{+},
  \label{eq:strip_ode_para}\\
 & \mp\partial_{\omega}\hat{v}+\left(\sigma+\mi\xi\right)\alpha^{\pm}\hat{v}=\hat{g}^{\pm} & \text{ on } & \omega=\omega^{\pm},
  \label{eq:strip_bdry_para}
\end{align}
for given $\sigma$ and any $\xi\in\Real$.
In order to apply the inverse Fourier transform, we need the existence and uniqueness of
solutions to the boundary value problems \eqref{eq:strip_ode_para}--\eqref{eq:strip_bdry_para}
for any $\xi\in\Real$.
Thus, in the case that the homogeneous problem
of \eqref{eq:strip_ode_para}--\eqref{eq:strip_bdry_para} does not
have nontrivial solutions, we can employ the inverse Fourier transform to obtain
a solution $v$ to problem \eqref{eq:strip_Laplace_weighted}--\eqref{eq:strip_bdry_weighted}.
Then $U=r^{\sigma}v$ is the solution to
problem \eqref{eq:angular_Laplace_reduced-1-1_homo}--\eqref{eq:angular_bdry_conds_reduced-1-1_homo}.
The complex number $\lambda=\sigma+\mi\xi$ is called an \emph{eigenvalue}
for problem \eqref{eq:strip_ode_para}--\eqref{eq:strip_bdry_para}
if the homogeneous problem of \eqref{eq:strip_ode_para}--\eqref{eq:strip_bdry_para}
has nontrivial solutions.
It can be checked that a complex number
$\lambda$ is an eigenvalue for  problem \eqref{eq:strip_ode_para}--\eqref{eq:strip_bdry_para}
if and only if
\begin{align*}
\lambda=0, \qquad \text{ or}\qquad \lambda_{m}=\frac{m\pi-\Phi}{\omega_{*}},\  m\in\Int,
\end{align*}
where $\Phi=\arctan\alpha^{-}+\arctan\alpha^{+}$.
Define the following set $\Lambda$ to be the collection of the above eigenvalues:
\begin{equation}
\Lambda\defs\{0\}\cup\big\{\frac{m\pi-\Phi}{\omega_{*}}\,:\, m\in\Int\big\}.
\label{eq:strip_spectrum_set}
\end{equation}
Therefore, we can fulfill the above argument for $\sigma$ satisfying
\begin{equation}
\sigma\not\in\Lambda.\label{eq:strip_weights_possible}
\end{equation}

With the above calculations, Theorems 4.1--4.2 in Maz'ya-Plamenevskij \cite{MazyaPlamenevskij1978-MN}
directly lead to the following theorem.

\begin{thm}
\label{thm:angular_Laplace}
Let $1<p<\infty$, $\sigma\in\Real$, $\ell=0,1,2,\cdots$, and $\beta=\ell+2-\frac{2}{p}-\sigma$.
Let $F\in V_{p,\beta}^{\ell}(K)$ and $G^{\pm}\in V_{p,\beta}^{\ell+1-1/p}(\gamma^{\pm})$.
Then the boundary value
problem \eqref{eq:angular_Laplace_reduced-1-1_homo}--\eqref{eq:angular_bdry_conds_reduced-1-1_homo}
has a unique solution $u\in V_{p,\beta}^{\ell+2}(K)$ for
all $F$ and $G^{\pm}$ if and only if the line:
\[
\Re\lambda=\sigma
\]
contains no eigenvalues of problem \eqref{eq:strip_ode_para}--\eqref{eq:strip_bdry_para},
that is, $\sigma$ satisfies \eqref{eq:strip_weights_possible}. Moreover,
when \eqref{eq:strip_weights_possible} is satisfied, the following estimate holds{\rm :}
\begin{equation}
\norm{\me^{-\sigma t}u}_{W_{p}^{\ell+2}(\fs)}
\approx\norm{u}_{V_{p,\beta}^{\ell+2}(K)}
\leq C\Big(\|F\|_{V_{p,\beta}^{\ell}(K)}
 +\sum_{\pm}\|G^{\pm}\|_{V_{p,\beta}^{\ell+1-1/p}(\gamma^{\pm})}\Big).
 \label{eq:estimates_auxiliary_problem}
\end{equation}
That is, operator $\left(-\triangle_{Y},\ \fp^{\pm}\left(D_{Y};0\right)\right)$
of problem \eqref{eq:angular_Laplace_reduced-1-1_homo}--\eqref{eq:angular_bdry_conds_reduced-1-1_homo}
induces an isomorphism:
\[
V_{p,\beta}^{\ell+2}\left(K\right)\approx V_{p,\beta}^{\ell}\left(K\right)\times\prod_{\pm}V_{p,\beta}^{\ell+1-1/p}\left(\gamma^{\pm}\right).
\]
\end{thm}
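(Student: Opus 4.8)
The plan is to reduce problem \eqref{eq:angular_Laplace_reduced-1-1_homo}--\eqref{eq:angular_bdry_conds_reduced-1-1_homo} to a one-dimensional boundary value problem depending on a complex parameter, by the logarithmic change of variables and the Fourier transform in the radial direction, to compute the eigenvalues of the resulting operator pencil explicitly, and then to read off the assertion from the $L^{p}$ and Schauder theory for elliptic problems in angular domains of Kondrat'ev \cite{Kondratev1967} and Maz'ya--Plamenevskij \cite{MazyaPlamenevskij1978-MN}. First I would record the reductions already displayed above: extracting the weight by $v=r^{-\sigma}U$ and applying the blow-up $\fb:\ t=\ln r$ carries $K$ onto the strip $\fs$, transforms problem \eqref{eq:angular_Laplace_reduced-1-1_homo}--\eqref{eq:angular_bdry_conds_reduced-1-1_homo} into problem \eqref{eq:strip_Laplace_weighted}--\eqref{eq:strip_bdry_weighted}, and yields the norm equivalences $\norm{\me^{-\sigma t}u}_{W_{p}^{\ell+2}(\fs)}\approx\norm{u}_{V_{p,\beta}^{\ell+2}(K)}$ with $\beta=\ell+2-2/p-\sigma$, together with the analogous equivalences for $F$, $G^{\pm}$ and for the quotient trace spaces $V_{p,\beta}^{\ell+1-1/p}(\gamma^{\pm})$. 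A further Fourier transform $t\mapsto\xi$ then produces the parameter-dependent ordinary differential problem \eqref{eq:strip_ode_para}--\eqref{eq:strip_bdry_para} with pencil parameter $\lambda=\sigma+\mi\xi$.

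The core step is the algebraic computation of the eigenvalues. For $\lambda\not=0$ the homogeneous equation $\partial_{\omega\omega}\hat{v}+\lambda^{2}\hat{v}=0$ has general solution $\hat{v}(\omega)=a\cos(\lambda\omega)+b\sin(\lambda\omega)$; substituting into the oblique conditions $\mp\partial_{\omega}\hat{v}+\lambda\alpha^{\pm}\hat{v}=0$ at $\omega=\omega^{\pm}=\pm\omega_{*}/2$, dividing by $\lambda$, and using $\sin(\lambda\omega^{-})=-\sin(\lambda\omega^{+})$ and $\cos(\lambda\omega^{-})=\cos(\lambda\omega^{+})$, one obtains a homogeneous $2\times2$ linear system for $(a,b)$ whose determinant equals $(1-\alpha^{+}\alpha^{-})\sin(\lambda\omega_{*})+(\alpha^{+}+\alpha^{-})\cos(\lambda\omega_{*})$; multiplying by $\cos(\arctan\alpha^{+})\cos(\arctan\alpha^{-})$ and using the tangent-addition identity, this becomes $\sin(\lambda\omega_{*}+\Phi)$ with $\Phi=\arctan\alpha^{-}+\arctan\alpha^{+}$, which vanishes exactly when $\lambda=(m\pi-\Phi)/\omega_{*}$, $m\in\Int$. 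The excluded case $\lambda=0$, treated separately with $\hat{v}=a+b\omega$, forces $b=0$ but leaves $a$ free, so $\lambda=0$ is always an eigenvalue; this yields the set $\Lambda$ in \eqref{eq:strip_spectrum_set}. Since $\Lambda\subset\Real$, the line $\Re\lambda=\sigma$ carries an eigenvalue iff $\sigma\in\Lambda$, which is precisely the negation of \eqref{eq:strip_weights_possible}; moreover the explicit solution formula for the resolvent gives the polynomial bounds on $\A(\lambda)^{-1}$ and its $\xi$-derivatives, uniform on $\Re\lambda=\sigma$ when $\sigma\notin\Lambda$.

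For the sufficiency direction I would then invoke the abstract theory: the Agmon--Douglis--Nirenberg ellipticity of $\big(-\triangle_{Y},\fp^{\pm}(D_{Y};0)\big)$ together with the uniform invertibility of $\A(\lambda)$ on $\Re\lambda=\sigma$ are exactly the hypotheses of Theorems 4.1--4.2 of \cite{MazyaPlamenevskij1978-MN}, which deliver the isomorphism on the strip in $W_{p}^{\ell+2}(\fs)$ and the corresponding estimate; transporting back through $\fb^{-1}$ and the weight $r^{\sigma}$ produces the isomorphism $V_{p,\beta}^{\ell+2}(K)\approx V_{p,\beta}^{\ell}(K)\times\prod_{\pm}V_{p,\beta}^{\ell+1-1/p}(\gamma^{\pm})$ and estimate \eqref{eq:estimates_auxiliary_problem}, for every $\ell=0,1,2,\dots$. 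For the necessity direction, if $\sigma=\lambda_{0}\in\Lambda$, then the homogeneous solution $U_{0}=r^{\lambda_{0}}\phi_{0}(\omega)$ (with an extra $\ln r$ factor when $\lambda_{0}$ has higher multiplicity) just fails to belong to $V_{p,\beta}^{\ell+2}(K)$, the relevant integral diverging only logarithmically, and the standard Kondrat'ev argument then produces either a nontrivial approximate kernel or a cokernel, so the operator cannot be an isomorphism.

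The main obstacle is not the eigenvalue computation, which is elementary linear algebra, but the uniform-in-$\xi$ resolvent bounds for the ODE pencil carrying enough $\xi$-derivatives to run an $L^{p}$ (rather than merely $L^{2}$) Fourier-multiplier argument on the strip, together with the precise matching of the trace norms under the two transformations. This is exactly the technical content packaged in \cite{MazyaPlamenevskij1978-MN}, so the cleanest route is to verify their hypotheses — the ADN ellipticity and the condition that $\Lambda$ avoids the weight line — and then cite their theorems, rather than reproving the multiplier estimates here.
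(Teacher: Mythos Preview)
Your proposal is correct and follows essentially the same route as the paper: the paper simply records the reductions \eqref{eq:strip_Laplace_weighted}--\eqref{eq:strip_bdry_para}, computes the eigenvalue set $\Lambda$ in \eqref{eq:strip_spectrum_set}, and then states that Theorems~4.1--4.2 of Maz'ya--Plamenevskij \cite{MazyaPlamenevskij1978-MN} \emph{directly lead} to the result. You have supplied more detail than the paper does --- in particular the explicit $2\times2$ determinant computation and a sketch of the necessity direction --- but the strategy is identical.
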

Theorem \ref{thm:angular_Laplace} presents the $L^{p}$ well-posedness
for operator $\left(-\triangle_{Y},\ \fp^{\pm}\left(D_{Y};0\right)\right)$
of problem \eqref{eq:angular_Laplace_reduced-1-1_homo}--\eqref{eq:angular_bdry_conds_reduced-1-1_homo}.
The Schauder well-posedness for this problem has also been established in
\cite[Theorems 5.1--5.2]{MazyaPlamenevskij1978-MN}, which
leads to the following theorem.

\begin{thm}
\label{thm:angular_Laplace_Schauder}
Suppose that $\sigma$ satisfies \eqref{eq:strip_weights_possible}. Then
operator $\left(-\triangle_{Y},\ \fp^{\pm}\left(D_{Y};0\right)\right)$
of problem \eqref{eq:angular_Laplace_reduced-1-1_homo}--\eqref{eq:angular_bdry_conds_reduced-1-1_homo}
induces an isomorphism
$\C_{\beta}^{\ell+2,\alpha}\left(K\right)\approx\C_{\beta}^{\ell,\alpha}\left(K\right)\times\prod_{\pm}\C_{\beta}^{\ell+1,\alpha}\left(\gamma^{\pm}\right)$
for $\beta=\ell+2+\alpha-\sigma$.

Moreover, let $\underline{\sigma}<\overline{\sigma}$ be two real
numbers satisfying that strip $\underline{\sigma}<\Re\lambda<\overline{\sigma}$
in the complex plane $\Comp$ contains no eigenvalues of problem
\eqref{eq:strip_ode_para}--\eqref{eq:strip_bdry_para}.
Assume $\sigma_{1}$,$\sigma_{2}\in\left(\underline{\sigma},\overline{\sigma}\right)$,
$\beta_{j}=\ell+2+\alpha-\sigma_{j}$, and
\[
f\in\C_{\beta_{1},\beta_{2}}^{\ell,\alpha}(K),
\qquad g^{\pm}\in\C_{\beta_{1},\beta_{2}}^{\ell+1,\alpha}(\gamma^{\pm}).
\]
Then there exists a unique solution $u\in\C_{\beta_{1},\beta_{2}}^{\ell+2,\alpha}(K)$
of problem \eqref{eq:angular_Laplace_reduced-1-1_homo}--\eqref{eq:angular_bdry_conds_reduced-1-1_homo}
with the following estimate:
\[
\norm{u}_{(2,\alpha;K)}^{(\beta_{1},\beta_{2})}
\leq C\Big(\|f\|_{(0,\alpha;K)}^{(\beta_{1},\beta_{2})}
+\sum_{\pm}\|g^{\pm}\|_{(1,\alpha;\gamma^{\pm})}^{(\beta_{1},\beta_{2})}\Big).
\]
\end{thm}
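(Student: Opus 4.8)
The plan is to obtain the single-weight isomorphism by a direct appeal to the Schauder well-posedness theory of Maz'ya--Plamenevskij \cite[Theorems 5.1--5.2]{MazyaPlamenevskij1978-MN} for the model pencil $\bigl(-\triangle_{Y},\ \fp^{\pm}(D_{Y};0)\bigr)$ on the angular domain $K$, and then to deduce the ``moreover'' part by gluing the two single-weight solutions together by means of a Kondrat'ev-type shift theorem combined with the uniqueness already contained in the first part.

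For the first assertion I would recall the reduction set up above: in polar coordinates $(r,\omega)$ the logarithmic substitution $t=\ln r$ maps $K$ onto the strip $\fs$, and setting $v=r^{-\sigma}U$ removes the weight at the price of the lower-order terms appearing in \eqref{eq:strip_Laplace_weighted}--\eqref{eq:strip_bdry_weighted}. The Fourier transform in $t$ then produces the one-parameter family of two-point boundary value problems \eqref{eq:strip_ode_para}--\eqref{eq:strip_bdry_para} with spectral parameter $\lambda=\sigma+\mi\xi$. Condition \eqref{eq:strip_weights_possible}, namely $\sigma\notin\Lambda$, says precisely that the line $\Re\lambda=\sigma$ carries no eigenvalue of that family, so each member is uniquely solvable and the resolvent depends holomorphically on $\lambda$ in a strip about this line, with the polynomial growth in $\xi$ needed to invert the transform. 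Combining this with the translation-invariant interior and boundary (oblique-derivative) Schauder estimates on $\fs$, whose constants are uniform with respect to the $t$-translation, yields the a priori bound in the exponentially weighted H\"older norm on $\fs$, which by the equivalence $\norm{u}_{\C_{\beta}^{\ell+2,\alpha}(K)}\approx\norm{\me^{-\sigma t}u(\me^{t},\omega)}_{C^{\ell+2,\alpha}(\fs)}$ with $\beta=\ell+2+\alpha-\sigma$ is exactly the asserted estimate on $K$; surjectivity and injectivity follow along the same lines, giving the isomorphism. Once the cited results are invoked, this part is essentially bookkeeping.

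For the ``moreover'' part, assume without loss of generality that $\sigma_{1}<\sigma_{2}$, so $\beta_{1}>\beta_{2}$. Since $\sigma_{1},\sigma_{2}\in(\underline{\sigma},\overline{\sigma})$ and this open interval is eigenvalue-free, in particular $\sigma_{1},\sigma_{2}\notin\Lambda$, so the first part applies to each exponent and yields unique solutions $u^{(1)}\in\C_{\beta_{1}}^{\ell+2,\alpha}(K)$ and $u^{(2)}\in\C_{\beta_{2}}^{\ell+2,\alpha}(K)$ of problem \eqref{eq:angular_Laplace_reduced-1-1_homo}--\eqref{eq:angular_bdry_conds_reduced-1-1_homo} with the given data, each obeying the corresponding single-weight estimate. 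It remains to check that $u^{(1)}=u^{(2)}$. As $r\sTo\infty$ the $\beta_{1}$-bound $\abs{D^{\kk}u^{(1)}}\leq Cr^{\sigma_{1}-\abs{\kk}}$ already implies the $\beta_{2}$-bound because $\sigma_{1}<\sigma_{2}$; near the vertex $O$ I invoke the Kondrat'ev/Maz'ya--Plamenevskij asymptotic expansion (see \cite{Kondratev1967,KozlovMazyaRossmann1997,MazyaPlamenevskij1978-MN}): a solution lying in $\C_{\beta_{1}}^{\ell+2,\alpha}$ whose data lie in $\C_{\beta_{2}}^{\ell,\alpha}(K)\times\prod_{\pm}\C_{\beta_{2}}^{\ell+1,\alpha}(\gamma^{\pm})$ differs from an element of $\C_{\beta_{2}}^{\ell+2,\alpha}$ only by a finite linear combination of terms $r^{\lambda_{m}}(\text{polynomial in }\ln r)$ over the eigenvalues $\lambda_{m}$ with $\sigma_{1}<\Re\lambda_{m}<\sigma_{2}$; the eigenvalue-free hypothesis makes this combination empty, so $u^{(1)}\in\C_{\beta_{2}}^{\ell+2,\alpha}(K)$, and then uniqueness in $\C_{\beta_{2}}^{\ell+2,\alpha}(K)$ (the first part with $\sigma=\sigma_{2}$) forces $u^{(1)}=u^{(2)}$. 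Hence $u:=u^{(1)}=u^{(2)}$ lies in $\C_{\beta_{1}}^{\ell+2,\alpha}(K)\cap\C_{\beta_{2}}^{\ell+2,\alpha}(K)=\C_{\beta_{1},\beta_{2}}^{\ell+2,\alpha}(K)$, and adding the two single-weight estimates gives the claimed two-weight bound.

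I expect the main obstacle to be the justification of the asymptotic-expansion/shift step: it rests on the full local Kondrat'ev machinery at the conical point $O$ (Mellin transform together with a residue computation that picks up exactly the eigenvalues in the relevant strip), the matching control as $r\sTo\infty$, and in particular the verification that the oblique boundary operators $\fp^{\pm}(D_{Y};0)$ satisfy the Lopatinskii--Shapiro covering condition so that these results are applicable. Everything else --- the passage to the strip, the Fourier/Mellin inversion with the attendant resolvent estimates, and the norm equivalences --- is routine and already carried out in the references cited.
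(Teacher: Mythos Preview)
Your proposal is correct and matches the paper's approach: the paper does not give an independent proof of this theorem at all, but simply states that ``the Schauder well-posedness for this problem has also been established in \cite[Theorems~5.1--5.2]{MazyaPlamenevskij1978-MN}, which leads to the following theorem,'' and your sketch is precisely an unpacking of that citation (Mellin/blow-up reduction to the strip, spectral condition~\eqref{eq:strip_weights_possible}, and the Kondrat'ev shift/asymptotic-expansion argument for the double-weight statement). Your ``moreover'' argument in fact supplies more detail than the paper does, and is the standard way to pass from the single-weight isomorphism to the two-weight estimate when the strip $(\underline{\sigma},\overline{\sigma})$ is eigenvalue-free.
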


\subsubsection{Step 2: Fredholm property of the nonhomogeneous
operator $(-\triangle_{Y}+1,\ \fp^{\pm}(D_{Y};\btheta))$
in the angular domain $K$.}

In this step, we consider the boundary value problem for the nonhomogeneous
operator $\fa\left(\btheta\right)\defs(-\triangle_{Y}+1,\ \fp^{\pm}(D_{Y};\btheta))$:\textcolor{black}{
\begin{align}
 & -\triangle_{Y}U(Y)+U(Y)=F(Y) & \text{ in }K,\label{eq:angular_Laplace_reduced-1-1}\\
 & \fp^{\pm}(D_{Y};\btheta)U(Y)=G^{\pm}(Y) & \text{ on }\gamma^{\pm},
 \label{eq:angular_bdry_conds_reduced-1-1}
\end{align}
where $\fp^{\pm}\left(D_{Y};\btheta\right)\defs\partial_{\bnu^{\pm}}+\alpha^{\pm}\partial_{\btau^{\pm}}+\mi\btheta\cdot\bc^{\pm}$
with $\btheta\in S^{n-3}$, the unit sphere in $\Real^{n-2}$. }

Different from the homogeneous operator $\left(-\triangle_{Y},\ \fp^{\pm}\left(D_{Y};0\right)\right)$,
$\fa\left(\btheta\right)$ does not induce an isomorphism in general,
unlike in Theorem \ref{thm:angular_Laplace} or Theorem \ref{thm:angular_Laplace_Schauder}
under the condition that $\sigma$ satisfies \eqref{eq:strip_weights_possible}.
In fact,  operator $\fa\left(\btheta\right)$ induces an Fredholm
operator, as indicated in the following theorem shown in Maz'ya-Plamenevskij \cite{MazyaPlamenevskij1978}:

\begin{thm}
\label{thm:angular_Laplace_nonhomo_Fredholm}
Suppose that $\sigma$ satisfies \eqref{eq:strip_weights_possible}, that
is, the line:
\[
\Re\lambda=\sigma
\]
contains no eigenvalues of problem \eqref{eq:strip_ode_para}--\eqref{eq:strip_bdry_para}.
Then the operator induced by the boundary value
problem \eqref{eq:angular_Laplace_reduced-1-1}--\eqref{eq:angular_bdry_conds_reduced-1-1}
with $\beta=\ell+2-\frac{2}{p}-\sigma${\rm :}
\begin{equation}
\fa(\btheta):\ E_{p,\beta}^{2}(K)\sTo E_{p,\beta}^{0}(K)\times\prod\limits _{\pm}E_{p,\beta}^{1-1/p}(\gamma^{\pm})
\label{eq:operator_Laplace_nonhomo}
\end{equation}
is a Fredholm operator for all $\btheta\in S^{n-3}$.
\end{thm}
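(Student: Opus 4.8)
The plan is to establish, uniformly in $\btheta\in S^{n-3}$, an \emph{a priori} estimate with a compact remainder for $\fa(\btheta)$ and, separately, for its formal adjoint; the Fredholm property then follows from standard functional analysis. The point of departure is that the norm of $E_{p,\beta}^{\ell}(K)$ is equivalent, on any bounded neighbourhood of the vertex $O$, to the norm of the Kondrat'ev space $V_{p,\beta}^{\ell}(K)$---which is adapted to the scale-invariant operator $\big(-\triangle_{Y},\fp^{\pm}(D_{Y};0)\big)$ and for which Theorem~\ref{thm:angular_Laplace} provides an isomorphism precisely when $\sigma\notin\Lambda$---and, on the exterior of any ball, to the uniformly weighted norm $\big(\sum_{|\kk|\le\ell}\int_{K}r^{p\beta}\,\big|D_{\by}^{\kk}u\big|^{p}\,\dif\by\big)^{1/p}$, which is the norm adapted to the coercive operator $-\triangle_{Y}+1$.

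The estimate is assembled with a partition of unity $1=\chi_{0}+\chi_{1}+\chi_{\infty}$ on $\overline{K}\setminus\set{O}$ with $\supp\chi_{0}\subset\set{r<r_{0}}$, $\supp\chi_{\infty}\subset\set{r>R_{0}}$, and $\supp\chi_{1}$ inside a fixed bounded annulus, where $0<r_{0}\ll1\ll R_{0}$. For $\chi_{0}U$, rescaling $Y\mapsto\epsilon Y$ turns $-\triangle_{Y}+1$ into $-\triangle_{Y}+\epsilon^{2}$ and $\fp^{\pm}(D_{Y};\btheta)$ into $\fp^{\pm}(D_{Y};0)+\epsilon\,\mi\,\btheta\cdot\bc^{\pm}$, so that for $r_{0}$ small the homogeneous isomorphism of Theorem~\ref{thm:angular_Laplace} absorbs the extra zeroth order terms, of sizes $O(\epsilon^{2})$ and $O(\epsilon)$, the commutators $[\triangle_{Y},\chi_{0}]$ and $[\fp^{\pm},\chi_{0}]$ being supported in a bounded collar. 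For $\chi_{\infty}U$, the constant-coefficient problem $\big(-\triangle_{Y}+1,\fp^{\pm}(D_{Y};\btheta)\big)$ is coercive, so on each dyadic shell $\set{2^{k}<r<2^{k+1}}$, on which $r^{p\beta}$ is essentially constant, the standard $L^{p}$ estimate for the oblique derivative problem holds with a constant independent of $k$ and of $\btheta$; summing over $k$ leaves no remainder from infinity. Finally, on $\supp\chi_{1}$ the Agmon--Douglis--Nirenberg estimate on a bounded Lipschitz domain contributes only $\norm{U}_{L^{p}}$ over a fixed bounded set. Combining these, one obtains, uniformly in $\btheta$,
\begin{equation}\label{eq:fredholm_apriori}
\norm{U}_{E_{p,\beta}^{2}(K)}\le C\Big(\norm{\fa(\btheta)U}_{E_{p,\beta}^{0}(K)\times\prod_{\pm}E_{p,\beta}^{1-1/p}(\gamma^{\pm})}+\norm{U}_{L^{p}(\Omega_{0})}\Big),
\end{equation}
where $\Omega_{0}\defs K\cap\set{r_{0}/4<r<4R_{0}}$ is bounded. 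Since $E_{p,\beta}^{2}(K)\Embed W^{2,p}(\Omega_{0})$ and $W^{2,p}(\Omega_{0})$ embeds compactly into $L^{p}(\Omega_{0})$ by Rellich--Kondrachov, \eqref{eq:fredholm_apriori} shows that $\fa(\btheta)$ has finite-dimensional kernel and closed range.

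It remains to control the cokernel. The formal adjoint of $\fa(\btheta)$ with respect to the natural pairing is again a boundary value problem of exactly the same structural type---interior operator $-\triangle_{Y}+1$, oblique/Robin conditions on $\gamma^{\pm}$---acting between the corresponding dual weighted spaces; and a duality computation for the scales $V_{p,\beta}^{\ell}(K)$, $E_{p,\beta}^{\ell}(K)$ shows that the admissibility condition $\sigma\notin\Lambda$ is equivalent to the analogous condition for the dual weight and the adjoint spectral pencil. Hence the adjoint likewise satisfies an estimate of the form \eqref{eq:fredholm_apriori}, so $\coker\fa(\btheta)$, being isomorphic to the kernel of the adjoint, is finite-dimensional. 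Therefore $\fa(\btheta)$ is a Fredholm operator for every $\btheta\in S^{n-3}$, and all the constants above can be taken independent of $\btheta$, since $\fa(\btheta)$ depends polynomially on $\btheta$ and $S^{n-3}$ is compact.

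The main obstacle in this scheme is the analysis near the vertex $O$: one must verify that the isomorphism of the \emph{homogeneous} operator furnished by Theorem~\ref{thm:angular_Laplace}, under the hypothesis $\sigma\notin\Lambda$, persists for $\fa(\btheta)$ on the $E$-spaces in spite of the non-scale-invariant terms $+U$ and $\mi\,\btheta\cdot\bc^{\pm}$, and that the resulting estimate is genuinely uniform in the parameter $\btheta$. By contrast, the behaviour at infinity is comparatively harmless, precisely because the zeroth order term $+1$ makes $-\triangle_{Y}+1$ coercive there, so that no spectral obstruction analogous to $\Lambda$ arises---this is exactly the structural reason for working with the spaces $E_{p,\beta}^{\ell}$, rather than $V_{p,\beta}^{\ell}$, in the dihedral problem.
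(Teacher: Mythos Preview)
The paper does not supply its own proof of this theorem; immediately after the statement it simply records that ``a more general theorem has been proved in \cite{MazyaPlamenevskij1978} for elliptic operators with higher order'' and moves on. Your sketch is, in outline, precisely the standard argument underlying that reference: localize with a partition of unity, invoke the isomorphism for the scale-invariant model operator (Theorem~\ref{thm:angular_Laplace}) near the vertex where the lower-order terms $+U$ and $\mi\,\btheta\cdot\bc^{\pm}$ scale away, use coercivity of $-\triangle_{Y}+1$ at infinity via a dyadic decomposition in $r$, and patch these with classical Agmon--Douglis--Nirenberg estimates on the intermediate annulus to obtain an \emph{a priori} estimate with compact remainder; then argue dually for the cokernel. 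This is correct, and your closing remark about why the $E$-scale rather than the $V$-scale is the right one here is exactly the structural point. The one place that would need more explicit bookkeeping in a full write-up is the adjoint step: the formal adjoint of the oblique condition $\partial_{\bnu^{\pm}}+\alpha^{\pm}\partial_{\btau^{\pm}}+\mi\,\btheta\cdot\bc^{\pm}$ has the tangential coefficients reversed in sign, so its operator pencil has eigenvalue set $-\Lambda$, and one must combine this with the weight duality $\big(E_{p,\beta}^{0}\big)^{*}\simeq E_{p',-\beta}^{0}$ to recover the \emph{same} hypothesis $\sigma\notin\Lambda$; this is true, but it is a computation rather than an assertion.
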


In fact, a more general theorem has been proved in \cite{MazyaPlamenevskij1978}
for elliptic operators with higher order.
In applications, we usually need to verify that the kernel of $\fa(\btheta)$
is $0$-dimensional, which is still a difficult problem.
Fortunately, for our
problem \eqref{eq:angular_Laplace_reduced-1-1}--\eqref{eq:angular_bdry_conds_reduced-1-1},
we have a better theorem for $\ell=0$ and $p=2$,
proved in Reisman \cite[Lemma 3.1]{Reisman1981_elliptic}, by the energy method.
In this theorem, a sufficient condition is presented which ensures that
operator $\fa(\btheta)$ is an isomorphism.

\begin{thm}
\label{thm:angular_Laplace_nonhomo_existence}
Suppose that
\begin{equation}
-\frac{\Phi}{\omega_{*}}<\sigma<0,\qquad\text{or}\qquad0<\sigma<-\frac{\Phi}{\omega_{*}},\label{eq:weights_Reisman}
\end{equation}
and $\beta=1-\sigma$. Then the following holds{\rm :}
\begin{enumerate}
\item
If $U\in E_{2,\beta}^{2}(K)$ and satisfies problem
\eqref{eq:angular_Laplace_reduced-1-1}--\eqref{eq:angular_bdry_conds_reduced-1-1},
then
\begin{equation}
\norm{U}_{E_{2,\beta}^{2}(K)}
\leq C\Big(\norm{F}_{E_{2,\beta}^{0}(K)}+\sum_{\pm}\norm{G^{\pm}}_{E_{2,\beta}^{1-1/2}(\gamma^{\pm})}\Big);
\label{eq:estimate_angular_Laplace_nonhomo-1}
\end{equation}

\item For any $\left(F,G^{+},G^{-}\right)\in E_{2,\beta}^{0}(K)\times\prod\limits_{\pm}E_{2,\beta}^{1-1/2}(\gamma^{\pm})$,
there exists $U\in E_{2,\beta}^{2}(K)$ that solves
problem \eqref{eq:angular_Laplace_reduced-1-1}--\eqref{eq:angular_bdry_conds_reduced-1-1}.
\end{enumerate}
\end{thm}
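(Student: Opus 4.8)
The plan is to prove both parts by the energy method, with \eqref{eq:weights_Reisman} entering exactly once, as the condition guaranteeing coercivity of the underlying quadratic form. Following Step~1, I would pass to the strip: with $\sigma=1-\beta$, put $v=r^{-\sigma}U$ and $t=\ln r$, so that by the computation yielding \eqref{eq:strip_Laplace_weighted}--\eqref{eq:strip_bdry_weighted} the problem \eqref{eq:angular_Laplace_reduced-1-1}--\eqref{eq:angular_bdry_conds_reduced-1-1} becomes, on the strip $\fs$,
\[
-\big(\partial_{tt}v+\partial_{\omega\omega}v+2\sigma\,\partial_{t}v+\sigma^{2}v\big)+\me^{2t}v=f,
\qquad
\mp\partial_{\omega}v+\alpha^{\pm}\partial_{t}v+\sigma\alpha^{\pm}v+\mathrm{i}\,\me^{t}(\btheta\cdot\bc^{\pm})\,v=g^{\pm},
\]
with $f=\me^{(2-\sigma)t}F$ and $g^{\pm}=\me^{(1-\sigma)t}G^{\pm}$; the new terms relative to the homogeneous operator are the coercive-at-$+\infty$ term $\me^{2t}v$ and the purely imaginary boundary term. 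First I would test the equation against $\overline{v}$ over $\fs$ and integrate by parts. The tangential cross-terms $-2\sigma\,\Re\int_{\fs}\partial_{t}v\,\overline{v}$ and $\alpha^{\pm}\Re\int_{\fb\gamma^{\pm}}\partial_{t}v\,\overline{v}$ are total $t$-derivatives whose endpoint contributions at $t\to\pm\infty$ vanish --- since, with $\sigma+\beta=1$, one has $r^{2\beta}|U|^{2}=\me^{2t}|v|^{2}$ and $r^{2\beta-4}|U|^{2}=\me^{-2t}|v|^{2}$, so membership of $U$ in the $E_{2,\beta}^{2}$-class forces $\me^{t}v$ and $\me^{-t}v$ to lie in $L^{2}$ --- and the imaginary boundary term drops out of the real part; substituting the boundary condition for $\partial_{\omega}v|_{\fb\gamma^{\pm}}$ then gives the energy identity
\[
\int_{\fs}\big(|\partial_{t}v|^{2}+|\partial_{\omega}v|^{2}\big)+\int_{\fs}\me^{2t}|v|^{2}-\sigma^{2}\int_{\fs}|v|^{2}
=\sum_{\pm}\sigma\alpha^{\pm}\!\int_{\fb\gamma^{\pm}}\!\!|v|^{2}
+\Re\!\int_{\fs}f\overline{v}+\sum_{\pm}\Re\!\int_{\fb\gamma^{\pm}}\!\!g^{\pm}\overline{v}.
\]

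The main obstacle is to show the left side is coercive, i.e.\ to absorb $\sigma^{2}\int_{\fs}|v|^{2}$ together with the boundary term $\sum_{\pm}\sigma\alpha^{\pm}\int_{\fb\gamma^{\pm}}|v|^{2}$ into the Dirichlet energy. Applying for each fixed $t$ the one-dimensional Poincar\'e inequality
\[
\int_{-\omega_{*}/2}^{\omega_{*}/2}|\partial_{\omega}v|^{2}\,\dif\omega-\sum_{\pm}\sigma\alpha^{\pm}|v(t,\omega^{\pm})|^{2}\ \ge\ \mu_{1}(\sigma)\int_{-\omega_{*}/2}^{\omega_{*}/2}|v|^{2}\,\dif\omega,
\]
where $\mu_{1}(\sigma)$ is the lowest eigenvalue of $-\psi''=\mu\psi$ on $(-\tfrac{\omega_{*}}{2},\tfrac{\omega_{*}}{2})$ with the Robin-type conditions $\mp\psi'(\omega^{\pm})+\sigma\alpha^{\pm}\psi(\omega^{\pm})=0$, the problem reduces to the strict inequality $\mu_{1}(\sigma)>\sigma^{2}$. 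Now this Robin problem at $\mu=\sigma^{2}$ coincides with the eigenvalue problem \eqref{eq:strip_ode_para}--\eqref{eq:strip_bdry_para} at $\xi=0$ with $\lambda=\sigma$; hence $\mu_{1}(\sigma)=\sigma^{2}$ exactly when $\sigma\in\Lambda$, and on each component of $\Real\setminus\Lambda$ the sign of $\mu_{1}(\sigma)-\sigma^{2}$ is constant. Differentiating $\mu_{1}$ at the Neumann exponent $\sigma=0$ (constant ground state) gives $\mu_{1}'(0)=-(\alpha^{+}+\alpha^{-})/\omega_{*}$, which has the same sign as $-\Phi/\omega_{*}$ because $\arctan$ is odd and increasing; therefore $\mu_{1}(\sigma)-\sigma^{2}>0$ precisely on the component of $\Real\setminus\Lambda$ adjacent to $0$ on the side of $-\Phi/\omega_{*}$, which is exactly the interval \eqref{eq:weights_Reisman}. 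Under \eqref{eq:weights_Reisman}, combining the energy identity with this Poincar\'e inequality, the weighted trace inequality, and Young's inequality bounds the first-order part of $\norm{U}_{E_{2,\beta}^{2}(K)}$ --- that is, $\int_{K}r^{2\beta}\big(|\nabla U|^{2}+|U|^{2}\big)+\int_{K}r^{2\beta-2}|U|^{2}$ --- by the right-hand side of \eqref{eq:estimate_angular_Laplace_nonhomo-1}.

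To upgrade this to the full $E_{2,\beta}^{2}(K)$ estimate \eqref{eq:estimate_angular_Laplace_nonhomo-1}, I would use a dyadic decomposition $K=\bigcup_{j\in\Int}K_{j}$, $K_{j}=\{2^{j}<r<2^{j+1}\}\cap K$: rescaling each $K_{j}$ to the fixed model half-annulus $\{1<|z|<2\}\cap K$ --- which meets the two rays $\gamma^{\pm}$ but not the vertex, so no corner singularity is present --- applying the standard $L^{2}$ elliptic $H^{2}$-regularity estimate for the oblique-derivative problem $(-\triangle+2^{2j},\ \fp^{\pm})$ on this smooth piece, and summing the resulting local estimates against the weight $r^{2\beta}$, with the lower-order terms on the right controlled by the global first-order bound just obtained, yields \eqref{eq:estimate_angular_Laplace_nonhomo-1} and proves~(1). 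For the solvability statement~(2), I would introduce the sesquilinear form $B[U,\Phi]$ produced by the same integration by parts, so that $B[U,\Phi]=\int_{K}F\,r^{2\beta}\overline{\Phi}+\sum_{\pm}\int_{\gamma^{\pm}}G^{\pm}r^{2\beta}\overline{\Phi}$ is the weak formulation, on the weighted Hilbert space with norm $\norm{U}^{2}=\int_{K}r^{2\beta}\big(|\nabla U|^{2}+|U|^{2}\big)+\int_{K}r^{2\beta-2}|U|^{2}$; the computation above shows that, precisely under \eqref{eq:weights_Reisman}, $\Re B[U,U]\ge c\norm{U}^{2}$, while boundedness of $B$ and of the right-hand functional follow from the weighted Hardy and trace inequalities, so the Lax--Milgram theorem yields a unique weak solution $U$, which the dyadic regularity argument then places in $E_{2,\beta}^{2}(K)$, giving~(2) and uniqueness. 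Alternatively, since \eqref{eq:weights_Reisman} implies $\sigma\notin\Lambda$, Theorem~\ref{thm:angular_Laplace_nonhomo_Fredholm} already makes $\fa(\btheta)$ Fredholm; part~(1) forces its kernel to be trivial, and a homotopy in $\btheta$ to the homogeneous operator of Theorem~\ref{thm:angular_Laplace} identifies its index as $0$, so it is again an isomorphism.
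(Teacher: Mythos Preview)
The paper does not supply its own proof of this theorem; it simply attributes the result to Reisman \cite[Lemma~3.1]{Reisman1981_elliptic} and notes that it is proved ``by the energy method.'' Your proposal carries out exactly such an energy argument---reduction to the strip, testing against $\overline{v}$, and identifying \eqref{eq:weights_Reisman} as the coercivity condition $\mu_{1}(\sigma)>\sigma^{2}$ for the associated Robin problem---and is correct; this is in line with the approach of the reference the paper cites. One small remark: in your eigenvalue step you only need (and only use) the implication $\mu_{1}(\sigma)=\sigma^{2}\Rightarrow\sigma\in\Lambda$, not the biconditional as stated, and in the decay-of-boundary-terms step keep track of the Jacobian $\dif Y=\me^{2t}\dif t\,\dif\omega$ when translating $E_{2,\beta}^{2}(K)$ membership into $L^{2}(\fs)$ statements.
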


\smallskip
\subsubsection{Step 3: $L^{2}$ well-posedness for
problem \eqref{eq:dihedral_Laplace}--\eqref{eq:dihedral_boundary_conditions}.}

Now we go back to our problem \eqref{eq:dihedral_Laplace}--\eqref{eq:dihedral_boundary_conditions}.
\textcolor{black}{Applying the Fourier transform with respect to $\bx'$,
we have
\begin{align}
 & \triangle_{X}\hat{u}(X;\bta)-\bta^{2}\hat{u}(X;\bta)
   =\hat{f}(X;\bta) & \text{ in }K,  \text{ for }\bta\in\Real^{n-2},
   \label{eq:angular_Laplace_reduced}\\
 & \fp^{\pm}(D_{X};\bta)\hat{u}
  =\hat{g}^{\pm}(X;\bta) & \,\,\,\text{ on }\gamma^{\pm}, \text{ for }\bta\in\Real^{n-2},\label{eq:angular_bdry_conds_reduced}
\end{align}
where $X=(x_{1},x_{2})^{\top}$,
and $\fp^{\pm}(D_{X};\bta)\defs\partial_{\bnu^{\pm}}+\alpha^{\pm}\partial_{\btau^{\pm}}+\mi\bta\cdot\bc^{\pm}$.
We hope that, for all $\bta\in\Real^{n-2}$, problem \eqref{eq:angular_Laplace_reduced}--\eqref{eq:angular_bdry_conds_reduced}
has a unique solution $\hat{u}(X;\bta)$, so that the inverse
Fourier transform can be employed to obtain the solution for problem
\eqref{eq:dihedral_Laplace}--\eqref{eq:dihedral_boundary_conditions}.}

\textcolor{black}{If $\bta=0$, by applying Theorem \ref{thm:angular_Laplace},
problem \eqref{eq:angular_Laplace_reduced}--\eqref{eq:angular_bdry_conds_reduced}
is solvable in space $V_{2,\beta}^{2}(K)$ for $\beta=1-\sigma$
with $\sigma$ satisfying \eqref{eq:strip_weights_possible}.}

If $\bta\not=0$, by introducing the coordinate
transform:
\[
(X;\bta)\mapsto(Y,\btheta)\defs(\abs{\bta}X,\ \abs{\bta}^{-1}\bta),
\]
and defining
\[
U\left(Y;\bta\right)\defs\abs{\bta}^{2}\hat{u}(\abs{\bta}^{-1}Y;\bta),
\]
we find that problem \eqref{eq:angular_Laplace_reduced}--\eqref{eq:angular_bdry_conds_reduced}
becomes a boundary value problem with the form as
problem \eqref{eq:angular_Laplace_reduced-1-1}--\eqref{eq:angular_bdry_conds_reduced-1-1}
in Step 2:
\begin{align}
 & \triangle_{Y}U(Y;\bta)-U(Y;\bta)
  =F(Y;\bta) & \text{ in }K,  \text{ for }\bta\in\Real^{n-2},
   \label{eq:angular_Laplace_reduced-1-2}\\
 &\fp^{\pm}(D_{Y};\btheta)U(Y;\bta)
  =G^{\pm}(Y;\bta) & \,\,\,\,\,\,\,\,\text{ on }\gamma^{\pm},  \text{ for }\bta\in\Real^{n-2},
  \label{eq:angular_bdry_conds_reduced-1-2}
\end{align}
where $\fp^{\pm}(D_{Y};\btheta)\defs\partial_{\bnu^{\pm}}+\alpha^{\pm}\partial_{\btau^{\pm}}+\mi\btheta\cdot\bc^{\pm}$
with $\btheta\in S^{n-3}$, and
\[
F(Y;\bta)\defs\hat{f}(\abs{\bta}^{-1}Y;\bta),\quad G^{\pm}(Y;\bta)\defs\abs{\bta}\hat{g}^{\pm}(\abs{\bta}^{-1}Y;\bta).
\]

Thus, Theorem \ref{thm:angular_Laplace_nonhomo_existence}, well-prepared
in Step 2, can be employed to establish the existence and uniqueness, as well as
the \textit{a priori} estimates, of a solution to
problem \eqref{eq:angular_Laplace_reduced-1-2}--\eqref{eq:angular_bdry_conds_reduced-1-2}
for any parameter $\bta\not=0$. Then the inverse Fourier transform
with respect to $\bta$ leads to a solution $u$ of problem
\eqref{eq:dihedral_Laplace}--\eqref{eq:dihedral_boundary_conditions}.
We eventually obtain the following $L^{2}$ well-posedness theorem
for problem \eqref{eq:dihedral_Laplace}--\eqref{eq:dihedral_boundary_conditions}.

\begin{thm}
\label{thm:dihedral_Laplace_existence}
Suppose that $\sigma$ satisfies condition \eqref{eq:weights_Reisman}
and $\beta=1-\sigma$. Then the operator of the boundary value problem
\eqref{eq:dihedral_Laplace}--\eqref{eq:dihedral_boundary_conditions}
induces an isomorphism
\[
V_{2,\beta}^{2}(\fd)\approx
V_{2,\beta}^{0}(\fd)\times\prod\limits_{\pm}V_{2,\beta}^{1/2}(\Gamma^{\pm}).
\]
\end{thm}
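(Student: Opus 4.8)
The plan is to carry out in detail the three‑stage reduction indicated in Step~3: a partial Fourier transform in the edge variable $\bx'$ turns problem \eqref{eq:dihedral_Laplace}--\eqref{eq:dihedral_boundary_conditions} on the dihedral angle $\fd=K\times\Real^{n-2}$ into a $\bta$‑parametrized family of oblique‑derivative problems on the two‑dimensional angular domain $K$; each member of the family is solved by the angular‑domain results already established; and a solution on $\fd$ is recovered by Fourier synthesis together with a Plancherel‑type identity between the weighted Sobolev norms. First I would record that the hypothesis \eqref{eq:weights_Reisman} forces $\sigma\notin\Lambda$, i.e.\ \eqref{eq:strip_weights_possible}: since $\Phi=\arctan\alpha^{-}+\arctan\alpha^{+}\in(-\pi,\pi)$, the open interval in \eqref{eq:weights_Reisman} contains none of the points $0$ and $(m\pi-\Phi)/\omega_{*}$, $m\in\Int$. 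Applying the transform $\bx'\mapsto\bta$ to \eqref{eq:dihedral_Laplace}--\eqref{eq:dihedral_boundary_conditions} yields, for each $\bta\in\Real^{n-2}$, problem \eqref{eq:angular_Laplace_reduced}--\eqref{eq:angular_bdry_conds_reduced} in $K$; for $\bta=0$ the lower‑order term and the $\mathrm{i}\bta\cdot\bc^{\pm}$ coefficients drop out, so this is the homogeneous problem \eqref{eq:angular_Laplace_reduced-1-1_homo}--\eqref{eq:angular_bdry_conds_reduced-1-1_homo}, uniquely solvable in $V_{2,\beta}^{2}(K)$ by Theorem~\ref{thm:angular_Laplace} with $\ell=0$, $p=2$, $\beta=1-\sigma$, and $\{\bta=0\}$ is a null set so this fibre is irrelevant to the synthesis.

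For $\bta\neq0$ I would use the anisotropic rescaling $(X;\bta)\mapsto(Y,\btheta)\defs(\abs{\bta}X,\abs{\bta}^{-1}\bta)$ and $U(Y;\bta)\defs\abs{\bta}^{2}\hat u(\abs{\bta}^{-1}Y;\bta)$, which converts \eqref{eq:angular_Laplace_reduced}--\eqref{eq:angular_bdry_conds_reduced} into the nonhomogeneous problem \eqref{eq:angular_Laplace_reduced-1-2}--\eqref{eq:angular_bdry_conds_reduced-1-2} for $\fa(\btheta)=(-\triangle_{Y}+1,\ \fp^{\pm}(D_{Y};\btheta))$, $\btheta\in S^{n-3}$. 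Theorem~\ref{thm:angular_Laplace_nonhomo_existence} then gives a unique $U(\cdot\,;\bta)\in E_{2,\beta}^{2}(K)$ and the bound \eqref{eq:estimate_angular_Laplace_nonhomo-1}, whose constant depends only on $\sigma,\omega_{*},\alpha^{\pm},\bc^{\pm}$ and hence is uniform in $\btheta$ by compactness of $S^{n-3}$. Undoing the rescaling, $\hat u(X;\bta)=\abs{\bta}^{-2}U(\abs{\bta}X;\bta)$, and the change of variables $Y=\abs{\bta}X$ turns \eqref{eq:estimate_angular_Laplace_nonhomo-1} into an inequality of the shape
\[
\sum_{\abs{\kk}\le2}\int_{K}\big(1+\abs{\bta}^{2(2-\abs{\kk})}\big)\,r_{X}^{2(\beta-2+\abs{\kk})}\,\big|D_{X}^{\kk}\hat u(X;\bta)\big|^{2}\,\dif X\ \le\ C\Big(\ \text{rescaled data terms}\ \Big),
\]
with $r_{X}=\sqrt{x_{1}^{2}+x_{2}^{2}}$; the two‑scale factor $1+r^{p(\abs{\kk}-\ell)}$ built into the $E$‑norm is exactly what produces, after rescaling, the two families of $\abs{\bta}$‑powers appearing here.

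The final stage is Fourier synthesis. The inverse transform of $\bta\mapsto\hat u(\cdot\,;\bta)$ is a candidate $u$ on $\fd$, and since $D_{\bx'}$ becomes multiplication by $\bta$ while the weight $r^{2(\beta-2+\abs{\kk})}$ depends only on $(x_{1},x_{2})$, Plancherel's theorem identifies $\norm{u}_{V_{2,\beta}^{2}(\fd)}^{2}$ with a constant times $\int_{\Real^{n-2}}$ of the left‑hand side above; integrating the rescaled estimate in $\bta$, and checking the companion Plancherel identities for $\norm{f}_{V_{2,\beta}^{0}(\fd)}$ and for the trace norms $\norm{g^{\pm}}_{V_{2,\beta}^{1/2}(\Gamma^{\pm})}$, yields $\norm{u}_{V_{2,\beta}^{2}(\fd)}\le C\big(\norm{f}_{V_{2,\beta}^{0}(\fd)}+\sum_{\pm}\norm{g^{\pm}}_{V_{2,\beta}^{1/2}(\Gamma^{\pm})}\big)$. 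Fibrewise uniqueness plus injectivity of the Fourier transform gives uniqueness on $\fd$, fibrewise solvability gives surjectivity, and with the two‑sided estimate these show that the operator of \eqref{eq:dihedral_Laplace}--\eqref{eq:dihedral_boundary_conditions} is an isomorphism $V_{2,\beta}^{2}(\fd)\approx V_{2,\beta}^{0}(\fd)\times\prod_{\pm}V_{2,\beta}^{1/2}(\Gamma^{\pm})$.

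The step I expect to be the real work is this last synthesis: establishing the Plancherel‑type equivalence between $V_{2,\beta}^{2}(\fd)$ and the $\bta$‑integral of the rescaled $E_{2,\beta}^{2}(K)$ norms, together with the matching statements for the data and, above all, for the fractional‑order trace spaces on $\Gamma^{\pm}$. The delicate point is that the radial weight $r$ lives only in the two plane variables $(x_{1},x_{2})$ whereas $\bta$ is dual to the complementary variables, so although $r$ commutes with the transform, the interplay of the $\abs{\bta}$‑rescaling with the weight exponents $\beta-2+\abs{\kk}$ must be tracked consistently over all $\abs{\kk}\le2$ and for the traces; by comparison, the uniformity in $\btheta\in S^{n-3}$ in Theorem~\ref{thm:angular_Laplace_nonhomo_existence} and the exclusion of the null fibre $\bta=0$ are routine.
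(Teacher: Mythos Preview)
Your proposal is correct and follows essentially the same route as the paper: partial Fourier transform in $\bx'$, rescaling to reduce the $\bta\neq0$ fibres to the fixed operator $\fa(\btheta)$ on $K$, invocation of Theorem~\ref{thm:angular_Laplace_nonhomo_existence}, and Plancherel synthesis via the identity between $\norm{u}_{V_{2,\beta}^{2}(\fd)}^{2}$ and $\int_{\Real^{n-2}}\abs{\bta}^{-2(\beta+1)}\norm{U}_{E_{2,\beta}^{2}(K)}^{2}\,\dif\bta$. One tactical simplification in the paper that you do not make: it first reduces to the case $g^{\pm}=0$ by extending the boundary data, so that only the estimate \eqref{eq:estimate_boundary_homo} with homogeneous boundary conditions needs to be established; this sidesteps exactly the fractional trace-norm Plancherel identity you flag as the delicate point.
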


\begin{proof}
It suffices to prove the unique solvability of
problem \eqref{eq:dihedral_Laplace}--\eqref{eq:dihedral_boundary_conditions}
and to obtain the following
estimate for homogeneous boundary conditions:
\begin{equation}
\norm{u}_{V_{2,\beta}^{2}(\fd)}\leq C\norm{f}_{V_{2,\beta}^{0}(\fd)}.
\label{eq:estimate_boundary_homo}
\end{equation}

Under the assumption that $\sigma$ satisfies condition \eqref{eq:weights_Reisman},
by applying Theorem \ref{thm:angular_Laplace_nonhomo_existence},
there exists a unique solution $U(Y;\bta)\in E_{2,\beta}^{2}(K)$
for any $\bta\not=0$. Then
\[
u(\bx)=u(X,\bx')=\ff_{\bta\rightarrow\bx'}^{-1}\big(\abs{\bta}^{-2}U(Y;\bta)\big)
\]
is the solution to problem \eqref{eq:dihedral_Laplace}--\eqref{eq:dihedral_boundary_conditions}.
Noting that\textcolor{black}{
\begin{eqnarray*}
&&\norm{u}_{V_{2,\beta}^{2}(\fd)}=\int_{\Real^{n-2}}\abs{\bta}^{-2(\beta+1)}\norm{U}_{E_{2,\beta}^{2}(K)}^{2}\dif\bta,\\
&&\norm{f}_{V_{2,\beta}^{0}(\fd)} =\int_{\Real^{n-2}}\abs{\bta}^{-2(\beta+1)}\norm{F}_{E_{2,\beta}^{0}(K)}^{2}\dif\bta,
\end{eqnarray*}
we obtain estimate \eqref{eq:estimate_boundary_homo},
which completes the proof.}
\end{proof}

\subsubsection{Step 4: Schauder estimates for
problem \eqref{eq:dihedral_Laplace}--\eqref{eq:dihedral_boundary_conditions}.}

In this step, we present the weighted H\"older estimates, which have been established
in \cite{MazyaPlamenevskij1978-SBJ,MazyaPlamenevskij1978_Schauder}.
The theorem below in this step is a direct consequence of these theorems
for oblique derivative boundary value problems of the Poisson equation.

With the $L^{2}$ well-posedness established in Step 3,
the a \textit{priori} Schauder estimates
for solution $u$ have also been established in
\cite{MazyaPlamenevskij1978-SBJ,MazyaPlamenevskij1978_Schauder},
by employing Green's function and its delicate estimates.
The Schauder estimates imply the well-posedness of
problem \eqref{eq:dihedral_Laplace}--\eqref{eq:dihedral_boundary_conditions}
in the weighted H\"older spaces ({\it cf.} \cite{MazyaPlamenevskij1978-SBJ,MazyaPlamenevskij1978_Schauder}
for detail calculations). As a direct consequence by using condition \eqref{eq:weights_Reisman}
in Theorem \ref{thm:angular_Laplace_nonhomo_existence},
we have the following theorem:

\begin{thm}
\label{thm:dihedral_Laplacian_Schauder_weighted}
Let $\alpha\in\left(0,1\right)$, $\underline{\sigma}=\min(0,-\frac{\Phi}{\omega_{*}})$,
and $\overline{\sigma}=\max(0,-\frac{\Phi}{\omega_{*}})$.
Suppose that $\sigma_{*}\in\left(\underline{\sigma},\overline{\sigma}\right)$,
$\kappa=1-\sigma_{*}$,
$\sigma_{j}\in\left(\underline{\sigma},\overline{\sigma}\right)$
and $\beta_{j}=2+\alpha-\sigma_{j}$, $j=1,2$.
Then, for any $\ell=0,1,\cdots,$
if
\[
f\in\C_{\ell+\beta_{1},\ell+\beta_{2}}^{\ell,\alpha}(\fd),\qquad g^{\pm}\in\C_{\ell+\beta_{1},\ell+\beta_{2}}^{\ell+1,\alpha}\left(\Gamma^{\pm}\right),
\]
solution $u\in V_{2,\kappa}^{2}(\fd)$ of
problem \eqref{eq:dihedral_Laplace}--\eqref{eq:dihedral_boundary_conditions}
is also in $\C_{\ell+\beta_{1},\ell+\beta_{2}}^{\ell+2,\alpha}(\fd)$ and satisfies
the following estimate:
\[
\norm{u}_{(\ell+2,\alpha;\fd)}^{(\ell+\beta_{1},\ell+\beta_{2})}
\leq C\Big(\norm{f}_{(\ell,\alpha;\fd)}^{(\ell+\beta_{1},\ell+\beta_{2})}
+\sum_{\pm}\norm{g^{\pm}}_{(\ell+1,\alpha;\Gamma_{j})}^{(\ell+\beta_{1},\ell+\beta_{2})}\Big).
\]
\end{thm}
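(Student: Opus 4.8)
The plan is to obtain this weighted H\"older well-posedness by combining the $L^{2}$ theory of Step~3 (Theorem~\ref{thm:dihedral_Laplace_existence}) with the weighted Schauder estimates of Maz'ya--Plamenevskij \cite{MazyaPlamenevskij1978-SBJ,MazyaPlamenevskij1978_Schauder} for the oblique derivative problem of the Poisson equation in a dihedral angle, and then to pass from a single weight to the double weight $(\ell+\beta_{1},\ell+\beta_{2})$. The structural fact that makes every step go through is that, by the eigenvalue list \eqref{eq:strip_spectrum_set}, the \emph{open} strip $\underline{\sigma}<\Re\lambda<\overline{\sigma}$, with $\underline{\sigma}=\min(0,-\Phi/\omega_{*})$ and $\overline{\sigma}=\max(0,-\Phi/\omega_{*})$, contains no eigenvalue of problem \eqref{eq:strip_ode_para}--\eqref{eq:strip_bdry_para}; in particular $\sigma_{*},\sigma_{1},\sigma_{2}$ all satisfy condition \eqref{eq:weights_Reisman}.

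\textbf{From the $L^{2}$ solution to a single-weight H\"older estimate.} Since $\sigma_{*}$ satisfies \eqref{eq:weights_Reisman}, Theorem~\ref{thm:dihedral_Laplace_existence} provides the solution $u\in V_{2,\kappa}^{2}(\fd)$, $\kappa=1-\sigma_{*}$, of \eqref{eq:dihedral_Laplace}--\eqref{eq:dihedral_boundary_conditions}; concretely $u$ is given by the Green's operator built in Step~3 from the Fourier reduction in $\bx'$ and Reisman's $E_{2,\beta}^{2}$--solvability in $K$, i.e.\ by convolution of $(f,g^{\pm})$ against a Green's function and boundary Poisson kernels. I would then quote the pointwise weighted estimates for these kernels proved in \cite{MazyaPlamenevskij1978-SBJ,MazyaPlamenevskij1978_Schauder}: for any admissible weight $\sigma_{j}\in(\underline{\sigma},\overline{\sigma})$, if $f\in\C_{\ell+\beta_{j}}^{\ell,\alpha}(\fd)$ and $g^{\pm}\in\C_{\ell+\beta_{j}}^{\ell+1,\alpha}(\Gamma^{\pm})$, then that same $u$ lies in $\C_{\ell+\beta_{j}}^{\ell+2,\alpha}(\fd)$ with
\[
\norm{u}_{(\ell+2,\alpha;\fd)}^{(\ell+\beta_{j})}\leq C\Big(\norm{f}_{(\ell,\alpha;\fd)}^{(\ell+\beta_{j})}+\sum_{\pm}\norm{g^{\pm}}_{(\ell+1,\alpha;\Gamma^{\pm})}^{(\ell+\beta_{j})}\Big).
\]
The mechanism I would sketch behind these kernel bounds is the standard one: classical interior and oblique-boundary Schauder estimates for $\triangle$ away from the edge $\fe$, and near $\fe$ a dyadic decomposition in $r$, rescaling of each annular shell to unit size, the flat Schauder estimate there, and reassembly with the weight $r^{\ell+\beta_{j}}$ --- the absence of an eigenvalue on $\Re\lambda=\sigma_{j}$ being exactly what excludes a resonant homogeneous mode at the edge and yields a constant uniform in $r$.

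\textbf{Double weight and general $\ell$.} Since the data lie in $\C_{\ell+\beta_{1},\ell+\beta_{2}}^{\ell,\alpha}(\fd)=\C_{\ell+\beta_{1}}^{\ell,\alpha}(\fd)\cap\C_{\ell+\beta_{2}}^{\ell,\alpha}(\fd)$ (and similarly on $\Gamma^{\pm}$), the previous step applies with $j=1$ and $j=2$ to the same solution $u$ --- the Green's operator does not depend on which admissible weight is used, again because $(\underline{\sigma},\overline{\sigma})$ is eigenvalue-free --- so $u\in\C_{\ell+\beta_{1}}^{\ell+2,\alpha}(\fd)\cap\C_{\ell+\beta_{2}}^{\ell+2,\alpha}(\fd)=\C_{\ell+\beta_{1},\ell+\beta_{2}}^{\ell+2,\alpha}(\fd)$, and adding the two single-weight bounds gives the asserted double-weight estimate. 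For $\ell\geq1$ one reduces to $\ell=0$: derivatives along $\fe$ and in $\bx'$ commute with the constant-coefficient operators $\triangle$ and $\partial\fp^{\pm}$, so they solve a problem of the same form with data of one order lower, and the transverse second derivatives are recovered from the equation and the boundary conditions; the weight index shifts by exactly the number of derivatives taken, consistently with $\ell+\beta_{j}=\ell+2+\alpha-\sigma_{j}$. The higher-order case is in any event already contained in the cited Maz'ya--Plamenevskij theorems.

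\textbf{Main obstacle.} The genuinely delicate ingredient is the single-weight step: the sharp pointwise weighted estimates for the Green's function and Poisson kernels near the edge of the dihedral angle, together with identifying the admissible weight range. I would borrow these from \cite{MazyaPlamenevskij1978-SBJ,MazyaPlamenevskij1978_Schauder} rather than reprove them; the remaining work is to match their weight conventions to those of \S4 and carry out the dyadic reassembly. It is worth emphasizing that the whole argument stays within the single eigenvalue-free interval $(\underline{\sigma},\overline{\sigma})$ bracketed by the eigenvalues $0$ and $-\Phi/\omega_{*}$ --- precisely condition \eqref{eq:weights_Reisman} --- and that this limitation is the dihedral-angle counterpart of the feature forcing the M-D stability result of this paper to hold only for weak transonic shocks.
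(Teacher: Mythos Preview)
Your proposal is correct and follows essentially the same route as the paper: the paper also obtains Theorem~\ref{thm:dihedral_Laplacian_Schauder_weighted} by combining the $L^{2}$ well-posedness of Step~3 with the Green's-function--based weighted Schauder estimates of Maz'ya--Plamenevskij \cite{MazyaPlamenevskij1978-SBJ,MazyaPlamenevskij1978_Schauder}, under condition \eqref{eq:weights_Reisman}. In fact the paper gives no further argument beyond citing those references, so your sketch of the dyadic/rescaling mechanism and the passage to double weights is more detailed than what the paper itself provides.
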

Theorem \ref{thm:bvp_wellposedness_Laplace} is a special
case of Theorem \ref{thm:dihedral_Laplacian_Schauder_weighted} with
$\ell=0$.

\medskip
\noindent
{\bf Acknowlegements}.
The research of Gui-Qiang Chen was supported in part by
the UK
Engineering and Physical Sciences Research Council Award
EP/E035027/1 and EP/L015811/1,
the NSFC under a joint project Grant 10728101,
and the Royal Society - Wolfson Research Merit Award (UK).
The research
of Beixiang Fang was supported in part by Natural Science Foundation
of China under Grant Nos.  11031001, 11371250, and 11631008, Shanghai Jiao Tong
University's Chenxing SMC-B Project, the Shanghai Committee of Science
and Technology (Grant No. 15XD1502300),  and the joint research project
“Nonlinear Partial Differential Equations -- Theories and Applications”
between Shanghai Jiao Tong University and the University of Oxford. This
work was initiated during Beixiang Fang's visit to
the Oxford Center for Nonlinear
Partial Differential Equations from March 2012 to February 2013,
which is supported by the State Scholarship Fund of China Scholarship
Council under File No. 2011831005.

\end{document}